\documentclass[preprint,12pt]{elsarticle}
\usepackage[margin=3cm]{geometry}
\usepackage[english]{babel}

\usepackage{amssymb}
\usepackage{doi}
\usepackage{url}
\usepackage{amsmath}
\usepackage{amsthm}
\usepackage{tikz}
\usepackage{float}

\usepackage{quiver}
\usetikzlibrary{angles,calc,quotes,babel,arrows.meta,decorations.markings,plotmarks}

\usepackage{euscript}

\newtheorem{thm}[subsection]{Theorem}
\newtheorem{cor}[subsection]{Corollary}
\newtheorem{lem}[subsection]{Lemma}
\newtheorem{prop}[subsection]{Proposition}

\newtheorem{fact}[subsection]{Fact}
\newtheorem{ex}[subsection]{Example}

\newtheorem{rem}[subsection]{Remark}

\theoremstyle{definition}
\newtheorem{defn}[subsection]{Definition}

\newcommand{\nc}{\newcommand}
\newcommand{\rnc}{\renewcommand}

\rnc{\lim}{\mathsf{lim}}
\nc{\inv}{\mathsf{inv}}
\newcommand\mapsfrom{\mathrel{\reflectbox{\ensuremath{\mapsto}}}}
\nc{\mono}{\hookrightarrow}
\nc{\epi}{\twoheadrightarrow}

\nc{\ext}{\bigwedge\nolimits}

\nc{\BaseRing}{\mathsf{k}}

\nc{\Grp}{\mathsf{Grp}}
\nc{\Ab}{\mathsf{Ab}}
\nc{\Set}{\mathsf{Set}}
\nc{\Pres}{\mathsf{Pres}}
\nc{\Ring}{\mathsf{Ring}}
\nc{\Alg}{\mathsf{Alg}}
\nc{\kAlg}{\Alg(\BaseRing)}
\nc{\Mod}{\mathsf{Mod}}
\nc{\kMod}{\Mod(\BaseRing)}
\nc{\C}{\EuScript{C}}
\nc{\D}{\EuScript{D}}
\nc{\A}{\EuScript{A}}
\nc{\B}{\EuScript{B}}
\nc{\FMod}{\mathsf{FunMod}}
\rnc{\O}{\EuScript{O}}
\nc{\BB}{\text{\sf\reflectbox{B}}}
\nc{\F}{\mathcal{F}}
\nc{\U}{U}

\nc{\Zy}{\EuScript{Z}}
\nc{\Bd}{\EuScript{B}}

\nc{\g}{\mathbf{g}}
\nc{\ab}{\mathrm{ab}}
\nc{\ZF}{{\mathbb{Z}[F]}}
\nc{\ZG}{{\mathbb{Z}[G]}}
\nc{\Z}{\mathbb{Z}}
\nc{\Q}{\mathbb{Q}}
\nc{\f}{\mathbf{f}}
\nc{\h}{\mathbf{h}}
\rnc{\r}{\mathbf{r}}
\nc{\tensorsquare}{\widetilde{{\otimes}}^2}
\nc{{\y}}{{\bf y}}
\nc{{\x}}{{\bf x}}
\rnc{\ker}{\mathsf{ker}}
\nc{\coker}{\mathsf{coker}}
\nc{\Fun}{\mathsf{Fun}}
\nc{\eq}{\mathsf{eq}}
\nc{\im}{\mathsf{im}}
\nc{\proj}{\mathrm{proj}}
\nc{\inj}{\mathrm{inj}}

\nc{\RR}{\mathbb{R}}

\nc{\const}{\mathsf{const}}
\nc{\dom}{\mathsf{dom}}
\nc{\cod}{\mathsf{cod}}
\nc{\Mor}{\mathsf{Mor}}

\rnc{\a}{\mathbf{a}}
\rnc{\b}{\mathbf{b}}
\nc{\s}{\mathbf{s}}
\nc{\Tor}{\mathsf{Tor}}

\nc{\Lie}{\mathcal{L}}
\nc{\Ex}{\Lambda}
\nc{\id}{\mathrm{id}}

\makeatletter
\def\ps@pprintTitle{%
  \let\@oddhead\@empty
  \let\@evenhead\@empty
  \def\@oddfoot{\reset@font\hfil\thepage\hfil}
  \let\@evenfoot\@oddfoot
}
\makeatother

\makeatletter
\newcommand{\smallbullet}{}%
\DeclareRobustCommand\smallbullet{%
  {\mathord{\mathpalette\smallbullet@{0.5}}}%
}
\newcommand{\smallbullet@}[2]{%
  {\vcenter{\hbox{\scalebox{#2}{$\m@th#1\bullet$}}}}%
}
\makeatother

\journal{-}

\begin{document}

\begin{frontmatter}

\title{Functorial languages in homological algebra and lower central series}

\author{
Nikita Golub, Vasily Ionin, Lev Mukoseev
}

\begin{abstract} There is a general phenomenon in algebra that numerous functors of homological significance admit characterization as derived limits of elementary functors defined over categories of free extensions.
We demonstrate that upon restriction to appropriate subcategories of the category of groups, one may express analogously more interesting functors, including homology groups with cyclic coefficients.

Moreover, we are laying the foundations of the so-called \(\f\r_\infty\)-language, extending the \(\f\r\)-language of Roman Mikhailov and Sergei O. Ivanov. This language is constructed by augmenting the \(\f\r\)-language through the introduction of an infinite family of letters \(\r_m\) corresponding to the lower central series \(\gamma_m(R)\) of the group of relations and leads to some neat computations.
\end{abstract}

\begin{keyword}
Group homology, higher limits, relation modules, fr-codes
\end{keyword}
\end{frontmatter}

\tableofcontents

\section{Introduction}

In a series of works \cite{emmanouil_group_2008, mikhailov_limits_2010, ivanov_higher_2015, fr_codes_2017}, the authors initiated the development of what can be called \textit{a functorial group theory}.
Let \(\C\) be a category of some sort of algebraic objects (e.g. groups, associative algebras, Lie algebras\ldots), and let \(A \in \C\) be some fixed object. The category of free presentations of \(A\), denoted \(\Pres(A)\), naturally arises in this context and leads to several fundamental questions.

Of particular importance is the investigation of which invariants of \(A\) can be obtained through the study of functors on the category \(\Pres(A)\). It is known for a functor \(\mathcal{F} \colon \Pres(A) \to \Set\) that \(\lim \, \mathcal{F} = \inv \, \mathcal{F}\), where \(\inv\) denotes a maximal constant subfunctor. For example, for a group \(G\) and a free extension \(R \mono F \epi G\) there is a Hopf's formula
\[H_2(G) \cong \frac{R \cap [F, F]}{[R, F]}.\]
One may verify \(H_2(G)\) is the maximal constant subfunctor of \(R/[R, F]\) considered as a functor \(\Pres(G) \to \Grp\). Namely, one has
\begin{equation}
\label{eq:hopf_functorial}
\lim \, R/[R, F] = H_2(G).
\end{equation}

The generalization of this formula to higher homology groups presents significant technical challenges. These obstacles can be resolved, however, by replacing the consideration of functorial subgroups of a free group \(F\) with the study of functorial ideals in the corresponding group ring \(\Z[F]\). Namely, for every free extension \(R \mono F \epi G\) there are two-sided ideals \(\r \subset \f \subset \Z[F]\) given by
\begin{align*}
\r &= (R - 1)\Z[F], \\
\f &= (F - 1)\Z[F],
\end{align*}
and one may show that
\begin{alignat*}{2}
H_{2n}(G) &= \frac{\r^n \cap \f \r^{n-1} \f}{\r^n \f + \f \r^n} = \lim \, \frac{\f}{\r^n \f + \f \r^n}, &\quad& n \ge 2, \\
H_{2n+1}(G) &= \frac{\f\r^k \cap \r^k \f}{\r^{n+1} + \f\r^n\f} = \lim \, \frac{\f}{\r^{n+1} + \f\r^n\f}, &\quad& n \ge 0.
\end{alignat*}
From the basic homological algebra we recall that the functor of limit
\[\lim \colon \Fun(\Pres(G), \Ab) \to \Ab\]
is left exact. It is standard that the aforementioned functor category admits sufficient injective objects, thereby guaranteeing the existence of right derived functors for computing limits. Following standard conventions, we denote those functors by
\[\lim^i \colon \Fun(\Pres(G), \Ab) \to \Ab, \quad i \ge 0,\]
reffering to them as \textit{higher limits}.
One may readily verify that for a functorial ideal \(\a \subset \Z[F]\) one has \(\lim \, \f/\a = \lim^1 \, \a\).
The utilization of higher limits consequently enables us to focus our analysis on subideals of the functorial enveloping ring \(\Z[F]\). One may form arbitrary sums and intersections of monomials in letters \(\f\), \(\r\) --- such as \(\r\f \cap \f\r\) or \(\r^{n+1} + \f\r^n\) --- and regard them as functors \(\Pres(G) \to \Ab\). We call such functors \textit{\(\f\r\)-codes}, and the fundamental problem is to calculate their higher limits.

As initially demonstrated in \cite{fr_codes_2017, standard_complex_2020}, a plethora of intriguing phenomena emerge already in this case.
Many non-trivial functors can be obtained in this way. For example,
\[\lim^1 \, \r\f\r + \f\r\f = \coker\{H_3(G) \otimes G_\ab \to H_2(G; \g \otimes_{\Z[G]} \g)\},\]
where \(\g = \Delta(G) \subset \Z[G]\) is a fundamental ideal of a group \(G\), and
\[\lim^2 \, \r\f\r + \f\r\r = (\g \otimes_{\Z[G]} \g) \otimes \g.\]
\textit{A functorial linguistics} is a study of functors obtained from basic `alphabetic' set of elementary functors (\textit{letters}) via some universal operations (such as sums, products and intersections). We think of higher limits of such functors as of the translation of the corresponding codes. Typically, this is a study of languages with very simple syntax but complex semantics.

It is important to emphasize that certain invariants still elude description within the \(\f\r\)-language paradigm. Apparently, the translations of \(\f\r\)-codes are limited to include homology only with integral coefficients. In the present work, we extend the language with an infinite family of new letters
\[\f \supset \r = \r_1 \supset \r_2 \supset \r_3 \supset \ldots,\]
given by the lower central series of the group of relations $R$:
\[\r_m = (\gamma_m(R) - 1)\Z[F].\]
We call this extension \(\f\r_\infty\)-language.
We are able to give a functorial description of some homology groups with finite coefficients in terms of higher limits of some sentences in \(\f\r_\infty\)-language.

We wish to highlight an alternative perspective on functorial linguistics. Specifically, numerous computations in combinatorial group theory inherently depend upon the selection of a free group presentation; see \cite{gruenberg_1960, stohr_homology_1993, hartley_homology_1991, hartley_note_1991, kuzmin_fnn_1987, kovach_homology_1992}. The application of higher limits limit enables the extraction of invariant data. Moreover, it helps to streamline the exposition: it turns out that many formulas may be written down more systematically in a uniform way. While certain computations of higher derived limits can be readily obtained from classical results through elementary diagram chasing, others require substantially more sophisticated techniques - an investment we contend is justified given their theoretical significance. 

\subsection{Review of main results}

In \cite[Theorem~4.17]{fr_codes_2017}, the authors establish a formula for computing higher derived limits of coinvariant modules arising from tensor powers of relation modules. In more detail, they prove
\[\lim^i \, R_\ab^{\otimes n} \otimes_{\Z[G]} M = H_{2n-i}(G; M), \quad 0 \le i < n,\]
for any group \(G\) and a \(G\)-module \(M\). We derive analogous formulas for the case of exterior powers in place of tensor powers; see Corollary~\ref{cor:basic_lim_ext_rel}, Proposition~\ref{prop:lim_i_ex_rel_modulo_torsion} and Proposition~\ref{prop:lim_ex_mod_p}.
It turns out that when \(\Ex^n R_\ab\) is used, torsion comes into play. For example, in Corollary~\ref{cor:basic_lim_ext_rel}, we prove for groups without torsion up to \(n\) and \(\Z\)-free \(\ZG\)-module the formula
\[\lim^i \, \Ex^n R_\ab \otimes_{\Z[G]} M = H_{n-i}(G; S^n(\g) \otimes M), \quad i = 0, 1,\]
where \(S^n(\g)\) is an \(n\)-th symmetric power of the augmentation ideal \(\g\) of the integral group ring \(\ZG\).

Next, we present our specific computations of some (higher) limits. They predominantly stem from classical results of Kuz'min, Kovach, St\"or, Hartley.

First, we compute the limits of the functors of the form \(H_n(F/\gamma_m(R))\). For example, for \(m = 2\) in Proposition~\ref{prop:hn_localized_away_2_kuzmin} we establish that there is an isomorphism over \(\Z[1/2]\)
\[\lim \, H_n(F/\gamma_2(R)) \otimes \Z[1/2] = \bigoplus_p f^{(p)}_n H_n(G; \Z/p\Z),\]
for a group \(G\) without \(n!\)-torsion. Here, \(f^{(p)}_n\) denotes the so-called Kuz'min polynomial; see \S~\ref{subsec:kuzmin} for details.

Second, we generalize the Hopf-type formula \eqref{eq:hopf_functorial} by proving an isomorphism
\[\lim \, \frac{[\overbrace{R, R, \ldots, R}^p]}{[\underbrace{R, R, \ldots, R}_p, F]} = H_4(G; \Z/p)\]
for a prime \(p\) and a group \(G\) without \(p\)-torsion; see Corollary~\ref{cor:lim_h2_lie}~(1).
This was known for \(p = 2\) and \(p = 3\); see \cite{ivanov_higher_2015}.

Finally, we provide the calculations for the aforementioned \(\f\r_\infty\)-language. To accomplish this, we establish several fundamental properties of higher limits (see e.g. Theorem~\ref{thm:lim_free_module}, Proposition~\ref{prop:lim_of_product}).
We compute the basic codes in the localized version of \(\f\r_\infty\)-language. See Section~\ref{sec:fr_inf} for an exposition of related results. We present selected calculations in explicit form.
\begin{itemize}
\item For a group \(G\) without \(p\)-torsion, we obtain
\[\lim^1 \, \r_p \f + \f \r_p = H_4(G; \Z/p).\]
\item For a group \(G\) without \(2\)-torsion, we obtain
\[\lim^1 \, \r_4 \f + \f \r_4 = H_6(G; \Z/p).\]
\item More interesting formulas can be obtained if we localize coefficients. For example, we prove that after localizing in \((p)\) for some odd \(p\), there is a formula
\[\lim^1_{\Z_{(p)}} \, \r_2^{(p^2+1)/2} + \f\r_2^{(p^2-1)/2}\f = H_{p^2+2}(G;\Z/p) \oplus H_{p^2+2p}(G;\Z/p)\]
for every \(p!\)-torsionless group.
\end{itemize}

\medskip
\textbf{Acknowledgment.}
We would like to thank Vladimir Sosnilo and Roman Mikhailov for their helpful suggestions.

The study of N.~Golub was carried out with the financial support of the Ministry of Science and Higher Education of the Russian Federation in the framework of a scientific project under agreement \texttt{075-15-2025-013}.

The work of V.~Ionin was supported by the Ministry of Science and Higher Education of the Russian Federation (agreement \texttt{075-15-2025-344} dated 29/04/2025 for Saint Petersburg Leonhard Euler International Mathematical Institute at PDMI RAS) and by the Theoretical Physics and Mathematics Advancement Foundation ``BASIS'', project no.~\texttt{24-7-1-26-3}.

\section{Preliminaries}

We devote this section to technical results and facts that we use extensively throughout the paper. Given the technical nature of this development, readers primarily concerned with applications may proceed directly to Section~\ref{sec:shifting}.

In this section \(\BaseRing\) denotes a commutative ring.

\subsection{Notable functors and sequences}

We follow the notations from \cite{stohr_homology_1993, hartley_homology_1991}.
Let \(M\) be a left \(\BaseRing\)-module.
\begin{enumerate}
\item (\textit{Tensor power}) For every \(n \ge 0\) there is a \(\BaseRing\)-module
\[T^n M = \underbrace{M \otimes_\BaseRing \ldots \otimes_\BaseRing M}_n.\]
The direct sum \(TM = \bigoplus_{n \ge 0} T^n M\) is an algebra in an evident way.
\item (\textit{Symmetric power}) For every \(n \ge 0\) there is a \(\BaseRing\)-module
\[S^n M = T^n M / \{(\ldots \otimes x_i \otimes \ldots\otimes  x_j \otimes \ldots) - (\ldots \otimes x_j \otimes \ldots \otimes x_i \otimes \ldots)\}.\]
If \(M\) is free with ordered basis \(\{x_i\}_{i \in I}\), then \(S^n M\) is free with the basis
\[\{x_{i_1} \circ \ldots \circ x_{i_n} \mid x_{i_1} \le \ldots \le x_{i_n}\}.\]
The composition
\[S^n M \xrightarrow{i} T^n M \epi S^n M\]
is the multiplication by \(n!\), where
\[i(x_{i_1} \circ \ldots \circ x_{i_n}) = \sum_{\sigma \in S_n} x_{i_{\sigma(1)}} \otimes \ldots \otimes x_{i_{\sigma(n)}}.\]
\item (\textit{Exterior power}) For every \(n \ge 0\) there is a \(\BaseRing\)-module
\[\Ex^n M = T^n M / \{(\ldots \otimes x_i \otimes \ldots\otimes  x_j \otimes \ldots) + (\ldots \otimes x_j \otimes \ldots \otimes x_i \otimes \ldots)\}.\]
If \(M\) is free with ordered basis \(\{x_i\}_{i \in I}\), then \(S^n M\) is free with the basis
\[\{x_{i_1} \land \ldots \land x_{i_n} \mid x_{i_1} < \ldots < x_{i_n}\}.\]
The composition
\[\Ex^n M \xrightarrow{i} T^n M \epi \Ex^n M\]
is the multiplication by \(n!\), where
\[i(x_{i_1} \land \ldots \land x_{i_n}) = \sum_{\sigma \in S_n} (-1)^\sigma x_{i_{\sigma(1)}} \otimes \ldots \otimes x_{i_{\sigma(n)}}.\]
\item (\textit{Divided power}) For every \(n \ge 0\) there is a \(\BaseRing\)-module \(\Gamma^n M\). It can be defined for a free module \(M\) as the submodule of \(S_n\)-invariants in the tensor power \(T^n M\).
\item (\textit{Lie power}) For every \(n \ge 0\) there is a \(\BaseRing\)-module \(\Lie^n M\) as the submodule of \(T^n M\) given as the intersection \(T^n M \cap \langle M \rangle_{\mathrm{Lie}}\), where \(\langle M \rangle_{\mathrm{Lie}}\) denotes a Lie subalgebra of \(TM^{\mathrm{Lie}}\)  generated by \(T^1 M = M\).
The direct sum
\[\Lie M = \bigoplus_{n \ge 0} \Lie^n M\]
is a graded Lie algebra over \(\BaseRing\).
\end{enumerate}

Given a group \(G\) there is a graded Lie ring
\[\bigoplus_{n \ge 1} \frac{\gamma_n(G)}{\gamma_{n+1}(G)}\]
with the bracket given via the commutator in \(G\).
Using the universal property of free Lie rings one defines a map 
\begin{equation}\label{eq:lie_of_group}
\Lie G_\ab \to \bigoplus_{n \ge 1} \frac{\gamma_n(G)}{\gamma_{n+1}(G)}.
\end{equation}

The following theorem is well-known.
\begin{fact}[{Magnus--Witt theorem; see e.g. \cite[Theorem~1]{ellis_2002}}]\label{fact:magnus_witt}
If \(G = F\) is a free group, then the map~\eqref{eq:lie_of_group} is an isomorphism.
\end{fact}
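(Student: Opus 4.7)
The plan is to exploit the Magnus embedding. Fix a free basis \(\{x_i\}_{i \in I}\) of \(F\) and consider the noncommutative power series ring \(\mathcal{A} = \Z\langle\langle X_i \mid i \in I\rangle\rangle\) with augmentation ideal \(\mathfrak{m} = (X_i)_{i \in I}\). The Magnus map \(\mu \colon F \to \mathcal{A}^\times\), defined by \(\mu(x_i) = 1 + X_i\), is a group homomorphism. Under the canonical identification \(\mathfrak{m}^n / \mathfrak{m}^{n+1} \cong T^n V\), where \(V\) is the free \(\Z\)-module on \(\{X_i\}_{i \in I}\), one has \(V \cong F_\ab\) via \(X_i \leftrightarrow \bar x_i\).

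The first step is to verify by induction on \(n\) that \(\mu(\gamma_n(F)) \subset 1 + \mathfrak{m}^n\). The inductive step rests on the identity
\[[1+a,\,1+b] - 1 = (ab - ba)(1+a)^{-1}(1+b)^{-1}\]
valid in \(\mathcal{A}\), which for \(a \in \mathfrak{m}^k\) and \(b \in \mathfrak{m}^l\) lies in \(\mathfrak{m}^{k+l}\) and moreover reduces to \(ab - ba\) modulo \(\mathfrak{m}^{k+l+1}\). Consequently, for each \(n \ge 1\) we obtain a group homomorphism
\[\phi_n \colon \gamma_n(F)/\gamma_{n+1}(F) \to T^n V, \qquad g \mapsto \mu(g) - 1 \bmod \mathfrak{m}^{n+1},\]
and the above congruence tells us that \(\phi = \bigoplus_n \phi_n\) is a homomorphism of graded Lie rings, where \(TV\) carries the commutator bracket \([a,b] = ab - ba\).

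The key observation is that the composite
\[\Lie F_\ab \longrightarrow \bigoplus_{n \ge 1} \gamma_n(F)/\gamma_{n+1}(F) \xrightarrow{\phi} TV\]
coincides with the canonical inclusion \(\Lie V \mono TV\) realizing \(\Lie V\) as the free Lie subalgebra generated by \(V = T^1 V\). Indeed, both composites are Lie ring homomorphisms from \(\Lie F_\ab \cong \Lie V\) to \(TV\) that agree on the generators \(\bar x_i \mapsto X_i\), hence they coincide by the universal property of the free Lie algebra. Classically (via Poincar\'e--Birkhoff--Witt, or the Lyndon basis construction), the inclusion \(\Lie V \mono TV\) is injective, whence the map~\eqref{eq:lie_of_group} is injective as well.

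For surjectivity one invokes the standard fact that \(\gamma_n(F)/\gamma_{n+1}(F)\) is generated as an abelian group by iterated commutators in the free generators \(\bar x_i\), which lie manifestly in the image of \(\Lie F_\ab\). The main obstacle in the argument is the classical injectivity \(\Lie V \mono TV\); everything else is essentially bookkeeping with the Magnus embedding and elementary commutator identities.
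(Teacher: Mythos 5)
The paper does not supply a proof of this statement: it is recorded as a \emph{Fact} with a citation to Ellis, so there is no in-paper argument against which to compare. Judged on its own, your proposal is a correct and complete proof, and indeed it is essentially Magnus's original argument (the one Witt then used to compute the ranks), so it is an entirely standard route to the theorem. The key steps — the congruence $[1+a,1+b]-1 \equiv ab-ba \bmod \mathfrak{m}^{k+l+1}$ for $a\in\mathfrak{m}^k$, $b\in\mathfrak{m}^l$, the resulting graded Lie homomorphism $\phi$ to $TV$, the identification of the composite $\Lie F_\ab \to \bigoplus_n \gamma_n(F)/\gamma_{n+1}(F) \to TV$ with the canonical inclusion, and surjectivity from the fact that $\gamma_n(F)/\gamma_{n+1}(F)$ is generated by weight-$n$ commutators in the free generators — are all correct.

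One small calibration of where the classical input actually sits. With the paper's definition, $\Lie V$ \emph{is} a Lie subring of $TV$, so the injectivity of $\Lie V \hookrightarrow TV$ that you invoke via PBW/Lyndon bases is tautological. What you actually use is uniqueness of Lie homomorphisms out of $\Lie V$ agreeing on $V$, and this needs only that $V$ Lie-generates $\Lie V$ (again true by definition), not PBW. The place where a basis theorem genuinely enters is in the \emph{construction} of the map~\eqref{eq:lie_of_group} itself — one needs $\Lie F_\ab \subset T F_\ab$ to enjoy the universal mapping property of the free Lie ring on $F_\ab$, which for $F_\ab$ a free $\Z$-module is exactly PBW over $\Z$. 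The paper sweeps this under ``using the universal property of free Lie rings,'' so you may take it as given; but it is worth noting that your appeal to PBW is aimed at the (trivial) injectivity step rather than at the step where it is actually doing work.
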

Further, as it is done in \cite{stohr_homology_1993, hartley_homology_1991}, we shall identify free Lie powers \(\Lie^n F_\ab\) with the quotients \(\gamma_n(F)/\gamma_{n+1}(F)\).

The functors \(\Ex^n, S^n\) and \(\Gamma^n\) are connected via the Koszul sequences.
\begin{fact}[{Koszul sequences; see e.g.~\cite[(1.4) and (1.5)]{breen_functorial_1999}}]
Let \(A \mono B \epi C\) be a short exact sequence of flat \(\BaseRing\)-modules. Then for any \(n \ge 1\) there are long exact sequences
\begin{align*}
&\Ex^n A \mono \Ex^{n-1} A \otimes_\BaseRing B \to \Ex^{n-2} A \otimes_\BaseRing S^2 B \to \cdots \to S^{n-1} B \otimes_\BaseRing A \to S^n B \epi S^n C, \\    
&\Gamma^n A \mono \Gamma^{n-1} A \otimes_\BaseRing B \to \Gamma^{n-2} A \otimes_\BaseRing \Ex^2 B \to \cdots \to \Ex^{n-1} B \otimes_\BaseRing A \to \Ex^n B \epi \Ex^n C.
\end{align*}
\end{fact}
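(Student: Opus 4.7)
The plan is to realize both sequences as the two standard (mutually dual) Koszul complexes associated to the injection $\iota\colon A \mono B$, verify $d^2 = 0$ by the usual Koszul identity, and then reduce the exactness statement to the case of a split short exact sequence of finitely generated free modules.

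First, I would write down the differentials explicitly. For the first sequence, the generic map $\Ex^{n-k} A \otimes_\BaseRing S^k B \to \Ex^{n-k-1} A \otimes_\BaseRing S^{k+1} B$ sends
\[
a_1 \wedge \cdots \wedge a_{n-k} \otimes s \;\longmapsto\; \sum_{i=1}^{n-k} (-1)^{i+1}\, a_1 \wedge \cdots \wedge \widehat{a_i} \wedge \cdots \wedge a_{n-k} \otimes \iota(a_i)\cdot s,
\]
and the terminal arrow $S^n B \epi S^n C$ is induced by $B \epi C$; the analogous recipe (swapping $\Ex$ and $S$, and using the product in $\Ex B$ paired with the coproduct/deconcatenation on the divided-power algebra $\Gamma A$) defines the second complex. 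That $d^2 = 0$ is the classical Koszul antisymmetry identity, a direct calculation I would not belabor.

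For exactness, I would first treat the case where $A$ and $C$ are free of finite rank and $B = A \oplus C$. Then $S^n B = \bigoplus_{i+j=n} S^i A \otimes_\BaseRing S^j C$, and the entire complex splits as a direct sum, indexed by the $C$-degree $j$, of shifted copies of the classical Koszul complex
\[
\Ex^{n-j} A \to \Ex^{n-j-1} A \otimes_\BaseRing A \to \cdots \to A \otimes_\BaseRing S^{n-j-1} A \to S^{n-j} A,
\]
each tensored with $S^j C$; for free $A$ this is the standard Koszul resolution, exact in the middle, and the $j = n$ summand provides exactly the augmentation onto $S^n C$. The divided-power versus exterior version is handled identically using $\Ex^n B = \bigoplus \Ex^i A \otimes_\BaseRing \Ex^j C$.

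To pass from the split free case to the general flat case, I would express the given short exact sequence of flat modules as a filtered colimit of split short exact sequences of finitely generated free modules and then use that $\otimes_\BaseRing$, $\Ex^n$, $S^n$ and $\Gamma^n$ all commute with filtered colimits, while filtered colimits preserve exactness. The main obstacle is assembling this colimit presentation: Lazard's theorem writes $B$ as a filtered colimit of finite free modules, but lifting this to a presentation of the whole sequence crucially uses the flatness of $C$ (equivalently, the pure exactness of $A \mono B$). An alternative I would probably prefer is to check exactness stalkwise after localization at each prime of $\BaseRing$, where finitely presented flat modules become free and the split-free argument applies directly; one then concludes by the local-global principle for exactness of complexes of $\BaseRing$-modules.
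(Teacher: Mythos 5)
The paper does not prove this statement; it records it as a known Fact and cites Breen, so there is no internal proof to compare against. Your sketch (realize the two sequences as the Koszul complexes attached to $\iota\colon A\mono B$, verify $d^2=0$, check the split free case by the weight decomposition indexed by the $C$-degree $j$, then pass to the general flat case) is the standard strategy, and your treatment of the split free case is correct. The gap is in the reduction to that case, and the route you say you would prefer is the one that does not work.

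Localizing at primes does not eliminate the flatness hypothesis: over a local ring a flat module is free only under an additional finiteness assumption (finitely presented, or finitely generated over a Noetherian local ring). The statement carries no such hypothesis, so after localization you are still facing honestly flat, non-free modules, and the split-free computation does not apply. The colimit route, on the other hand, can be made rigorous, but not by applying Lazard to $B$. The order of operations matters: first write $C=\colim_\beta C_\beta$ with $C_\beta$ finite free, pull back to get $B_\beta=B\times_C C_\beta$; since $C_\beta$ is projective, each $A\mono B_\beta\epi C_\beta$ splits, and the original sequence is the filtered colimit of the split sequences $A\mono A\oplus C_\beta\epi C_\beta$ with $C_\beta$ free but $A$ still only flat. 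Your weight decomposition then reduces exactness to that of the degree-$m$ Koszul complexes for $\id_A$, and a second application of Lazard, now to $A$ alone, handles the remaining flatness. Since $\Ex^n$, $S^n$, $\Gamma^n$ and $\otimes$ all commute with filtered colimits and filtered colimits of $\BaseRing$-modules are exact, this closes the argument; you should make this two-stage reduction explicit rather than gesture at purity, and drop the localization alternative.
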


\begin{prop}\label{prop:tor_free_group}
Let \(F\) be a free group, \(M\) be a left \(\BaseRing[F]\)-module, and \(N\) be a right \(\BaseRing[F]\)-module.
Then there are short exact sequences
\[\Tor^\BaseRing_n(M, N)_F \mono \Tor^{\BaseRing[F]}_n(M, N) \epi H_1(F; \Tor^\BaseRing_{n-1}(M, N)), \quad n \ge 1.\]
\end{prop}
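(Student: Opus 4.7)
The plan is to realize the two Tors simultaneously from a single double complex and then exploit the fact that $F$ has cohomological dimension one over $\BaseRing$.

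First I would choose a free resolution $P_\bullet \to M$ by left $\BaseRing[F]$-modules. Since $\BaseRing[F]$ is free as a $\BaseRing$-module, each $P_i$ is also free over $\BaseRing$, so the complex $C_\bullet := N \otimes_\BaseRing P_\bullet$ makes sense and computes $\Tor^\BaseRing_*(N,M)$. I would equip $C_\bullet$ with the diagonal $\BaseRing[F]$-action $f\cdot(n\otimes p)=nf^{-1}\otimes fp$. The key observation at this stage, which I would state as a small lemma, is that for any free $\BaseRing[F]$-module $P$ the diagonally-acted module $N \otimes_\BaseRing P$ is again a free $\BaseRing[F]$-module; one sees this by choosing a $\BaseRing[F]$-basis $\{e_\alpha\}$ of $P$ and checking that the map $N\otimes_\BaseRing P \to N_{\text{triv}}\otimes_\BaseRing P$ sending $n\otimes(e_\alpha g)$ to $ng^{-1}\otimes e_\alpha g$ is an $\BaseRing[F]$-isomorphism. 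In particular every $C_i$ is $F$-acyclic.

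Next, I would identify the coinvariants: a direct check shows $(N\otimes_\BaseRing P_i)_F = N\otimes_{\BaseRing[F]} P_i$, so the coinvariant complex $(C_\bullet)_F$ is precisely $N\otimes_{\BaseRing[F]}P_\bullet$, and its homology is $\Tor^{\BaseRing[F]}_*(N,M)$. Combined with the $F$-acyclicity of each $C_i$, this means that the hyperhomology $\mathbb{H}_*(F;C_\bullet)$ is computed on one hand by $(C_\bullet)_F$ and yields $\Tor^{\BaseRing[F]}_*(N,M)$.

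On the other hand I would run the other hyperhomology spectral sequence
\[
E^2_{p,q} \;=\; H_p\bigl(F;\,H_q(C_\bullet)\bigr)\;=\;H_p\bigl(F;\,\Tor^\BaseRing_q(N,M)\bigr)\;\Longrightarrow\;\Tor^{\BaseRing[F]}_{p+q}(N,M).
\]
Because $F$ is free, it has a length-one free resolution of $\BaseRing$ over $\BaseRing[F]$, so $H_p(F;-)=0$ for $p\ge 2$. The spectral sequence therefore collapses to the short exact sequences
\[
0 \to \Tor^\BaseRing_n(N,M)_F \to \Tor^{\BaseRing[F]}_n(N,M) \to H_1\bigl(F;\,\Tor^\BaseRing_{n-1}(N,M)\bigr) \to 0,
\]
which is the claimed statement (up to the symmetry $\Tor^{\BaseRing[F]}_n(M,N)=\Tor^{\BaseRing[F]}_n(N,M)$).

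The only genuinely delicate step is the freeness lemma for $N\otimes_\BaseRing \BaseRing[F]$ under the diagonal action, as everything else is the formalism of the two hyperhomology spectral sequences; after that the short exact sequence is essentially forced by $\mathrm{cd}_\BaseRing(F)\le 1$.
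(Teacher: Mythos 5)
Your proposal is correct and takes essentially the same route as the paper: both rest on the Grothendieck spectral sequence for the composite $(-)_F \circ (- \otimes_\BaseRing M)$, which you realize explicitly as the two hyperhomology spectral sequences of the double complex $Q_\bullet \otimes_{\BaseRing[F]} C_\bullet$, together with $\mathrm{cd}_{\BaseRing[F]}(\BaseRing) \le 1$. One small wording fix: your untwisting isomorphism identifies the diagonally-acted $N \otimes_\BaseRing P$ with the induced module $N_{\mathrm{triv}} \otimes_\BaseRing P$, which is free over $\BaseRing[F]$ only when $N$ is $\BaseRing$-free, but induced modules are always $F$-acyclic, and acyclicity is all you actually use.
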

\begin{proof}
Let \(M\) be a left \(\BaseRing[F]\)-module. Consider the composition of functors
\[\Mod(\BaseRing[F]) \xrightarrow{- \otimes M} \Mod(\BaseRing[F]) \xrightarrow{(-)_F} \kMod.\]
Then for every right \(\BaseRing[F]\)-module \(N\) there is the first quadrant homology spectral sequence of composition
\[\Tor^{\BaseRing[F]}_p(\BaseRing, \Tor^\BaseRing_q(M, N)) \simeq H_p(F; \Tor^\BaseRing_q(M, N)) \Rightarrow \Tor^{\BaseRing[F]}_{p+q}(M, N).\]
Since the free group \(F\) has a homological dimension one, we conclude the result.
\end{proof}

\subsection{Functorial modules}

Let \(\Mod\) denotes the category of pairs \((R, M)\), where \(R\) is a (possibly noncommutative) ring and \(M\) is a left \(R\)-module. Morphisms are pairs \((f, \varphi) \colon (R, M) \to (S, N)\), where \(f \colon R \to S\) is a ring homomorphims and \(\varphi \colon M \to N\) is a \(R\)-linear map (the abelian group \(N\) is a \(R\)-module with an action defined as \(rn = f(r)n\)). The category of right modules is defined analogously. There is a natural projection functor \(\Mod \to \Ring\).
Let \(\C\) be a small category and \(\O \colon \C \to \Ring\) be a functorial ring. Then the \textit{left \(\O\)-module \(\mathcal{F} \colon \C \to \Mod\)} is a functor, such that the following diagram commutes:
\[\begin{tikzcd}[cramped]
	&& \Mod \\
	\\
	\C && {\Ring.}
	\arrow[from=1-3, to=3-3]
	\arrow["{\mathcal{F}}", from=3-1, to=1-3]
	\arrow["\O"', from=3-1, to=3-3]
\end{tikzcd}\]
Let \(\FMod(\O)\) be a full subcategory of the functor category \(\Fun(\C, \Mod)\) spanned by all left \(\O\)-modules.
\begin{prop}
For every functorial ring \(\O\), the category \(\FMod(\O)\) is a Grothendieck category.
\end{prop}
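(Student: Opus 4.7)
The plan is to verify the three defining properties of a Grothendieck category: that $\FMod(\O)$ is abelian, satisfies AB5 (filtered colimits are exact), and has a generator. The guiding principle is that although $\Mod$ (the category of pairs $(R,M)$) has somewhat awkward limits and colimits, fixing a functorial ring $\O$ pins down the ring at each stage, so all constructions can be performed pointwise in the ordinary Grothendieck categories $\Mod(\O(c))$ and then assembled functorially.

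First, I would set up the abelian structure. Given a morphism $\eta \colon \F \to \G$ in $\FMod(\O)$, define $(\ker \eta)(c) = \ker(\eta_c \colon \F(c) \to \G(c))$, an $\O(c)$-submodule of $\F(c)$; functoriality of $\ker \eta$ follows because, for any $\alpha \colon c \to c'$ in $\C$, the map $\F(\alpha) \colon \F(c) \to \F(c')$ is $\O(c)$-linear through $\O(\alpha)$ and commutes with $\eta$. Cokernels and finite (co)limits are analogous. The same recipe gives all small (co)limits pointwise, using that each $\Mod(\O(c))$ is cocomplete. Once (co)limits are pointwise, AB5 is immediate: filtered colimits commute with finite limits in each $\Mod(\O(c))$, hence the same holds in $\FMod(\O)$.

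The main task is therefore producing a generator, and here I would adapt the Yoneda construction. For each object $c \in \C$, define
\[
\O_c \colon \C \to \Mod, \qquad \O_c(c') = \bigoplus_{\alpha \in \Hom_\C(c, c')} \O(c') \cdot e_\alpha,
\]
the free left $\O(c')$-module on the set $\Hom_\C(c, c')$, with transition maps $\O_c(\beta)(r \cdot e_\alpha) = \O(\beta)(r) \cdot e_{\beta \circ \alpha}$. A direct check shows $\O_c \in \FMod(\O)$. A Yoneda-style argument then gives a natural bijection
\[
\Hom_{\FMod(\O)}(\O_c, \F) \;\cong\; \F(c),
\]
sending a morphism $\xi$ to $\xi_c(1 \cdot e_{\id_c})$; the inverse sends $x \in \F(c)$ to the morphism whose component at $c'$ takes $r \cdot e_\alpha$ to $r \cdot \F(\alpha)(x)$. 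Since $\C$ is small, the object
\[
U = \bigoplus_{c \in \Ob(\C)} \O_c
\]
exists in $\FMod(\O)$, and the Yoneda formula shows that morphisms out of $U$ separate morphisms in $\FMod(\O)$: if $\eta \colon \F \to \G$ is nonzero, then some component $\eta_c$ is nonzero, so some $x \in \F(c)$ has $\eta_c(x) \neq 0$, and the corresponding morphism $\O_c \to \F$ detects this. Hence $U$ is a generator.

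The main subtlety is the verification that the pointwise operations really land back in $\FMod(\O)$ and that the Yoneda calculation respects the $\O$-action along morphisms of $\C$ (as opposed to pointwise over a fixed ring); both reduce to carefully unpacking the definition of a morphism in $\Mod$, where the linearity condition $\varphi(rm) = f(r)\varphi(m)$ interacts with the functorial ring structure $\O(\alpha) \colon \O(c) \to \O(c')$. Once this bookkeeping is in place, the three Grothendieck axioms follow directly from the pointwise arguments above.
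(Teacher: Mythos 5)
Your proof is correct, but it proceeds along a genuinely different route from the paper's. You verify the three Grothendieck axioms directly: kernels, cokernels and all small (co)limits are computed pointwise in the ordinary Grothendieck categories $\Mod(\O(c))$, AB5 is inherited pointwise, and a generator is produced as $U=\bigoplus_{c}\O_c$, where $\O_c$ is a ``free on $\mathrm{Hom}_\C(c,-)$'' functorial module satisfying the Yoneda formula $\mathrm{Hom}_{\FMod(\O)}(\O_c,\F)\cong\F(c)$. The paper instead builds a non-unital category ring $\O[\C]$ whose module category is Grothendieck, exhibits $\FMod(\O)$ as a reflective subcategory of $\Mod(\O[\C])$ via the $\F\mapsto\bigoplus_c\F(c)$ / $M\mapsto(c\mapsto\chi(\id_c)M)$ adjunction, and notes the reflector preserves finite limits, invoking the general fact that a Giraud-type reflective subcategory of a Grothendieck category is again Grothendieck. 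Your argument is more elementary and self-contained and yields an explicit generator; the paper's is shorter once the category-ring device is set up and makes the relationship between $\FMod(\O)$ and an ordinary module category transparent. One point you should make explicit at the outset: the pointwise constructions land in $\Mod(\O(c))$ only because the morphisms of $\FMod(\O)$ are taken to be natural transformations lying over $\id_\O$ --- that is, $\FMod(\O)$ is the fiber of the bifibration $\Mod^\C\to\Ring^\C$ over $\O$, not a literal full subcategory of $\Fun(\C,\Mod)$ (which would permit nontrivial ring endomorphisms in each component and would not even be preadditive). You gesture at this in your final paragraph, but it is precisely the hinge on which the entire pointwise strategy turns, so it deserves to be stated up front.
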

\begin{proof}
Consider a ring without a unit \(\O[\C]\) defined as follows. Its elements are the functions \(u \colon \Mor(\C) \to \sqcup_{c \in \C} \O(c)\) such that \(u(\varphi) \in \O(\cod \, \varphi)\) for every \(\varphi \in \Mor(\C)\) and \(u(\varphi)\) is non-zero only for finitely many \(\varphi\)'s.
The sum and the product in \(\O[\C]\) is defined by the formulas
\begin{align*}
(u + v)(\varphi) &= u(\varphi) + v(\varphi), \\
(u \cdot v)(\varphi) &= \sum_{\varphi = \beta \circ \alpha} \O(\beta)(u(\alpha)) \cdot v(\beta).
\end{align*}
For every \(\varphi_0 \in \Mor(\C)\) there is a `characteristic element' \(\chi(\varphi_0) \in \O[\C]\):
\[\chi(\varphi_0)(\varphi) = \begin{cases}
1, & \varphi = \varphi_0, \\
0, & \varphi \ne \varphi_0.
\end{cases}\]
Given a \(\O\)-module \(\F\), one may construct a \(\O[\C]\)-module \(M = \oplus_{c \in \C} \F(c)\) with the scalar action
\(u \cdot (x_c)_{c \in \C} = (\sum_{\varphi \colon c' \to c} u(\varphi) \cdot \mathcal{F}(\varphi)(x_{c'}))_{c \in \C}\). On the other hand, for every \(\O[\C]\)-module \(M\) there is an obvious functorial \(\O\)-module \(\F\) given by \(\F(c) = \chi(\id_c) \cdot M\) and \(\F(\varphi)\colon m \mapsto \chi(\varphi) m\).

It is straightforward to verify that the correspondence above establishes an adjoint pair, which realizes \(\FMod(\O)\) as a reflective subcategory of the Grothendieck category \(\Mod(\O[\C])\). The reflector \(M \mapsto \F\) clearly preserves finite limits, and the result follows.
\end{proof}

It is important to have some convenient framework that controls the behaviour of functorial modules under changing the base ring. The correspondence \(R \rightsquigarrow \Mod(R)\) depends on the \(R\) pseudofunctorially. The modern way to handle pseudofunctors is by means of Grothendieck (op)fibrations. The main idea is that instead of considering a pseudofunctor \(\B \to \mathsf{Cat}\), we can study an ordinary functor \(\F \colon \A \to \B\) such that the fibers \(\F^{-1}(b)\) depend pseudofunctorially on \(b \in \B\).

Recall that a functor \(\F \colon \A \to \B\) is a \textit{fibration} if for every \(a \in \A\) and \(f \colon b \to \F(a)\) there exists a cartesian lift of \(f\) with codomain \(a\):
\[\begin{tikzcd}[cramped]
	\smallbullet \\
	& {f^*a} && a && \A \\
	\smallbullet \\
	& b && {\F(a)} && \B
	\arrow["{{\exists!\,\widetilde w}}"{description}, dashed, from=1-1, to=2-2]
	\arrow["{{\forall\, g}}", curve={height=-12pt}, from=1-1, to=2-4]
	\arrow[dotted, maps to, from=1-1, to=3-1]
	\arrow["{{\text{cartesian}}}"', from=2-2, to=2-4]
	\arrow[dotted, maps to, from=2-2, to=4-2]
	\arrow[dotted, maps to, from=2-4, to=4-4]
	\arrow["{\F}", from=2-6, to=4-6]
	\arrow["{\forall \, w}"{description}, from=3-1, to=4-2]
	\arrow[curve={height=-12pt}, from=3-1, to=4-4]
	\arrow["f"', from=4-2, to=4-4]
\end{tikzcd}\]
A functor \(\F\) is an \textit{opfibration} if \(\F^\mathrm{op}\) is a fibration.
\begin{ex}
A projection \(\Mod \to \Ring\) is a bifibration. Let \(f \colon R \to S\) be a ring homomoprism. The fibration structure is given by the restriction \(f^* \colon \Mod(S) \to \Mod(R)\) and the opfibration structure is given by the extension \((- \otimes_R S) \colon \Mod(R) \to \Mod(S)\).
\end{ex}
\begin{ex}
For every bifibration \(\A \to \B\) and index category \(\C\) the post-composition functor \(\A^\C \to \B^\C\) is a bifibration as well; see \cite[Lemma~4.7]{props_2023}. In particular, there is a bifibration \(\Mod^\C \to \Ring^\C\) such that the fiber over a functorial ring \(\O\colon \C \to \Ring\) is the Grothendieck category \(\FMod(\O)\).
\end{ex}

If \(R\) is some ring, and \(\O \equiv R\) is a constant functorial ring on some index category \(\C\), then there is an equivalence \(\FMod(\O) \simeq \Fun(\C, \Mod(R))\).
In this case, there is an obvious adjunction
\[\begin{tikzcd}[cramped]
	{\Fun(\C, \Mod(R))} && {\Mod(R),}
	\arrow[""{name=0, anchor=center, inner sep=0}, "\lim", shift left=2, from=1-1, to=1-3]
	\arrow[""{name=1, anchor=center, inner sep=0}, "\const", shift left=2, from=1-3, to=1-1]
	\arrow["\dashv"{anchor=center, rotate=90}, draw=none, from=1, to=0]
\end{tikzcd}\]
and for \(i \ge 0\) we can define higher limits \(\lim^i \colon \Fun(\C, \Mod(R)) \to \Mod(R)\) as the right derived functors \(\RR^i \lim\), using usual construction from homological algebra.

It is more convenient to switch to the homotopical perspective on the derived functors. Let \(\Delta\) be a category of non-empty finite linear posets and non-decreasing maps. Then for every category \(\A\) there is a category \(\A^\Delta\) of cosimplicial objects in \(\A\). If \(\A\) is abelian, then there is a Dold-Kan correspondence that establishes an equivalence between \(\A^\Delta\) and the category \(\mathsf{Ch}^{\ge 0}(\A)\) of non-negatively graded cochain complexes. Under this equivalence, the cohomotopy groups \(\pi^i\) of a cosimiplicial object correspond to the cohomology groups \(H^i\) of a cochain complex.
If \(\A\) is a Grothendieck category, then there exists a combinatorial model structure on \(\A^\Delta\) with weak equivalences being quasi-isomorphisms (e.g. one can choose standard injective or projective model structure).
Then there is an injective model structure on the functor category \(\Fun(\C, \A^\Delta)\).
An adjunction \((\const \dashv \lim)\) lifts to the Quillen adjunction
\[\begin{tikzcd}[cramped]
	{\Fun(\C, \A^\Delta)_\inj} && {\A^\Delta.}
	\arrow[""{name=0, anchor=center, inner sep=0}, "\lim", shift left=2, from=1-1, to=1-3]
	\arrow[""{name=1, anchor=center, inner sep=0}, "\const", shift left=2, from=1-3, to=1-1]
	\arrow["\dashv"{anchor=center, rotate=90}, draw=none, from=1, to=0]
\end{tikzcd}\]
There is a total right derived functor \(\RR \lim \colon \Fun(\C, \A^\Delta) \to \A^\Delta\), and for a functor \(M\colon \C \to \A\) there is a natural equivalence
\(\lim^i \, M \simeq \pi^i(\RR \lim \, M)\), where \(M\) on the right is considered as a copresheaf of constant cosimplicial objects \(c \mapsto \const(M)\).
The computational difficulty stems from the fact that to compute \(\RR \lim \, M\) we have to choose some fibrant replacement for \(M\) in \(\Fun(\C, \A^\Delta)_\inj\), and the fibrations in the injective model structure are not obvious. However, when the index category \(\C\) shares some nice categorical properties, namely when it is strongly connected and have binary coproducts, we can use the so-called standard complex to bypass this difficulty \ref{standardcomplex}.

\subsection{Main example}
Let \(G\) be a group and consider a category \(\C = \Pres(G)\) of free extensions of \(G\). The objects of \(\Pres(G)\) are short exact sequences of groups \(R \mono F \epi G\) with \(F\) being free, and the morphisms are the commutative diagrams of the form
\[\begin{tikzcd}[cramped]
	{R_1} & {F_1} & G \\
	{R_2} & {F_2} & {G.}
	\arrow[hook, from=1-1, to=1-2]
	\arrow[from=1-1, to=2-1]
	\arrow[two heads, from=1-2, to=1-3]
	\arrow[from=1-2, to=2-2]
	\arrow[equals, from=1-3, to=2-3]
	\arrow[hook, from=2-1, to=2-2]
	\arrow[two heads, from=2-2, to=2-3]
\end{tikzcd}\]
For every commutative ring \(\BaseRing\) there is a functorial ring \(\O \colon \Pres(G) \to \Ring\) given by
\[\O(R \mono F \epi G) = \BaseRing[F],\]
the free group \(\BaseRing\)-algebra.
We mainly interested in the study of higher limits of functorial \(\O\)-module \(M\).

It turns out that there is a plethora of interesting phenomena occuring when we specialize to the case \(M = \a\) for some functorial ideal \(\a \subset \BaseRing[F]\) (explicitely, this means that \(\a(R \mono F \epi G)\) is some two-sided ideal of \(\BaseRing[F]\) and the structure maps of the functor \(\a\) are restricted from those of \(\O = \BaseRing[-]\)).

There are functorial ideals \(\BaseRing[-] \supset \f \supset \r\) given by
\begin{align*}
\f(R \mono F \epi G) &= \ker\{\BaseRing[F] \to k\} = (F - 1)\BaseRing[F], \\
\r(R \mono F \epi G) &= \ker\{\BaseRing[F] \to \BaseRing[G]\} = (R - 1)\BaseRing[F].
\end{align*}
That is, \(\f = \Delta_\BaseRing(F)\) and \(\f/\r = \g = \Delta_\BaseRing(G)\).

\subsection{Relation module}

Let us remind the basics on the relation modules. One may use \cite{stohr_homology_1993} for a general reference.

Let \(\BaseRing = \Z\).
Given a free extension \(R \mono F \epi G\), a \textit{relation module} associated with it may be identified with the kernel in the short exact sequence
\begin{equation}\label{eq:relation_module}
R_\ab \mono P \epi \g,
\end{equation}
where \(P = \ZG \otimes_\ZF \f\).
The sequence \eqref{eq:relation_module} is natural, that is, it is a short exact sequence in \(\Fun(\Pres(G), \Mod(\ZG))\), and is referred to as a \textit{relation sequence}. The embedding of the relation module \(R_\ab\) into a free \(\ZG\)-module \(P\) is called \textit{Magnus embedding}.

In terms of \(\f\r\)-language, the sequence \eqref{eq:relation_module} can be compactly written as an obvious extension \(\r/\r\f \mono \f/\r\f \epi \f/\r\).

\subsection{Computational toolkit}
In this section, we give the basic computational tools for higher limits: standard complex, limits spectral sequence, and K\"unneth formula.

\subsubsection{Standard complex}\label{standardcomplex}

If \(\C\) is a strongly connected category with binary coproducts, then for every \(c \in \C\) there is a cosimplicial object \(\BB \colon \Delta \to \C\) given by \(\BB(c)^n = \sqcup^{n+1} c\) with the natural structure maps.
In \cite[Theorem~2.12]{standard_complex_2020}, the authors prove that for a functor \(\F \colon \C \to \Ab\) one has a natural isomorphism
\[\lim^n \, \F = \pi^n \F \BB(c).\]
As always, the cohomotopy groups of a cosimplicial abelian group \(A\) can be computed with cohomology groups of the Moore (cochain) complex \(QA\) or the alternating sum complex \(CA\). The natural projection \(CA \epi QA\) is a homotopy equivalence.

\begin{prop}
Let \(f \colon \BaseRing \to \BaseRing'\) be a homomorphism of commutative rings. Let \(\C\) be a strongly connected category with binary coproducts.
\begin{enumerate}
\item If \(M\) is a functorial \(\BaseRing'\)-module on \(\C\), then the restriction of scalars induces a natural isomorphism
\[\lim^\smallbullet_{\BaseRing'} \, M \simeq \lim^\smallbullet_\BaseRing\, M.\]
\item If \(f\) is flat, and \(M\) is a functorial \(\BaseRing\)-module on \(\C\), then the extension of scalars induces a natural isomorphism
\[\lim^\smallbullet_{\BaseRing'} \, (M \otimes_\BaseRing \BaseRing') \simeq (\lim^\smallbullet_\BaseRing \, M) \otimes_\BaseRing \BaseRing'.\]
\end{enumerate}
\end{prop}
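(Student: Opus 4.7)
The plan is to reduce both parts to the standard complex formula $\lim^n \F \simeq \pi^n \F\BB(c)$ recalled in \S\ref{standardcomplex}. Fixing any $c \in \C$, the cohomotopy of $M\BB(c)$ can be computed as the cohomology of the alternating-sum cochain complex $C(M\BB(c))$, whose differentials are $\Z$-linear alternating sums of cofaces. Because the cofaces are induced from morphisms in $\C$ via the functor $M$, these differentials are automatically $\BaseRing$-linear (and, in the setting of part (1), even $\BaseRing'$-linear). Before starting, I would observe that the standard complex formula, stated for $\Ab$-valued functors, applies verbatim to functors valued in any Grothendieck category, and in particular in $\Mod(\BaseRing)$ and $\Mod(\BaseRing')$: the proof uses only the additive structure, the injective model structure, and the universal property of $\BB(c)$, all of which are intrinsic.

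For part (1), the restriction-of-scalars functor $f^{*}\colon \Mod(\BaseRing')\to\Mod(\BaseRing)$ is exact and underlies the identity on abelian groups. Applied termwise to $C(M\BB(c))$, it leaves the cochain complex unchanged on underlying abelian groups, so its degree-$n$ cohomology --- computed as $\lim^n_{\BaseRing'} M$ in $\Mod(\BaseRing')$ or as $\lim^n_{\BaseRing} M$ in $\Mod(\BaseRing)$ --- coincides, and the identification is natural in $M$.

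For part (2), the formation of $\BB(c)$ is intrinsic to $\C$ and independent of the coefficient ring, so
\[(M \otimes_\BaseRing \BaseRing')\BB(c)^n \;=\; M(\sqcup^{n+1}c)\otimes_\BaseRing \BaseRing' \;=\; M\BB(c)^n \otimes_\BaseRing \BaseRing',\]
and the cosimplicial structure maps on the right are obtained from those of $M\BB(c)$ by tensoring with the identity. This yields an isomorphism of cochain complexes of $\BaseRing'$-modules
\[C\bigl((M \otimes_\BaseRing \BaseRing')\BB(c)\bigr) \;\cong\; C(M\BB(c))\otimes_\BaseRing \BaseRing'.\]
Since $f$ is flat, $-\otimes_\BaseRing \BaseRing'$ is exact and therefore commutes with the formation of cohomology. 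Applying $H^n$ to both sides, and using the standard complex formula once more on the left, yields the desired natural isomorphism $\lim^n_{\BaseRing'}(M \otimes_\BaseRing \BaseRing') \simeq (\lim^n_\BaseRing M)\otimes_\BaseRing \BaseRing'$.

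There is no serious obstacle here: the argument is essentially a bookkeeping verification that exact coefficient-change functors commute with the degreewise and cohomology-wise parts of the standard complex construction. The only point that requires a moment of thought is the first paragraph's remark that the standard complex theorem, although stated in \cite{standard_complex_2020} for $\Ab$-valued functors, extends to $\Mod(\BaseRing)$-valued functors; but this is immediate because all the ingredients of its proof are formulated inside an arbitrary Grothendieck abelian category.
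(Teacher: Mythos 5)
Your proof is correct and follows the same route as the paper: compute higher limits via the standard complex, observe that restriction of scalars is exact (and extension of scalars is exact when $f$ is flat), and conclude that these functors commute with cohomology. You helpfully spell out the point --- left implicit in the paper --- that the standard-complex formula for $\Ab$-valued functors carries over verbatim to $\Mod(\BaseRing)$-valued ones, but this is a matter of presentation rather than a different argument.
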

\begin{proof}
Recall that the restriction of scalars \(f^* \colon \Mod(\BaseRing') \to \Mod(\BaseRing)\) is an exact functor, and the extension of scalars \((- \otimes_\BaseRing \BaseRing') \colon \Mod(\BaseRing) \to \Mod(\BaseRing')\) is exact precisely when \(f\) is flat.

The result follows since higher limits could be computed as cohomologies of the standard complex, and exact functors commute with cohomologies.
\end{proof}

\subsubsection{Spectral sequence}

\begin{fact}[{\cite[Corollary~3.17]{fr_codes_2017}}]
Let \(\mathcal{F}^\smallbullet\) be a bounded below cochain complex of representations of a category \(\C\) with \(\lim\)-acyclic cohomologies. Then there exists a cohomological spectral sequence
\[E_1^{p,q} = \lim^q\, \mathcal{F}^p \Rightarrow \lim\, H^n(\mathcal{F}^\smallbullet).\]
\end{fact}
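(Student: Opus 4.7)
The plan is to realize the desired sequence as a standard hypercohomology spectral sequence for $\lim$ applied to the complex $\mathcal{F}^\smallbullet$. Since $\Fun(\C, \Ab)$ is a Grothendieck category and hence has enough injectives, I can choose a Cartan--Eilenberg injective resolution $\mathcal{F}^\smallbullet \to I^{\smallbullet, \smallbullet}$: a first-quadrant bicomplex of injectives such that each column $I^{p, \smallbullet}$ is an injective resolution of $\mathcal{F}^p$, and such that the columns of cocycles and coboundaries assemble into injective resolutions of $Z^p(\mathcal{F}^\smallbullet)$, $B^p(\mathcal{F}^\smallbullet)$, and therefore of $H^p(\mathcal{F}^\smallbullet)$.

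Applying the termwise left-exact functor $\lim$ produces a bicomplex $\lim I^{\smallbullet, \smallbullet}$, whose associated total complex represents the hyperderived object $\RR\lim\,\mathcal{F}^\smallbullet$. The boundedness-below assumption on $\mathcal{F}^\smallbullet$ ensures that both spectral sequences associated with this bicomplex converge. Filtering by columns first gives
\[
{}'E_1^{p,q} = \lim{}^q \mathcal{F}^p \;\Longrightarrow\; \mathbb{H}^{p+q}(\RR\lim\,\mathcal{F}^\smallbullet),
\]
which is exactly the $E_1$ page asked for. Filtering by rows first gives
\[
{}''E_2^{p,q} = \lim{}^p H^q(\mathcal{F}^\smallbullet) \;\Longrightarrow\; \mathbb{H}^{p+q}(\RR\lim\,\mathcal{F}^\smallbullet).
\]
The hypothesis that each $H^q(\mathcal{F}^\smallbullet)$ is $\lim$-acyclic forces ${}''E_2^{p,q} = 0$ for all $p > 0$, so the second spectral sequence collapses at $E_2$ and yields $\mathbb{H}^n(\RR\lim\,\mathcal{F}^\smallbullet) \cong \lim H^n(\mathcal{F}^\smallbullet)$. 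Plugging this identification into the abutment of the first spectral sequence gives the claimed statement.

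The main obstacle is essentially bookkeeping: verifying that Cartan--Eilenberg resolutions exist in $\Fun(\C, \Ab)$ and that convergence of both spectral sequences is ensured by the boundedness-below hypothesis. The essential input is that $\Fun(\C, \Ab)$ has enough injectives, which is automatic from its Grothendieck category structure; the remainder is the classical hyperderived-functor machinery. An alternative route, in the homotopical language used earlier in the paper, is to work with a fibrant replacement $\mathcal{F}^\smallbullet \to \widetilde{\mathcal{F}}^\smallbullet$ in $\Fun(\C, \Ab^\Delta)_\inj$ (viewing cochain complexes via Dold--Kan) and to read off the same two spectral sequences from the two natural filtrations of the total complex of $\lim \widetilde{\mathcal{F}}^\smallbullet$; the $\lim$-acyclicity hypothesis again collapses one of them.
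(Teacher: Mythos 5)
Your argument is correct and is the standard hyperderived-functor proof: take a Cartan--Eilenberg resolution $\F^\smallbullet \to I^{\smallbullet,\smallbullet}$, apply $\lim$, and consider the two spectral sequences of the resulting bicomplex. The column-first filtration gives ${}'E_1^{p,q} = \lim^q\,\F^p$; the row-first filtration gives ${}''E_2^{p,q} = \lim^p H^q(\F^\smallbullet)$ (using the Cartan--Eilenberg property on cocycles/coboundaries), which collapses to the row $p=0$ by the $\lim$-acyclicity hypothesis, identifying the common abutment with $\lim\, H^n(\F^\smallbullet)$. The boundedness-below assumption ensures that each total degree of the bicomplex involves only finitely many bidegrees (since $q \ge 0$), so the filtrations are finite in each degree and both spectral sequences converge strongly; your remark on this point is the right one.

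Note that the paper does not supply a proof of this statement: it is quoted as a Fact from \cite[Corollary~3.17]{fr_codes_2017}. So there is nothing to compare your argument against in the present text, but your reconstruction is the expected one and is correct. One small stylistic remark: you could phrase the whole thing without the homotopical detour at the end, since the Cartan--Eilenberg argument is already complete; the fibrant-replacement reformulation is equivalent but adds no content here.
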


The subsequent propositions follow by a simple chasing using the above spectral sequence.
\begin{prop}\label{prop:spectral_application_1}\leavevmode
Let \(a_0 \mono a_1 \to a_2 \epi a_3\) be an exact sequence of representations. If \(n \ge 1\) is such that
\[\lim^1 \, a_0 = \lim^2 \, a_0 = \ldots = \lim^n \, a_0 = 0,\]
then there is an exact sequence
\[\lim\, a_0 \mono \lim \, a_1 \to \lim\, a_2 \to \lim\, a_3 \to \lim^1\,a_1 \to \lim^1\, a_2 \to \lim^1 \, a_3 \to \cdots \to \lim^n \, a_2.\]
\end{prop}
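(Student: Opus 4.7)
The plan is to split the four-term exact sequence at its middle image and splice the two resulting long exact sequences of derived limits.

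Set $b := \mathrm{im}(a_1 \to a_2)$. Exactness of $a_0 \mono a_1 \to a_2 \epi a_3$ yields canonical identifications $b \cong \mathrm{coker}(a_0 \to a_1) \cong \ker(a_2 \to a_3)$, hence two short exact sequences of representations
\[a_0 \mono a_1 \twoheadrightarrow b, \qquad b \mono a_2 \twoheadrightarrow a_3.\]
Applying the long exact sequence of derived limits to the first, the hypothesis $\lim^i a_0 = 0$ for $1 \le i \le n$ produces an exact triple $\lim a_0 \mono \lim a_1 \twoheadrightarrow \lim b$, isomorphisms $\lim^i a_1 \xrightarrow{\sim} \lim^i b$ for $1 \le i \le n-1$, and an injection $\lim^n a_1 \mono \lim^n b$. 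The second short exact sequence yields
\[\lim b \mono \lim a_2 \to \lim a_3 \to \lim^1 b \to \lim^1 a_2 \to \ldots \to \lim^n b \to \lim^n a_2,\]
and I would splice by substituting each $\lim^i a_1$ in for $\lim^i b$ via the above identifications, absorbing the initial piece $\lim a_0 \mono \lim a_1 \twoheadrightarrow \lim b$ at the left end.

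The splicing is routine in degrees $0$ through $n-1$. The hard part will be to replace $\lim^n b$ by $\lim^n a_1$ in the last portion, since the inclusion $\lim^n a_1 \hookrightarrow \lim^n b$ need not be surjective: its cokernel sits inside $\lim^{n+1} a_0$, which is not assumed to vanish. Concretely, to obtain a map $\lim^{n-1} a_3 \to \lim^n a_1$ making the final three terms exact, one must show that the connecting map $\partial_2 \colon \lim^{n-1} a_3 \to \lim^n b$ lands inside the subgroup $\lim^n a_1$, equivalently that the double boundary $\lim^{n-1} a_3 \to \lim^n b \to \lim^{n+1} a_0$ vanishes. To handle this I would invoke the cohomological spectral sequence of the Fact above applied to $a_\bullet$ viewed as an acyclic cochain complex concentrated in degrees $0, 1, 2, 3$: the abutment is zero since the complex is exact, and the vanishing hypothesis kills $E_2^{0, q}$ for $0 \le q \le n$. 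A chase through the $d_2$-differentials $E_2^{1, q} \to E_2^{3, q-1}$ shows they must be isomorphisms for $1 \le q \le n-1$ and at least injective for $q = n$, which encodes exactly the splicing compatibilities needed in degrees $\le n-1$ and yields the required factorization at the top, assembling the whole sequence in the stated shape.
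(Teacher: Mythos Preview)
Your splitting-and-splicing approach is exactly the elementary unpacking of the spectral-sequence chase the paper invokes, so the overall strategy matches the paper's.

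You are right to worry about the top degree: replacing $\lim^n b$ by $\lim^n a_1$ requires the boundary $\partial\colon \lim^{n-1}a_3 \to \lim^n b$ to factor through the inclusion $\lim^n a_1 \hookrightarrow \lim^n b$, equivalently that the iterated boundary $\lim^{n-1}a_3 \to \lim^n b \to \lim^{n+1}a_0$ vanish. But your proposed fix does not establish this. The differential $d_2\colon E_2^{1,n}\to E_2^{3,n-1}$ goes \emph{from} $\ker(\lim^n a_1\to\lim^n a_2)$ \emph{to} $\coker(\lim^{n-1}a_2\to\lim^{n-1}a_3)$; its injectivity only says the former embeds in the latter and provides no map in the direction you need, hence no factorization of $\partial$. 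The genuine obstruction lives in the $d_3$-differential out of $E_3^{0,n+1}$---this is the class of the double boundary landing in $\lim^{n+1}a_0$---and the hypotheses say nothing about $\lim^{n+1}a_0$. So your argument does not secure exactness at $\lim^n a_1$. What the splicing cleanly gives is exactness through $\lim^{n-1}a_3$ together with the injection $\ker(\lim^n a_1\to\lim^n a_2)\hookrightarrow\coker(\lim^{n-1}a_2\to\lim^{n-1}a_3)$; in the paper's applications of this proposition $a_0$ is a constant functor, so all $\lim^{i}a_0$ vanish and the difficulty does not arise.
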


\begin{prop}\label{prop:spectral_application_2}\leavevmode
Let \(a_0 \mono a_1 \to a_2 \epi a_3\) be an exact sequence of representations.
\begin{enumerate}
\item If \(\lim \, a_2 = 0\), then \(\lim\, a_0 = \lim\, a_1\).
\item If \(n \ge 1\) is such that
\[\lim\, a_2 = \lim^1\,a_2 = \ldots = \lim^n\,a_2 = 0,\]
then there is an exact sequence
\[\lim^1 \, a_0 \mono \lim^1 \, a_1 \to \lim \, a_3 \to \lim^2 \, a_0 \to \lim^2 \, a_1 \to \lim^1 \, a_3 \to \cdots \to \lim^n a_1.\]
\end{enumerate}
\end{prop}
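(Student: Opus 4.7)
The plan is to reduce the four-term exact sequence to two short exact sequences and splice their associated long exact sequences of higher limits using the vanishing hypotheses. Set $b$ to be the common image of $a_1 \to a_2$ and kernel of $a_2 \epi a_3$. Then we have two short exact sequences of functorial abelian groups,
\[
a_0 \mono a_1 \epi b, \qquad b \mono a_2 \epi a_3,
\]
and each one yields a long exact sequence of derived limits by the general formalism of right-derived functors in the Grothendieck category $\FMod(\O)$.

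For part (1), the LES of the second short exact sequence starts $0 \to \lim\, b \to \lim\, a_2 \to \lim\, a_3 \to \cdots$. The hypothesis $\lim\, a_2 = 0$ forces $\lim\, b = 0$. Feeding this into the LES of the first short exact sequence, which starts $0 \to \lim\, a_0 \to \lim\, a_1 \to \lim\, b$, gives $\lim\, a_0 \cong \lim\, a_1$ immediately.

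For part (2), the vanishing $\lim^i\, a_2 = 0$ for $0 \le i \le n$ applied to the LES of $b \mono a_2 \epi a_3$ yields $\lim\, b = 0$, connecting isomorphisms $\lim^i\, a_3 \cong \lim^{i+1}\, b$ for $0 \le i \le n-1$, and an injection $\lim^n\, a_3 \mono \lim^{n+1}\, b$. Now write out the LES of $a_0 \mono a_1 \epi b$,
\[
0 \to \lim^1\, a_0 \to \lim^1\, a_1 \to \lim^1\, b \to \lim^2\, a_0 \to \lim^2\, a_1 \to \lim^2\, b \to \cdots \to \lim^n\, a_1,
\]
(using $\lim\, b = 0$ at the leftmost position to identify $\lim^1\, a_0$ as the kernel) and substitute $\lim^{k+1}\, b \cong \lim^k\, a_3$ for $0 \le k \le n-1$. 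The resulting sequence is exactly the one claimed.

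There is no conceptual obstacle; the only point requiring care is the bookkeeping of indices, in particular verifying that the substitutions are valid all the way up to the position where the sequence is truncated at $\lim^n\, a_1$ (which requires only the isomorphism $\lim^{n-1}\, a_3 \cong \lim^n\, b$, available under the stated hypothesis since $n-1 \le n-1$). Alternatively, one could present the argument as reading off the differentials of the degenerate $E_1$-page spectral sequence of Fact~2.14 applied to the exact cochain complex $a_0 \to a_1 \to a_2 \to a_3$, converging to zero, but the two-step splicing above is both shorter and makes the index bookkeeping transparent.
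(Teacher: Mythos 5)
Your proof is correct. You splice the four-term exact sequence at $b = \im(a_1 \to a_2) = \ker(a_2 \epi a_3)$, getting two short exact sequences $a_0 \mono a_1 \epi b$ and $b \mono a_2 \epi a_3$, and chain together their long exact sequences of $\lim^*$. The bookkeeping is right: the vanishing of $\lim^i a_2$ for $0 \le i \le n$ gives $\lim\, b = 0$ and $\lim^i a_3 \cong \lim^{i+1} b$ for $0 \le i \le n-1$, which is exactly what is needed to rewrite the LES of $a_0 \mono a_1 \epi b$ in the stated form up to $\lim^n a_1$.

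The paper gives no written proof, only the remark that the proposition follows ``by a simple chasing using the above spectral sequence'' (the $\lim$-spectral sequence of a bounded-below complex with $\lim$-acyclic cohomology). Your two-SES splicing is the elementary unraveling of that degenerate spectral sequence applied to the acyclic complex $a_0 \to a_1 \to a_2 \to a_3$. Content-wise the two routes are the same; the splicing presentation avoids spectral-sequence machinery entirely and makes the index arithmetic transparent, which is worth something in a proof whose only delicate point is where to truncate. You anticipated the paper's alternative and correctly judged the splicing to be shorter. The only blemish is the misremembered label ``Fact~2.14''; the spectral-sequence statement in the paper carries a different number, but since you do not actually rely on it this is immaterial.
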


\subsection{Monoadditiviy and split monoadditivity}
Let \(\C\) be a strongly connected category with binary coproducts. There are notions of \textit{monoadditive} and \textit{split monoadditive} representations \(M \colon \C \to \Mod(\BaseRing)\) defined in \cite{fr_codes_2017}. These notions allow us to quickly obtain a vanishing of some limits. We recall just basic facts about it. These conceptual tools facilitate determining the vanishing conditions for certain limits. For completeness, we briefly review the fundamental properties underlying this result.
\begin{fact}[{\cite[Corollary~3.14, \S 4.2]{fr_codes_2017}}]\leavevmode
\begin{enumerate}
\item A representation \(M\) is monoadditive if and only if \(\lim \, M = 0\).
\item If \(M\) is split monoadditive, then all limits \(\lim^* \, M\) vanish.
\item For \(\C = \Pres(G)\) the representations of the form \(\f \otimes_{\BaseRing[F]} M\) and \(M \otimes_{\BaseRing[F]} \f\) are split monoadditive.
\end{enumerate}
\end{fact}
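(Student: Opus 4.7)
The plan for (1) is essentially a direct unwinding of the standard-complex description of $\lim$ established in~\S\ref{standardcomplex}. Since $\C$ is strongly connected with binary coproducts, $\lim\,M$ may be identified with $\pi^0(M\BB(c))$, which in turn is the equalizer
\[
\ker\bigl(M(i_1)-M(i_2)\colon M(c) \to M(c\sqcup c)\bigr)
\]
of the two structure maps induced by the coproduct inclusions $i_1,i_2$. The notion of monoadditivity is designed precisely so that this difference map is a monomorphism, whence (1) becomes essentially a tautology once the definition is unwound.

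For (2), I will extract from the splitting data an explicit contracting homotopy on the standard cochain complex. The intuition is that a split monoadditive structure should provide, at each level $M(\sqcup^{n+1}c)$, a coherent family of retractions $s^n \colon M(\sqcup^{n+1}c) \to M(\sqcup^n c)$ compatible with the cosimplicial face maps. The identity $d^{n-1}s^{n-1} + s^n d^n = \id$ on the alternating-sum complex will then follow from the usual cosimplicial identities, forcing every cohomotopy group, and hence every $\lim^i M$, to vanish.

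For (3), the key algebraic input is the canonical decomposition of the augmentation ideal under a free product of free groups. For any free groups $F_1,F_2$ arising as presentations of $G$, the standard free resolution of $\BaseRing$ over $\BaseRing[F_1 \ast F_2]$ yields
\[
\Delta_\BaseRing(F_1 \ast F_2)\ \cong\ \BaseRing[F_1 \ast F_2]\otimes_{\BaseRing[F_1]}\Delta_\BaseRing(F_1)\ \oplus\ \BaseRing[F_1 \ast F_2]\otimes_{\BaseRing[F_2]}\Delta_\BaseRing(F_2).
\]
Tensoring with $M$ over $\BaseRing[F_1 \ast F_2]$ converts this into
\[
\f \otimes_{\BaseRing[F]} M\ \cong\ \f_1 \otimes_{\BaseRing[F_1]} M\ \oplus\ \f_2 \otimes_{\BaseRing[F_2]} M
\]
at the coproduct presentation, and I will check that this decomposition matches the images of the two inclusion-induced structure maps, furnishing the required split monoadditive structure. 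The symmetric argument on the right handles $M \otimes_{\BaseRing[F]} \f$.

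The main obstacle, I expect, lies in promoting the pointwise decomposition of part (3) to genuinely functorial, cosimplicially compatible splitting data: one has to verify compatibility not only with the two coproduct inclusions but also with the remaining face and degeneracy maps of $\BB(c)$ and, more generally, with arbitrary morphisms between free presentations in $\Pres(G)$. Once this bookkeeping is carried out, parts (1) and (2) reduce to formal cosimplicial homological algebra on the standard complex.
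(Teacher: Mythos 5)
The statement you are proving is presented in the paper only as a \emph{Fact} imported from \cite[Corollary~3.14, \S 4.2]{fr_codes_2017}; the paper supplies no proof of its own, so there is nothing internal to compare against, only the cited source. Your overall plan --- read $\lim$ off the standard complex for (1), build a contracting homotopy from the splitting data for (2), and for (3) use the free-product decomposition of the augmentation ideal --- is in the right spirit and for (3) matches the standard line of argument.

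That said, there are two places where the outline would not survive being written out. First, for (1) you assert that ``monoadditivity is designed precisely so that'' the difference map $M(i_1)-M(i_2)\colon M(c)\to M(c\sqcup c)$ is injective, so the equivalence is a tautology. But the definition in the cited source concerns injectivity of the canonical map $M(c)\oplus M(c')\to M(c\sqcup c')$ for \emph{all} pairs $(c,c')$, not just the diagonal one. The implication ``monoadditive $\Rightarrow \lim M = 0$'' is indeed immediate (take $c=c'$ and apply injectivity to $(x,-x)$), but the converse requires an argument using strong connectedness to reduce the general $c,c'$ to the diagonal case; it is not definitional. You need to either cite the precise definition and show it coincides with your reformulation, or supply the reduction.

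Second, and more seriously, in (3) your displayed decomposition $\f\otimes_{\BaseRing[F]}M\cong \f_1\otimes_{\BaseRing[F_1]}M\oplus \f_2\otimes_{\BaseRing[F_2]}M$ silently conflates three different groups: $M(c_1)$, $M(c_2)$, and $M(c_1\sqcup c_2)$. What the free-product decomposition actually gives, after tensoring, is
\[
\Delta(F_1\ast F_2)\otimes_{\BaseRing[F_1\ast F_2]} M_{12}\;\cong\;\bigl(\Delta(F_1)\otimes_{\BaseRing[F_1]} M_{12}\bigr)\oplus\bigl(\Delta(F_2)\otimes_{\BaseRing[F_2]} M_{12}\bigr),
\]
with $M_{12}=M(c_1\sqcup c_2)$ on \emph{both} sides. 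This is not yet a splitting of $\bigl(\Delta(F_1)\otimes M_1\bigr)\oplus\bigl(\Delta(F_2)\otimes M_2\bigr)\hookrightarrow \Delta(F_1\ast F_2)\otimes M_{12}$, because there is no natural map $M_{12}\to M_1$ to finish the retraction. The split monoadditivity structure that the cited source uses is of a different shape: the point is rather that the cosimplicial object $n\mapsto \f\,\BB(c)^n\otimes M\,\BB(c)^n$ admits an extra degeneracy (a contracting homotopy) built from the \emph{idempotent projections} onto the factors $\Delta(F_{(i)})\otimes_{\BaseRing[F_{(i)}]}M(\sqcup^{n+1}c)$, combined with the codegeneracies of $\BB(c)$ acting on the $M$-coordinate. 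You flag the cosimplicial bookkeeping as the main obstacle; it is, but it is not merely bookkeeping --- without being precise about where the retraction lands, the argument as written does not produce a well-defined natural retraction at all.
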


\subsubsection{K\"unneth formula}

Let \(\BaseRing\) be a commutative ring. A \textit{functorial \(\BaseRing\)-algebra} is a functorial ring \(\O\) equipped with a natural transformation \(\const(\BaseRing) \to \O\). The following proposition is well known.
\begin{prop}
Let \(\O\) be a functorial \(\BaseRing\)-algebra.
\begin{enumerate}
\item The limits \(\lim^\smallbullet_\BaseRing \, \O\) form a \(\Z_{\ge 0}\)-graded \(k\)-algebra in a natural way.
\item If \(M\) is an \(\O\)-module, then the limits \(\lim^\smallbullet_\BaseRing \, M\) form a graded \((\lim^\smallbullet \, \O)\)-algebra.
\end{enumerate}
\end{prop}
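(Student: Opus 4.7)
The plan is to lift the multiplicative structure of $\O$ from the functorial level to its higher limits via an Alexander--Whitney cup product at the cochain level. First, observe that a functorial $\BaseRing$-algebra is precisely a monoid object in $\Fun(\C, \Mod(\BaseRing))$ with respect to the pointwise tensor product over $\BaseRing$, and an $\O$-module is a module over this monoid. The goal is therefore to show that these structures pass to the cohomology of an appropriate cochain-level model for $\RR\lim$.

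In the main case of interest, where $\C$ is strongly connected with binary coproducts, the cleanest argument uses the standard complex of Section~\ref{standardcomplex}. Fix any $c \in \C$. Since each coface and codegeneracy map in $\BB(c)$ is induced by a morphism in $\C$ and $\O$ lands in $\BaseRing$-algebras, the cosimplicial abelian group $\O\BB(c)^\bullet$ is in fact a cosimplicial $\BaseRing$-algebra, and $M\BB(c)^\bullet$ is a cosimplicial module over it. By loc.\ cit.\ we have $\lim^n_\BaseRing \O = \pi^n(\O\BB(c)^\bullet)$, and similarly $\lim^n_\BaseRing M = \pi^n(M\BB(c)^\bullet)$. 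The standard Alexander--Whitney cup product endows the Moore cochain complex $Q(\O\BB(c)^\bullet)$ with a DG $\BaseRing$-algebra structure and $Q(M\BB(c)^\bullet)$ with a compatible DG module structure; passing to cohomology yields (1) and (2).

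For a general index category $\C$, one argues analogously after choosing an injective fibrant replacement $\O \to \O'$ in $\Fun(\C, \Mod(\BaseRing)^\Delta)_\inj$ that is multiplicative, together with a compatible fibrant replacement $M \to M'$ as an $\O'$-module. The main technical obstacle is precisely the existence and essential uniqueness of such multiplicatively structured fibrant replacements; this follows from standard results on monoidal model structures transferred to categories of monoids and their modules. Naturality of the AW cup product then guarantees that the induced graded algebra structure is independent of all choices, and compatibility of the two constructions in the strongly connected case is immediate from the functoriality of $\BB$ in $c$.
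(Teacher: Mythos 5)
The paper itself offers no proof here --- it simply records the statement as ``well known'' and moves on --- so there is nothing in the text to compare against. Your argument for the case of a strongly connected $\C$ with binary coproducts is correct and, I believe, exactly the intended mechanism: since the cofaces and codegeneracies of $\BB(c)$ come from genuine morphisms of $\C$ and $\O$ takes values in $\BaseRing$-algebras, $\O\,\BB(c)$ is a cosimplicial $\BaseRing$-algebra and $M\,\BB(c)$ a cosimplicial module over it; the standard cosimplicial cup product (front-face/back-face, equivalently Alexander--Whitney) makes the associated cochain complexes into a DG algebra and DG module, and passing to cohomotopy yields the graded structures. The identification $\lim^n_{\BaseRing}\O \simeq \pi^n(\O\,\BB(c))$ needed here is supplied by the paper's change-of-rings proposition together with \cite[Theorem~2.12]{standard_complex_2020}.

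Your extension to a general index category $\C$, however, is genuinely gappy. You invoke ``multiplicatively structured fibrant replacements'' as if these were granted by ``standard results on monoidal model structures,'' but the model structure the paper actually uses on $\Fun(\C,\Mod(\BaseRing)^\Delta)$ is the \emph{injective} one, and injective model structures on (co)chain complexes and diagram categories are typically \emph{not} monoidal (the pushout-product axiom fails for cofibrations = monomorphisms), so the machinery of transferred model structures on monoid objects is not available off the shelf. Moreover, even in a monoidal setting, a multiplicative fibrant replacement requires the monoid axiom and cofibrancy hypotheses you have not checked. A cleaner route to the general statement --- and the likely reason the paper calls this ``well known'' --- is abstract: $\const\colon \Mod(\BaseRing)\to\Fun(\C,\Mod(\BaseRing))$ is strong symmetric monoidal, so its right adjoint $\lim$ is automatically lax symmetric monoidal (doctrinal adjunction), and this lax monoidal structure descends to the total right derived functor $\RR\lim$; a lax monoidal functor sends algebras to algebras and modules to modules, and taking cohomology of the resulting DG objects gives (1) and (2) without any explicit fibrant replacement. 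For the purposes of this paper (where $\C = \Pres(G)$ is strongly connected with coproducts), your standard-complex argument already suffices, so the gap is only in the added generality you chose to address.
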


\begin{defn}
We say that a representation \(M\) of a category \(\C\) is \textit{finitary} if there are only finitely many non-zero higher limits \(\lim^\smallbullet \, M\).
\end{defn}

\begin{cor}\label{cor:kunneth_appl}
Let \(\BaseRing\) be a heriditary commutative ring. Let \(M\) and \(N\) be finitary representations of \(\BaseRing[F]\)-modules such that one of them is flat. Then for every \(n \ge 0\) there is an exact sequence
\[\bigoplus_{i+j=n} \lim^i\, M \otimes_\BaseRing \lim^j\,N \mono \lim^n \, M \otimes_{\BaseRing[F]} N \epi \bigoplus_{i+j=n+1} \Tor^\BaseRing_1(\lim^i\, M, \lim^j\,N).\]
\end{cor}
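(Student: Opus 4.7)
The strategy is to reduce the corollary to the classical algebraic Künneth theorem for cochain complexes by combining the standard-complex description of higher limits with a bi-cosimplicial argument.

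First, I recall from \S\ref{standardcomplex} that the higher limits of any functorial $\BaseRing$-module $X$ on $\C = \Pres(G)$ can be computed as $\lim^n X \cong H^n C^\bullet(X)$, where $C^\bullet(X)$ denotes the alternating-sum standard cochain complex with $C^k(X) = X(\BB(c)^k)$ at a chosen object $c$. Applying this simultaneously to $M$, $N$ and $M \otimes_{\BaseRing[F]} N$, I observe that at each cosimplicial level
\[C^k(M \otimes_{\BaseRing[F]} N) = M(\BB(c)^k) \otimes_{\BaseRing[F(\BB(c)^k)]} N(\BB(c)^k).\]

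Next, I produce a natural quasi-isomorphism between $C^\bullet(M \otimes_{\BaseRing[F]} N)$ and the total complex of the bi-cosimplicial $\BaseRing$-module $A^{p,q} = M(\BB(c)^p) \otimes_\BaseRing N(\BB(c)^q)$. By the Eilenberg--Zilber theorem, the total complex of $A^{\bullet,\bullet}$ is quasi-isomorphic to its diagonal $C^\bullet(M) \otimes_\BaseRing C^\bullet(N)$, the levelwise tensor of the two standard complexes. The comparison with $C^\bullet(M \otimes_{\BaseRing[F]} N)$ is given levelwise by the canonical surjection
\[M(\BB(c)^p) \otimes_\BaseRing N(\BB(c)^p) \epi M(\BB(c)^p) \otimes_{\BaseRing[F(\BB(c)^p)]} N(\BB(c)^p).\]
The flatness hypothesis on one of $M, N$, together with the vanishing of all limits on split monoadditive functors of the form $\f \otimes_{\BaseRing[F]} (-)$ and $(-) \otimes_{\BaseRing[F]} \f$ recalled in the preceding fact on split monoadditivity, ensures that the kernel of this surjection is $\lim$-acyclic, so the map induces an isomorphism on cohomology.

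Finally, the identifications above give $\lim^n (M \otimes_{\BaseRing[F]} N) \cong H^n(C^\bullet M \otimes_\BaseRing C^\bullet N)$, to which I apply the classical algebraic Künneth theorem for cochain complexes of $\BaseRing$-modules. Since $\BaseRing$ is hereditary, $\Tor^\BaseRing_q$ vanishes for $q \ge 2$, and the Künneth spectral sequence collapses into the claimed short exact sequence involving only $\otimes_\BaseRing$ and $\Tor^\BaseRing_1$. The finitary hypothesis on $M, N$ keeps all the direct sums $\bigoplus_{i+j=n}$ and $\bigoplus_{i+j=n+1}$ finite and removes any convergence issues.

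\textbf{Main obstacle.} The delicate step is the middle one: justifying that the canonical projection from $\otimes_\BaseRing$ to $\otimes_{\BaseRing[F]}$ induces an isomorphism on cohomology of the standard complexes. This is precisely where the flatness hypothesis enters essentially, via the split-monoadditivity vanishing results for the $\f$-tensor functors on $\Pres(G)$. Everything else is either a formal consequence of the standard-complex model or a direct application of classical homological algebra.
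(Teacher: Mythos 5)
The paper's own proof is a single sentence: it cites the K\"unneth spectral sequence from \cite[Section~3]{standard_complex_2020}, observes $\lim^\smallbullet\BaseRing[F]=(\BaseRing,0,0,\ldots)$, and notes that hereditariness of $\BaseRing$ forces degeneration into the stated short exact sequence. Your proposal instead tries to \emph{re-derive} the K\"unneth comparison from scratch via the standard complex, a bicosimplicial Eilenberg--Zilber argument, and a passage from $\otimes_\BaseRing$ to $\otimes_{\BaseRing[F]}$. That is a genuinely different route, and it is where your argument has a real gap.

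The critical unproved step is your claim that the kernel of the levelwise surjection $M\otimes_\BaseRing N\epi M\otimes_{\BaseRing[F]}N$ is $\lim$-acyclic because it involves an $\f$-tensor. In fact this kernel is the image of the multiplication map $\f\otimes_{\BaseRing[F]}(M\otimes_\BaseRing N)\to M\otimes_\BaseRing N$ (with the diagonal $F$-action), so it is only a \emph{quotient} of the split-monoadditive functor $\f\otimes_{\BaseRing[F]}(M\otimes_\BaseRing N)$, with kernel $H_1(F;M\otimes_\BaseRing N)$. Vanishing of all higher limits of this quotient would require $\lim^i H_1(F;M\otimes_\BaseRing N)=0$ for every $i\ge 1$, which you do not establish; the paper's Lemma preceding the corollary only gives $\lim^0 H_1(F;-)=0$. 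Equivalently you would need $M\otimes_\BaseRing N$ to be flat over $\BaseRing[F]$ for the diagonal action, which does not follow from flatness of $M$ over $\BaseRing[F]$ alone (for instance if $N$ fails to be flat over $\BaseRing$). So the asserted quasi-isomorphism $C^\smallbullet(M\otimes_\BaseRing N)\to C^\smallbullet(M\otimes_{\BaseRing[F]}N)$ is not justified.

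Two further issues are worth flagging. First, you invoke the classical K\"unneth short exact sequence for $H^n\bigl(\mathrm{Tot}(C^\smallbullet M\otimes_\BaseRing C^\smallbullet N)\bigr)$; over a hereditary (not necessarily noetherian or PID) base, this requires flatness (or projectivity) of the modules \emph{and} of the boundaries in one of the complexes, and $C^pM$ being $\BaseRing$-flat does not by itself give flat boundaries. Second, you use the finitary hypothesis only to ``keep the direct sums finite,'' whereas in the cited spectral sequence it enters the construction itself; your argument does not show where it is indispensable. (A minor terminological slip: the diagonal of the bicosimplicial object $A^{p,q}$ is $A^{p,p}=C^p(M\otimes_\BaseRing N)$, not $C^\smallbullet M\otimes_\BaseRing C^\smallbullet N$; the latter is the total complex, and Eilenberg--Zilber compares those two. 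Your intent is clear, but the roles are swapped in the write-up.)
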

\begin{proof}
Note that \(\lim^\smallbullet \, \BaseRing[F] = (\BaseRing, 0, 0, \ldots)\), and since \(\BaseRing\) is heriditary, it has global dimension \(1\), and the K\"unneth spectral sequence \cite[Section~3]{standard_complex_2020} degenerates to the stated short exact sequences.
\end{proof}

\section{Shifting formulas}
\label{sec:shifting}

In this section, we establish shifting formulas that eventually enable partial computation of the higher limits of the functors \((\Ex^n R_\ab)_G\); see Corollary~\ref{cor:basic_lim_ext_rel}, Proposition~\ref{prop:lim_i_ex_rel_modulo_torsion} and Proposition~\ref{prop:lim_ex_mod_p}. Our method is a direct generalization of the method suggested in \cite[Proposition~6.1]{ivanov_higher_2015}.

We say that a group \(G\) is \textit{\(n\)-torsionless}, if no nontrivial element \(x\) of \(G\) satisfies the equation \(x^n = 1\). This is equivalent to the statement that there is no element in \(G\) of order \(p\) for every prime divisor \(p\) of \(n\).
Note that a group \(G\) is \(n!\)-torsionless if and only if there is no element in \(G\) of order \(p\) for every prime \(p\) such that \(p \le n\).

By employing a standard dimension-shifting argument, we obtain a canonical isomorphism that elucidates the fundamental relationship between homology groups with exterior power coefficients and their symmetric power counterparts.
This result generalizes \cite[Proposition~1.1]{hartley_note_1991}.
\begin{lem}\label{lem:hmlgy_ex_sym_shift}
Let \(n \ge 1\) and \(G\) be a \(n!\)-torsionless group. Let \(A \mono B \epi C\) be a short exact sequence of \(\ZG\)-modules, such that \(B\) is free as a \(\ZG\)-module and \(C\) is a torsionless abelian group. Then for every \(\ZG\)-module \(M\) there are isomorphisms
\[H_{i+n}(G; S^n C \otimes M) \simeq H_i(G; \Ex^n A \otimes M), \quad i \ge 1.\]
\end{lem}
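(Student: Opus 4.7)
The plan is to apply the first Koszul long exact sequence from the Fact above to $A \mono B \epi C$, tensor it with $M$ over $\Z$, and then run a standard dimension-shift through the resulting long exact sequence in $G$-homology.

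First, I would verify the Koszul hypotheses: $B$ is $\Z$-free because $\ZG$ is $\Z$-free and $B$ is $\ZG$-free; $C$ is $\Z$-flat as a torsion-free abelian group; and $A$ is then $\Z$-flat as the kernel of a surjection between $\Z$-flat modules. The resulting sequence
\[0 \to \Ex^n A \to \Ex^{n-1}A \otimes_\Z B \to \cdots \to A \otimes_\Z S^{n-1} B \to S^n B \to S^n C \to 0\]
carries a diagonal $G$-action and has all $\Z$-flat terms. Splitting it via its successive images $J_l$ and inducting from the right end shows each $J_l$ is $\Z$-flat, so tensoring with $M$ over $\Z$ preserves exactness. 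Write the tensored sequence as $0 \to T_0 \otimes M \to T_1 \otimes M \to \cdots \to T_{n+1} \otimes M \to 0$ with $T_l = \Ex^{n-l}A \otimes_\Z S^l B$ for $0 \le l \le n$ and $T_{n+1} = S^n C$.

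The heart of the proof is the claim that $T_l \otimes M$ is $G$-acyclic in positive degrees for each $1 \le l \le n$. The key input is that under the $n!$-torsionless hypothesis on $G$ (hence $l!$-torsionless for $l \le n$), the symmetric power $S^l B$ is $\ZG$-free. This is established by decomposing $S^l B$ into pieces indexed by $S_l$-orbits of a $\ZG$-basis of $B^{\otimes l}$, observing that the diagonal $G$-action on unordered $l$-tuples from $G$ is free (any nontrivial $g \in G$ stabilizing such a tuple would permute it and hence have finite order dividing $l!$, contradicting the hypothesis), and using that tensor products of $\ZG$-free modules with diagonal $G$-action remain $\ZG$-free via the shear isomorphism. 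The torsion hypothesis is essential, as already $S^2 \ZG \cong \ZG \oplus \Z$ for $G = \Z/2$ shows. Given the freeness, $S^l B \otimes_\Z N$ with diagonal action is $G$-acyclic in positive degrees for any $\Z$-module $N$: the case $S^l B = \ZG$ reduces via the shear to $\ZG \otimes_\Z N$ with first-factor action, whose $G$-homology is computed by a $\ZG$-projective (hence $\Z$-free) resolution of $\Z$ tensored with $N$, which vanishes in positive degrees since $\Z$ is $\Z$-projective.

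Finally, the dimension shift: for each short exact sequence $0 \to J_{l-1} \otimes M \to T_l \otimes M \to J_l \otimes M \to 0$ ($1 \le l \le n$), the long exact $G$-homology sequence together with the acyclicity of $T_l \otimes M$ yields $H_{m+1}(G; J_l \otimes M) \cong H_m(G; J_{l-1} \otimes M)$ for all $m \ge 1$. Iterating from $J_0 \otimes M = \Ex^n A \otimes M$ up to $J_n \otimes M = S^n C \otimes M$ gives the required isomorphism $H_{i+n}(G; S^n C \otimes M) \cong H_i(G; \Ex^n A \otimes M)$ for $i \ge 1$. The main obstacle is the $\ZG$-freeness analysis of $S^l B$ under the torsion hypothesis on $G$, which carries the conceptual content of the lemma.
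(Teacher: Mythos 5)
Your proof is correct and follows essentially the same route as the paper: the Koszul long exact sequence for $A \mono B \epi C$, $\ZG$-freeness of $S^l B$ ($l \le n$) under the $n!$-torsionless hypothesis (which the paper cites from Hartley--St\"ohr and you re-derive via the orbit decomposition), acyclicity of the interior terms after tensoring with $M$, and a dimension shift. The only cosmetic difference is that the paper first shows each $\Ex^{n-l}A \otimes S^l B$ is itself $\ZG$-free via Brown's corollary (using that $A$ is $\Z$-free) and then applies Shapiro, while you apply the shear isomorphism directly to $S^l B \otimes_\Z N$ with $N = \Ex^{n-l}A \otimes M$, which is an equivalent argument.
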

\begin{proof}
Since \(A, B\) and \(C\) are torsionless abelian groups, there is a corresponding Koszul sequence.
Tensoring it over \(\Z\) with \(M\) yields a long exact sequence
\begin{equation}
\label{eq:tensored_koszul}
\Ex^n A \otimes M \mono \Ex^{n-1} A \otimes B \otimes M \to \dots \to S^n B \otimes M \epi S^n C \otimes M.
\end{equation}

Since \(G\) has no torsion up to \(n\) we conclude that for any free \(\ZG\)-module \(B\) the exterior powers \(S^k B, k \le n\), are free \(\ZG\)-modules; see \cite[Lemma~4.1]{hartley_homology_1991}.
Since \(A\) is a free abelian group, the exterior powers \(\Ex^* A\) are free abelian groups, and all modules \(\Ex^{n-k} A \otimes S^k B\) are free as \(\ZG\)-modules; see \cite[III \S 5, Corollary~5.7]{brown_cohomology_1982}. Hence, by Shapiro's lemma, tensoring the resulting sequence with any \(M\) yields modules with trivial homology, i.e.
\[H_i(G; \Ex^{n-k} A \otimes S^k B \otimes M) = 0, \quad i \ge 1.\]

Finally, by a simple dimension shifting we derive the result.
\end{proof}

The following proposition establishes that, within the functorial framework and under the specified additional hypotheses, the higher limits induce inverse shiftings relative to those presented in the preceding lemma.
\begin{prop}\label{prop:lim_ex_sym_shift}
Let \(G\) be a \(n!\)-torsionless group and \(M\) be a \(\ZG\)-module. If for some \(0 \le k \le n\) we have
\[\lim^i \, (\Ex^{n-j} R_\ab \otimes S^j P \otimes M)_G = 0, \quad 0 \le i \le k, \; 1 \le j \le k - i + 1,\]
then there are isomorphisms
\[\lim^i\, (\Ex^n R_\ab \otimes M)_G = H_{n-i}(G; S^n(\g) \otimes M), \quad 0 \le i \le k.\]
If \(k = n\), then \(\lim^i \, (\Ex^n R_\ab \otimes M)_G = 0\) for \(i > n\).
\end{prop}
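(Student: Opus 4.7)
The plan is to combine the Koszul exact sequence with a dimension-shifting technique, extending the approach of Lemma~\ref{lem:hmlgy_ex_sym_shift} from ordinary group homology to higher limits over $\Pres(G)$.

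First I would break the exact sequence of functors
\[0 \to K^0 \to K^1 \to \cdots \to K^n \to T \to 0, \qquad K^j := \Ex^{n-j} R_\ab \otimes S^j P \otimes M, \; T := S^n(\g) \otimes M,\]
into short exact sequences $(*)_j : 0 \to K_j \to K^j \to K_{j+1} \to 0$, with $K_0 = 0$, $K_1 = K^0$, $K_{n+1} = T$. The long exact sequence of group homology applied to each $(*)_j$, together with the vanishing $H_i(G; K^j) = 0$ for $i \ge 1$, $1 \le j \le n$ (the crux of the proof of Lemma~\ref{lem:hmlgy_ex_sym_shift}), gives both the shift $H_i(G; K_{j+1}) \simeq H_{i-1}(G; K_j)$ for $i \ge 2$ and the four-term exact sequences of functors on $\Pres(G)$
\[0 \to H_1(G; K_{j+1}) \to (K_j)_G \to (K^j)_G \to (K_{j+1})_G \to 0.\]
Iterating the former yields $H_1(G; K_{j+1}) \simeq H_{n+1-j}(G; T)$, a constant functor.

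Next I split each four-term sequence into the pair
\[0 \to H_1(G; K_{j+1}) \to (K_j)_G \to Y_j \to 0, \qquad 0 \to Y_j \to (K^j)_G \to (K_{j+1})_G \to 0,\]
and run the long exact sequence of higher limits on both. Since $\Pres(G)$ is strongly connected, the constant functor $H_1(G; K_{j+1})$ is $\lim$-acyclic, whence the first LES gives $\lim^i(K_j)_G \simeq \lim^i Y_j$ for $i \ge 1$; the hypothesis $\lim^i(K^j)_G = 0$ for $i + j \le k + 1$ then promotes the second LES to $\lim^i Y_j \simeq \lim^{i-1}(K_{j+1})_G$ in the same range, giving the central shift
\[\lim^i(K_j)_G \simeq \lim^{i-1}(K_{j+1})_G, \qquad 1 \le i \le k, \; 1 \le j \le k - i + 1.\]

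Iterating this shift $i$ times from $K_1 = K^0$ reduces the computation to $\lim(K_{i+1})_G$, which by a final application of $\lim$ to the four-term sequence for $(*)_{i+1}$ (or directly via $K_{n+1} = T$ when $i = n$) evaluates to $H_{n-i}(G; T)$; this proves the main formula $\lim^i(\Ex^n R_\ab \otimes M)_G = H_{n-i}(G; S^n(\g) \otimes M)$ for $0 \le i \le k$.

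The closing claim, $\lim^i(K^0)_G = 0$ for $i > n$ in the case $k = n$, is the main obstacle, because the shift terminates at $i + j = n + 1$. I would handle it via the spectral sequence from \S~2.5.2 applied to the augmented coinvariants complex $\tilde C^\smallbullet := ((K^0)_G \to \cdots \to (K^n)_G \to T_G)$, whose cohomologies coincide with the constant functors $H_{n-p}(G; T)$ for $0 \le p \le n - 1$ and vanish in degrees $\ge n$, and are hence $\lim$-acyclic. The $k = n$ hypothesis kills all $E_1^{p, q}$ with $1 \le p \le n$ and $p + q \le n + 1$; combined with the triviality of the abutment in total degrees $\ge n$ and the transgression $d_{n+1} : E_{n+1}^{0, n} \to E_{n+1}^{n+1, 0} = T_G$ recovering the $i = n$ value $H_0(G; T)$, this forces $\lim^i(K^0)_G = 0$ for $i > n$.
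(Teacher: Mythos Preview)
Your argument for the range $0 \le i \le k$ is correct and coincides with the paper's: the paper cuts the tensored Koszul sequence into short exact pieces $N_{j-1}\otimes M \hookrightarrow K^j \twoheadrightarrow N_j\otimes M$ (your $K_j$ is the paper's $N_{j-1}\otimes M$, shifted by one index), derives the same four-term coinvariant sequences, identifies $H_1(G;N_j\otimes M)$ with a constant functor by iterated shifting, and then invokes Proposition~\ref{prop:spectral_application_1} where you split into two short exact sequences and run two long exact sequences of higher limits. The packaging differs, the content is identical.

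The gap is in your spectral-sequence argument for the final clause $\lim^i(K^0)_G=0$ for $i>n$ when $k=n$. Vanishing of the abutment in total degree $\ge n$ only forces $E_\infty^{0,q}=0$ for $q\ge n$; it does \emph{not} force $E_1^{0,q}=\lim^q(K^0)_G$ to vanish. For $q=n+1$, say, the outgoing differentials $d_r\colon E_r^{0,n+1}\to E_r^{r,\,n+2-r}$ (for $1\le r\le n$) land in entries of total degree $n+2$, and these targets $E_1^{r,\,n+2-r}=\lim^{n+2-r}(K^r)_G$ are \emph{not} covered by the hypothesis, which only guarantees vanishing for $p+q\le n+1$. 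So $\lim^{n+1}(K^0)_G$ could be nonzero and still die on a later page via a map into one of these uncontrolled terms; nothing in your argument excludes this. To be fair, the paper's own proof is silent on this clause as well; in the sole application (Proposition~\ref{prop:lim_i_ex_rel_modulo_torsion}) the functors $(K^j)_G\otimes\Z[1/n!]$ are direct summands of split monoadditive functors, so \emph{all} of their higher limits vanish, and under that stronger input your spectral-sequence argument (with columns $1,\dots,n$ identically zero) does go through.
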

\begin{proof}
Consider a Koszul sequence for the relation sequence \eqref{eq:relation_module}.
Cut it into short exact sequences by introducing
\[N_j = \coker \{\Ex^{n-j+1} R_\ab \otimes S^{j-1} P \to \Ex^{n-j} R_\ab \otimes S^j P\}, \quad 1 \le j \le n.\]
For convenience, we set \(N_0 = \Ex^n R_\ab\). Note that \(N_n = S^n(\g)\).
Then we can cut the long exact sequence~\eqref{eq:tensored_koszul} into short exact sequences
\[N_{j-1} \otimes M \mono \Ex^{n-j} R_\ab \otimes S^j P \otimes M \epi N_j \otimes M, \quad 1 \le j \le n.\]
The middle term is \(H_*(G; -)\)-acyclic (see the proof of the previous lemma), hence for every \(1 \le j \le n\) there are isomorphisms
\begin{equation}\label{eq:ni_shift}
H_{l+1}(G; N_j \otimes M) \simeq H_l(G; N_{j-1} \otimes M), \quad l \ge 1,
\end{equation}
and there is the short exact sequence
\begin{equation}\label{eq:ni_4_term}
H_1(G; N_j \otimes M) \mono (N_{j-1} \otimes M)_G \to (\Ex^{n-j} R_\ab \otimes S^j B \otimes M)_G \epi (N_j \otimes M)_G.
\end{equation}
Iterating \eqref{eq:ni_shift}, we obtain for any \(0 \le j \le n\) that
\begin{equation}\label{eq:ni_h1_identification}
H_1(G; N_j \otimes M) \simeq H_{n-j+1}(G; N_n \otimes M) = H_{n-j+1}(G; S^n(\g) \otimes M)
\end{equation}
is a constant functor, which implies that it has trivial higher limits.

Since functor \(\lim^0\) is left exact, applying it to the sequence \eqref{eq:ni_4_term} yields identifications
\begin{equation}\label{eq:ni_homology_as_lim}
\lim \, (N_{j-1} \otimes M)_G = H_1(G; N_j \otimes M) \overset{\eqref{eq:ni_h1_identification}}{=} H_{n-j+1}(G; S^n(\g) \otimes M)
\end{equation}
for \(1 \le j \le k\).

Iteratively applying Proposition~\ref{prop:spectral_application_1} to the 4-term exact sequence \eqref{eq:ni_4_term}, we obtain the sequence of identifications
\begin{align*}
H_{n-i}(G; S^nC \otimes M) &\overset{\eqref{eq:ni_homology_as_lim}}{=} \lim \, (N_i \otimes M)_G = \lim^1\, (N_{i-1} \otimes M)_G = \cdots \\
&= \lim^i \, (N_0 \otimes M)_G = \lim^i \, (\Ex^n R_\ab \otimes M)_G  
\end{align*}
for \(0 \le i \le k\).
\end{proof}

The following lemma provides techniques for verifying the vanishing of certain common limits.
\begin{lem}\label{lem:h0_m_o_p_monoadditive}
Let \(G\) be a group, and let \(M\) be some functorial \(\ZG\)-module.
\begin{enumerate}
\item The functor \(H_0(G; M \otimes P)\)
is split monoadditive.
\item If \(G\) is \(2\)-torsionless, then \(H_0(G; M \otimes \Ex^2(P))\) is monoadditive.
\item If \(M\) is free \(\ZG\)-module, then \(H_0(G; M \otimes R_\ab)\) is monoadditive.
\end{enumerate}
\end{lem}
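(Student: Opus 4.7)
The plan is to reduce all three parts to the cited fact that functorial representations of the form $N \otimes_{\ZF} \f$ are split monoadditive. The key computational input is a Frobenius-type shearing identity: for any $\ZF$-module $A_0$ and any $\ZG$-module $M$,
\[H_0\bigl(G;\, M \otimes_\Z (\ZG \otimes_{\ZF} A_0)\bigr) \;\cong\; M^{\circ}|_F \otimes_{\ZF} A_0,\]
where $M^{\circ}|_F$ denotes $M$ viewed as a right $\ZF$-module via the inverse of its $G$-action together with the projection $F \twoheadrightarrow G$. Applied to $A_0 = \f$ (so that $\ZG \otimes_{\ZF} \f = P$), this immediately gives part~(1): $H_0(G; M \otimes P) = M^{\circ}|_F \otimes_{\ZF} \f$ is split monoadditive.

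For part~(2), I would start from the natural short exact sequence of $\ZG$-modules
\[0 \to \Ex^2 P \to T^2 P \to S^2 P \to 0,\]
with the left map the antisymmetrization $x \wedge y \mapsto x \otimes y - y \otimes x$ and the right map the natural projection. The central observation is that when $G$ is $2$-torsionless, both $\Ex^2 P$ and $S^2 P$ are free $\ZG$-modules: fixing a $\ZG$-basis $\{p_i\}$ of $P$, the diagonal $G$-action permutes the natural $\Z$-bases $\{g p_i \wedge h p_j\}$ and $\{g p_i \cdot h p_j\}$ up to sign, and a nontrivial stabilizer of such a basis element would force the existence of an element $a \ne 1$ in $G$ with $a^2 = 1$. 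Since $S^2 P$ is projective the sequence splits as $\ZG$-modules, and the splitting survives applying $M \otimes_\Z -$ and $(-)_G$, yielding a functorial injection
\[H_0(G; M \otimes \Ex^2 P) \hookrightarrow H_0(G; M \otimes T^2 P).\]
Iterating the shearing identity (using $T^2 P = P \otimes_\Z P \cong \ZG \otimes_{\ZF} (\f \otimes_\Z P)$) identifies the right-hand side with $N \otimes_{\ZF} \f$ for $N = (M \otimes_\Z P)^\circ|_F$, which is split monoadditive; applying $\lim$ (left exact) then kills the subfunctor.

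For part~(3), I would apply $M \otimes_\Z -$ to the relation sequence $R_\ab \hookrightarrow P \twoheadrightarrow \g$ (exactness is preserved since all terms are $\Z$-free) and take group homology. When $M$ is a free $\ZG$-module, shearing turns $M \otimes_\Z \g$ (with diagonal action) into a free $\ZG$-module, so it is acyclic for $H_*(G;-)$ in positive degrees. The resulting long exact sequence collapses to
\[0 \to H_0(G; M \otimes R_\ab) \to H_0(G; M \otimes P) \to H_0(G; M \otimes \g) \to 0,\]
and applying $\lim$ together with part~(1) yields $\lim H_0(G; M \otimes R_\ab) \hookrightarrow \lim H_0(G; M \otimes P) = 0$.

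The main obstacle is the orbit analysis in part~(2) producing the $\ZG$-freeness of $\Ex^2 P$ and $S^2 P$: elementary but using the $2$-torsionless hypothesis essentially, since an order-$2$ element of $G$ would either pin down an unordered pair of basis elements (breaking the freeness of $S^2 P$) or act as $-1$ on a would-be basis vector of $\Ex^2 P$ (introducing a torsion relation that destroys projectivity and hence splittability). Without this hypothesis the weaker composition $\Ex^2 P \to T^2 P \to \Ex^2 P = 2 \cdot \mathrm{id}$ would yield only a $2$-torsion statement on the limit, not outright vanishing.
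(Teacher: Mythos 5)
Your proof is correct and follows the same route as the paper: part~(1) is the Frobenius/shearing identification $H_0(G;M\otimes P)\cong M\otimes_{\ZF}\f$, part~(2) uses the Koszul sequence $\Ex^2P\mono T^2P\epi S^2P$ together with the $\ZG$-freeness of $S^2P$ for $2$-torsionless $G$ (which the paper simply cites from Hartley--St\"ohr and you re-derive by the orbit count), and part~(3) uses the relation sequence with the cokernel made $\ZG$-free (hence $H_*(G;-)$-acyclic) by freeness of $M$. The one inessential extra in your write-up is the observation that $\Ex^2P$ is also $\ZG$-free: it is true, but not needed, since only the projectivity (indeed just flatness) of the cokernel $S^2P$ is used to keep the map $\Ex^2P\to T^2P$ a monomorphism after applying $M\otimes_\Z(-)$ and $(-)_G$.
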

\begin{proof}
(1) First part of the lemma holds, since
\[H_0(G; M \otimes P) = M \otimes_\ZG (\ZG \otimes_\ZF \f) = M \otimes_\ZF \f.\]

(2) Note that since \(G\) has no \(2\)-torsion, \(S^2(P)\) is a free \(\ZG\)-module; see \cite[Lemma~4.1]{hartley_homology_1991}.
Write the Koszul sequence for \(S^2(P)\) over \(\Z\):
\begin{equation}\label{eq:ex2p_flat}
\Ex^2(P) \mono P \otimes P \epi S^2(P).    
\end{equation}
Since it consists of flat \(\ZG\)-modules, we can tensor it over \(\ZG\) with \(M\) to obtain the exact sequence
\[(M \otimes \Ex^2(P))_G \mono (M \otimes P \otimes P)_G \epi (M \otimes S^2(P))_G.\]
Note that the middle term is split monoadditive by part (1). The result follows.

(3) Tensor the short exact sequence of abelian groups \eqref{eq:relation_module} with \(M\) over \(\ZG\) to obtain the exact sequence
\[(M \otimes R_\ab)_G \mono (M \otimes P)_G \epi (M \otimes \g)_G.\]
The middle term is split monoadditive by part (1). The result follows.
\end{proof}

The proof of the next proposition is technical and has been deferred to Appendix~\ref{app:1}.
\begin{prop}\label{prop:s2_p_monoadditive}
Let \(G\) be a \(2\)-torsionless group. Then for every \(n \ge 0\) and \(\Z\)-free functorial \(\ZG\)-module \(M\) the functor
\(H_0(G; \Ex^n R_\ab \otimes S^2(P) \otimes M)\) is monoadditive.
\end{prop}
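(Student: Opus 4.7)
The plan is to combine the identity $\pi \circ \sigma = 2 \cdot \mathrm{id}_{S^2(P)}$, where $\sigma \colon S^2(P) \to P \otimes P$ is the natural shuffle $x \circ y \mapsto x \otimes y + y \otimes x$ (so $x^{\circ 2} \mapsto 2\,x\otimes x$) and $\pi \colon P \otimes P \twoheadrightarrow S^2(P)$ is the canonical projection, together with the split monoadditivity of the $P^{\otimes 2}$-term and a torsion-freeness observation on the target.

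Tensoring $\sigma$ and $\pi$ over $\Z$ with $\Ex^n R_\ab \otimes M$ and taking $G$-coinvariants produces natural maps
\[\bar\sigma \colon (\Ex^n R_\ab \otimes S^2(P) \otimes M)_G \longrightarrow (\Ex^n R_\ab \otimes P^{\otimes 2} \otimes M)_G\]
and $\bar\pi$ in the opposite direction, with $\bar\pi \circ \bar\sigma = 2 \cdot \mathrm{id}$. Since the middle term is split monoadditive by Lemma~\ref{lem:h0_m_o_p_monoadditive}(1), its limit vanishes; passing to $\lim$ we therefore obtain $\bar\sigma_\ast = 0$, and hence multiplication by $2$ annihilates $\lim\,(\Ex^n R_\ab \otimes S^2(P) \otimes M)_G$.

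To close the argument, it suffices to show that this limit is $\Z$-torsion-free. For $2$-torsionless $G$ the module $S^2(P)$ is a free $\ZG$-module (Hartley's lemma, as already used in Lemma~\ref{lem:h0_m_o_p_monoadditive}). Since $\Ex^n R_\ab \otimes M$ is $\Z$-free (because $R_\ab$ is a free abelian group and $M$ is $\Z$-free by hypothesis), the standard twist isomorphism $\ZG \otimes_\Z Y \cong \ZG \otimes_\Z Y^{\mathrm{tr}}$ shows that $\Ex^n R_\ab \otimes S^2(P) \otimes M$ is itself a free $\ZG$-module at each object of $\Pres(G)$, so its $G$-coinvariants are $\Z$-free. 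Computing $\lim$ via the standard complex embeds $\lim\,(\Ex^n R_\ab \otimes S^2(P) \otimes M)_G$ into such a torsion-free abelian group; being torsion-free and annihilated by $2$, it must vanish.

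The main technical content is bookkeeping: verifying cleanly the naturality of the identity $\pi\sigma = 2$ through the tensor and coinvariant operations, and checking the $\ZG$-freeness of the triple tensor product via the twist trick. Both steps are routine but detailed, which is presumably why the proof is relegated to the appendix.
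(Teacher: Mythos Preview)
Your argument is correct and is considerably more streamlined than the paper's own proof in Appendix~\ref{app:1}. The paper proceeds by a case split on \(n\): for \(n=0\) it passes through the divided power \(\Gamma^2(P)\) and the sequences \(S^2(P)\mono\Gamma^2(P)\epi P\otimes\Z/2\) and \(\Gamma^2(P)\mono P\otimes P\epi S^2(P)\); for \(n=1\) it invokes Lemma~\ref{lem:h0_m_o_p_monoadditive}(3); and for \(n\ge2\) it uses the Koszul complex for \(S^n R_\ab\) together with further juggling of the short exact sequence~\eqref{eq:ex2p_flat} and Lemma~\ref{lem:h0_m_o_p_monoadditive}(2),(3). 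Your approach avoids the case split entirely: the identity \(\pi\sigma=2\) forces \(\lim\) to be \(2\)-torsion, while freeness of \(S^2(P)\) (via the \(2\)-torsionless hypothesis) plus \(\Z\)-freeness of \(\Ex^n R_\ab\otimes M\) make the coinvariants at each presentation a free abelian group, so \(\lim = \inv\) is torsion-free and hence zero. What your route buys is uniformity in \(n\) and a very short argument; what the paper's route buys is that its ingredients (Koszul, \(\Gamma^2\), and Lemma~\ref{lem:h0_m_o_p_monoadditive}) are reused elsewhere, so there is some economy across the paper. Your observation that ``\(2\)-torsion \(+\) torsion-free \(\Rightarrow\) zero'' is in fact the conceptual reason the \(2\)-torsionless hypothesis enters this proposition, which the paper's proof somewhat obscures.
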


We now recall an important result that underlies many of the findings presented in subsequent sections.
\begin{fact}[{see \cite[Theorem~5.1]{hartley_homology_1991}}]\label{fact:symmetric_power_homologies_torsion}
Let \(n \ge 2\), and let \(G\) be a group. Let \(M\) be a \(\Z\)-free \(\ZG\)-module. Then the homology groups \(H_k(G; S^n(\g) \otimes M)\) are periodic of exponent \(2n(n-1)\) for \(k \ge 1\), and \(H_0(G; S^n(\g) \otimes M)\) is a direct sum of a free abelian group and a periodic group of exponent \(2n(n-1)\).
\end{fact}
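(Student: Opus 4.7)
The plan is to build $S^n(\g) \otimes M$ from well-understood building blocks via the Koszul resolution associated with the relation sequence $R_\ab \mono P \epi \g$, and then transport torsion bounds through the resulting long exact sequences in group homology.

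First, I would apply the first Koszul sequence of Fact~2.5 (valid since $R_\ab$ is $\Z$-free and $P$ is free as a $\ZG$-module, hence $\Z$-free) to obtain a length-$n$ exact sequence
\[\Ex^n R_\ab \mono \Ex^{n-1} R_\ab \otimes P \to \cdots \to S^{n-1} P \otimes R_\ab \to S^n P \epi S^n \g.\]
Because $M$ is $\Z$-free by hypothesis, tensoring this sequence over $\Z$ with $M$ preserves exactness, producing an acyclic complex resolving $S^n(\g) \otimes M$ by terms $K_k \otimes M$ with $K_k = \Ex^{n-k}(R_\ab) \otimes S^k(P)$, $0 \le k \le n$.

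Second, I would bound the group homology of each $K_k \otimes M$ for $k \ge 1$. The key observation is that $T^k(P)$ under the diagonal $G$-action is a \emph{free} $\ZG$-module: if $P = \ZG^{(I)}$, then the diagonal action of $G$ on the tensor basis indexed by $I^k \times G^k$ is free on the first $G$-coordinate, so $T^k P \cong \ZG \otimes \Z[I^k \times G^{k-1}]$ as $\ZG$-modules. By Shapiro's lemma, $H_*(G; T^k P \otimes T^{n-k} R_\ab \otimes M)$ vanishes in positive degrees. Now invoke the symmetrization and antisymmetrization sandwiches from Section~2.1: the compositions $S^k \hookrightarrow T^k \twoheadrightarrow S^k$ and $\Ex^{n-k} \hookrightarrow T^{n-k} \twoheadrightarrow \Ex^{n-k}$ are multiplication by $k!$ and $(n-k)!$ respectively. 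Composing these shows that $H_*(G; K_k \otimes M)$ in positive degrees is annihilated by a divisor of $k!(n-k)!$.

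Third, I would splice the Koszul resolution into short exact sequences $L_{k+1} \mono K_k \epi L_k$ and iterate the long exact sequence in $H_*(G; - \otimes M)$. Dimension shifting propagates the torsion annihilation from each $K_k$ with $k \ge 1$ onto $S^n(\g) \otimes M$ in all positive degrees. The term $K_0 = S^n P$ is what contributes the free abelian group summand of $H_0(G; S^n(\g) \otimes M)$: indeed, $(S^n P)_G$ is a free abelian group (since $P$ is a free $\ZG$-module), and the connecting map from $H_1(G; L_1 \otimes M)$ — which is torsion by the previous paragraph — produces the torsion quotient.

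The hard part will be sharpening the crude naive exponent $\mathrm{lcm}\{k!(n-k)! : 1 \le k \le n\}$ obtained above to the precise bound $2n(n-1)$. Doing so requires a refined analysis of the Koszul differentials themselves: one must identify the explicit integer divisibilities already built into the boundary maps (essentially, the Koszul boundary factors through norm-type averaging operators that are themselves divisible by smaller integers) and iterate this through the splicing so that the successive multiplications by the $k!(n-k)!$ collapse into $2n(n-1)$. This combinatorial refinement is, as far as I can see, where the bulk of Hartley's original argument resides; I would expect to invoke an identity relating elementary symmetric functions and power sums, or equivalently an operadic identity in the symmetric group algebra $\Z S_n$, to extract the sharp exponent.
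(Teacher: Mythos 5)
This statement is cited as a Fact in the paper and is not proved there at all: the authors invoke Hartley and St\"ohr, Theorem 5.1 of \cite{hartley_homology_1991}. So there is no ``paper's proof'' to compare against, and a blind reconstruction would have to reproduce the argument from that reference.

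Your structural scaffolding -- Koszul resolution of $S^n(\g)$ via the relation sequence, $\Z$-flatness to tensor with $M$, freeness of $T^k(P)$ under the diagonal action, and the multiplication-by-$k!$ sandwiches $S^k \hookrightarrow T^k \twoheadrightarrow S^k$ and $\Ex^{n-k} \hookrightarrow T^{n-k} \twoheadrightarrow \Ex^{n-k}$ -- is a sound outline of the ambient setup, and the bound $k!(n-k)!$ on the positive-degree homology of each interior Koszul term is correct. But there are two genuine gaps. First, the splicing step does not merely accumulate an $\mathrm{lcm}$: when the intermediate terms $K_k$ have nonzero (though bounded-exponent) positive-degree homology, the long exact sequences give connecting maps whose kernels and cokernels are killed by $k!(n-k)!$, and composing these shifts a priori multiplies the annihilators rather than taking their $\mathrm{lcm}$; and the final syzygy $\Ex^n R_\ab$ is not controlled by your sandwich at all. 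Second, and more importantly, you explicitly concede that you do not know how to descend from the crude factorial bound to the precise exponent $2n(n-1)$. That sharpening is the entire content of Hartley--St\"ohr's Theorem 5.1; it rests on a delicate inductive analysis of the Koszul differentials and is not recoverable from ``an operadic identity in $\Z S_n$'' without substantial further work. The direct-sum claim for $H_0$ is likewise asserted rather than argued -- surjectivity of $(S^n P \otimes M)_G \twoheadrightarrow (S^n(\g) \otimes M)_G$ does not by itself produce a splitting into free plus bounded torsion. As it stands, the proposal is a plausible sketch of the framework but stops short of the theorem's substance.
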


\begin{cor}\label{cor:basic_lim_ext_rel}
Let \(n \ge 2\), \(G\) be a \(n!\)-torsionless group, \(M\) be a \(\Z\)-free \(\ZG\)-module. Then one has
\[\lim^i \, (\Ex^n R_\ab \otimes M)_G = H_{n-i}(G; S^n(\g) \otimes M), \quad i = 0, 1,\]
and this group is torsion of exponent dividing \(2n(n-1)\).
\end{cor}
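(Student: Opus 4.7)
The plan is to deduce this corollary as a direct application of Proposition~\ref{prop:lim_ex_sym_shift} with $k = 1$, supplemented by Fact~\ref{fact:symmetric_power_homologies_torsion} for the torsion statement. The work reduces to checking the three vanishing hypotheses that the shifting proposition demands in this range.

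The first step is to unpack the conditions of Proposition~\ref{prop:lim_ex_sym_shift} for $k = 1$: the indices $(i, j)$ with $0 \le i \le 1$ and $1 \le j \le 2 - i$ yield the three required vanishings
\[\lim^0 (\Ex^{n-1} R_\ab \otimes P \otimes M)_G = 0, \quad \lim^1 (\Ex^{n-1} R_\ab \otimes P \otimes M)_G = 0,\]
\[\lim^0 (\Ex^{n-2} R_\ab \otimes S^2 P \otimes M)_G = 0.\]
For the first two I would set the auxiliary functorial $\ZG$-module to be $M' = \Ex^{n-1} R_\ab \otimes M$ and invoke Lemma~\ref{lem:h0_m_o_p_monoadditive}(1), which declares $H_0(G; M' \otimes P)$ to be split monoadditive; consequently all of its higher limits, including $\lim^0$ and $\lim^1$, vanish. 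For the third vanishing I would apply Proposition~\ref{prop:s2_p_monoadditive} with its parameter taken to be $n-2 \ge 0$: this is legitimate since $G$ is $n!$-torsionless with $n \ge 2$, hence in particular $2$-torsionless, and $M$ is $\Z$-free by assumption. Monoadditivity of $H_0(G; \Ex^{n-2} R_\ab \otimes S^2(P) \otimes M)$ is precisely the vanishing of its $\lim^0$.

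With these three vanishings in place, Proposition~\ref{prop:lim_ex_sym_shift} with $k = 1$ produces at once the isomorphisms $\lim^i (\Ex^n R_\ab \otimes M)_G \cong H_{n-i}(G; S^n(\g) \otimes M)$ for $i = 0, 1$. The torsion bound is then immediate from Fact~\ref{fact:symmetric_power_homologies_torsion}: the hypothesis $n \ge 2$ guarantees $n - i \ge 1$ for $i \in \{0, 1\}$, so the homology groups $H_{n-i}(G; S^n(\g) \otimes M)$ lie in the range $k \ge 1$ where the fact applies, giving exponent dividing $2n(n-1)$.

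There is no substantive obstacle; the result is essentially a packaging of machinery assembled in the preceding subsection. The only item requiring minor care is the bookkeeping of hypotheses, confirming that $n!$-torsionlessness of $G$ specializes to the $2$-torsionlessness needed by Proposition~\ref{prop:s2_p_monoadditive}, that $\Z$-freeness of $M$ propagates correctly through the tensor factors entering Lemma~\ref{lem:h0_m_o_p_monoadditive}(1) and Proposition~\ref{prop:s2_p_monoadditive}, and that the index range $i \in \{0, 1\}$ remains inside the region where Fact~\ref{fact:symmetric_power_homologies_torsion} applies.
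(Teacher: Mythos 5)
Your proof is correct and follows precisely the same route as the paper's: apply Proposition~\ref{prop:lim_ex_sym_shift} with $k=1$, verify the three required vanishings via Lemma~\ref{lem:h0_m_o_p_monoadditive}(1) (for the $j=1$ conditions, with all higher limits killed by split monoadditivity) and Proposition~\ref{prop:s2_p_monoadditive} (for the $j=2$ condition), then read off the exponent bound from Fact~\ref{fact:symmetric_power_homologies_torsion} in the range $n-i\ge 1$. The only difference is that you spell out the index bookkeeping more explicitly than the paper does.
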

\begin{proof}
We want to verify the vanishing of the following limits:
\begin{align*}
&\lim \, (\Ex^{n - 1} R_\ab \otimes P \otimes M)_G = 0, \\
&\lim^1 \, (\Ex^{n - 1} R_\ab \otimes P \otimes M)_G = 0, \\
&\lim \, (\Ex^{n - 2} R_\ab \otimes S^2(P) \otimes M)_G = 0.
\end{align*}
First two limits vanish by Lemma~\ref{lem:h0_m_o_p_monoadditive} (1). The last limit vanishes by Proposition~\ref{prop:s2_p_monoadditive}.

The isomorphism follows from Proposition~\ref{prop:lim_ex_sym_shift}.
The exponent bounds follow from Fact~\ref{fact:symmetric_power_homologies_torsion}.
\end{proof}
\begin{prop}\label{prop:lim_i_ex_rel_modulo_torsion}
Let \(n \ge 2\), and let \(G\) be a group without \(n!\)-torsion, \(M\) be a \(\Z\)-free \(\ZG\)-module. Then one has for \(0 \le i \le n\):
\[\lim^i \, (\Ex^n R_\ab \otimes M)_G \otimes \Z[1/n!] = \begin{cases}
F((S^n(\g) \otimes M)_G) \otimes \Z[1/n!], & i = n, \\
0, & i \ne n,
\end{cases}\]
where \(F(A) = A/tA\) is a free part of an abelian group \(A\).
\end{prop}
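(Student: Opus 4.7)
The strategy is to apply Proposition~\ref{prop:lim_ex_sym_shift} with $k = n$ after localizing everything at $\Z[1/n!]$. Since $\Z[1/n!]$ is flat over $\Z$, this localization commutes with $\lim^\smallbullet$, with $\otimes_\Z$, and with coinvariants $(-)_G$, so we may freely pass to the category of $\Z[1/n!][G]$-modules. Assuming the vanishing hypotheses of Proposition~\ref{prop:lim_ex_sym_shift} hold in this setting, the proposition yields the isomorphism
\[
\lim^i (\Ex^n R_\ab \otimes M)_G \otimes \Z[1/n!] \simeq H_{n-i}(G; S^n(\g) \otimes M) \otimes \Z[1/n!]
\]
for $0 \le i \le n$ and vanishing for $i > n$; the final answer then follows from Fact~\ref{fact:symmetric_power_homologies_torsion}.

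The key step is verifying, for $1 \le j \le n$ and $0 \le i \le n - j + 1$, that
\[
\lim^i (\Ex^{n-j} R_\ab \otimes S^j P \otimes M)_G \otimes \Z[1/n!] = 0.
\]
For $j = 1$ this holds even integrally: Lemma~\ref{lem:h0_m_o_p_monoadditive}(1), applied with $\Ex^{n-1} R_\ab \otimes M$ in place of $M$, exhibits $(\Ex^{n-1} R_\ab \otimes P \otimes M)_G$ as split monoadditive, so all its higher limits vanish. For $j \ge 2$ the decisive observation is that the composition $S^j P \to P^{\otimes j} \to S^j P$ of the symmetrization with the canonical quotient equals multiplication by $j!$, which is a unit in $\Z[1/n!]$ for $j \le n$; both maps are natural in $P$, so $S^j P$ becomes a direct summand of $P^{\otimes j}$ in the category of functorial $\Z[1/n!][G]$-modules. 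Tensoring over $\Z[1/n!]$ with $\Ex^{n-j} R_\ab \otimes M$ and applying $(-)_G$ preserves this decomposition, so $(\Ex^{n-j} R_\ab \otimes S^j P \otimes M)_G \otimes \Z[1/n!]$ is a direct summand of $(\Ex^{n-j} R_\ab \otimes P^{\otimes j} \otimes M)_G \otimes \Z[1/n!]$. The ambient functor is split monoadditive by Lemma~\ref{lem:h0_m_o_p_monoadditive}(1), this time with $\Ex^{n-j} R_\ab \otimes P^{\otimes(j-1)} \otimes M$ playing the role of $M$; since all higher limits of a split monoadditive functor vanish, so do those of any direct summand.

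To finish, every prime divisor of $2n(n-1)$ is at most $n$ and hence a unit in $\Z[1/n!]$, so inverting $n!$ annihilates all $2n(n-1)$-torsion. By Fact~\ref{fact:symmetric_power_homologies_torsion}, for $i < n$ the group $H_{n-i}(G; S^n(\g) \otimes M)$ is periodic of exponent dividing $2n(n-1)$, giving $\lim^i (\Ex^n R_\ab \otimes M)_G \otimes \Z[1/n!] = 0$ in that range. For $i = n$ the same fact says $(S^n(\g) \otimes M)_G$ decomposes as a free abelian group plus a $2n(n-1)$-periodic piece, and only the free part $F((S^n(\g) \otimes M)_G) \otimes \Z[1/n!]$ survives localization.

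The main subtlety is bookkeeping: one must confirm that the splitting $S^j P \subset P^{\otimes j}$ is natural as a transformation of functors on $\Pres(G)$ (immediate from the naturality of symmetrization and of the quotient map), and that Proposition~\ref{prop:lim_ex_sym_shift} transports cleanly to the localized base ring $\Z[1/n!]$ (its proof uses only the Koszul sequence, Shapiro's lemma, and the spectral sequence applications of Proposition~\ref{prop:spectral_application_1}, all of which are compatible with flat base change).
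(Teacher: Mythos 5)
Your proposal is correct and follows essentially the same route as the paper's own proof: localize at $\Z[1/n!]$, realize $S^j P$ as a direct summand of $T^j P$ via the (scaled) symmetrization map, conclude vanishing of the required limits from split monoadditivity of $(\Ex^{n-j} R_\ab \otimes T^j P \otimes M)_G$ via Lemma~\ref{lem:h0_m_o_p_monoadditive}(1), then invoke Proposition~\ref{prop:lim_ex_sym_shift} and Fact~\ref{fact:symmetric_power_homologies_torsion}. The only cosmetic difference is that the paper writes the section as $a_1 \circ \cdots \circ a_k \mapsto \tfrac{1}{k!}\sum_\sigma a_{\sigma(1)} \otimes \cdots$ directly, whereas you phrase it as the composite $S^j P \to T^j P \to S^j P$ being multiplication by $j!$ and hence invertible over $\Z[1/n!]$; these are the same splitting.
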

\begin{proof}
We notice that when we work over \(\Z[1/n!]\), we have split monomorphisms
\[S^k(P) \otimes \Z[1/n!] \mono T^k(P), \quad 1 \le k \le n,\]
given by \((a_1 \circ \ldots \circ a_k) \mapsto \tfrac{1}{k!} \sum_{\sigma \in S_k} a_{\sigma(1)} \otimes \ldots \otimes a_{\sigma(k)}\). Hence, we have split monomorphisms
\[(\Ex^{n-k} R_\ab \otimes S^k(P) \otimes M)_G \otimes \Z[1/n!] \mono (\Ex^{n-k} R_\ab \otimes T^k(P) \otimes M)_G \otimes \Z[1/n!],\]
and the functor on the right is split monoadditive by Lemma~\ref{lem:h0_m_o_p_monoadditive} (1).
Hence, Proposition~\ref{prop:lim_ex_sym_shift} applies, and
\[\lim^i \, (\Ex^n R_\ab \otimes M)_G \otimes \Z[1/n!] = \begin{cases}
H_{n-i}(G; S^n(\g) \otimes M) \otimes \Z[1/n!], & i \le n, \\
0, & i > n.
\end{cases}\]
The result follows from Fact~\ref{fact:symmetric_power_homologies_torsion}.
\end{proof}

\begin{lem}\label{lem:lim_commutes_with_torsion}
Let \(\C\) be a category, and \(M \colon \C \to \Ab\) be some functor.
\begin{enumerate}
\item One has \(\lim \, tM = t(\lim \, M)\).
\item If \(\lim\, M \otimes \Q = 0\), then
\(\lim \, M\) is torsion.
\end{enumerate}
\end{lem}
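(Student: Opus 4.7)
The plan is as follows. For part (1), I form the canonical short exact sequence of functors \(0 \to tM \to M \to M/tM \to 0\), where \(tM\) denotes the pointwise torsion subfunctor, and apply the left-exact functor \(\lim\). This yields the exact sequence \(0 \to \lim (tM) \to \lim M \to \lim (M/tM)\). Since \(M/tM\) is pointwise torsion-free, \(\lim(M/tM)\) sits inside the product \(\prod_{c \in \C} (M/tM)(c)\) of torsion-free abelian groups, and is therefore itself torsion-free. This gives the inclusion \(t(\lim M) \subseteq \lim(tM)\) at once, because every torsion element in the middle term must map to zero in the torsion-free right-hand term.

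For the reverse inclusion \(\lim(tM) \subseteq t(\lim M)\), I plan to use naturality of compatible families. An element of \(\lim(tM)\) is a compatible family \((x_c)_{c \in \C}\) in which each \(x_c\) is individually torsion; what is needed is a single integer \(n > 0\) annihilating the whole family uniformly. Fixing any \(c_0 \in \C\) and taking \(n\) to be the order of \(x_{c_0}\), the compatibility relation \(x_c = M(\varphi)(x_{c_0})\) for any morphism \(\varphi : c_0 \to c\) forces \(n x_c = M(\varphi)(n x_{c_0}) = 0\). This argument goes through provided every object \(c\) admits a morphism from \(c_0\), which is the case in the main categories of interest in the paper (e.g.\ \(\C = \Pres(G)\), which is strongly connected).

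For part (2), I invoke the elementary equivalence that an abelian group \(A\) satisfies \(A \otimes_\Z \Q = 0\) if and only if \(A\) is torsion; this holds because \(A \otimes \Q\) is the localization \(S^{-1}A\) at \(S = \Z \setminus \{0\}\), which vanishes exactly when every element of \(A\) is annihilated by some nonzero integer. Applying this with \(A = \lim M\) yields the conclusion.

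The main obstacle I anticipate is the reverse inclusion in (1): without a structural hypothesis on \(\C\) such as strong connectedness, it can fail (for example with a discrete \(\C\) and \(M(n) = \Z/n\), where \(\lim(tM) = \prod_n \Z/n\) is strictly larger than its torsion subgroup). The cleanest proof therefore reads off the needed connectedness from the ambient setting in which the lemma is applied; the remaining steps are then a routine invocation of left-exactness of \(\lim\) together with the fact that a product of torsion-free groups is torsion-free.
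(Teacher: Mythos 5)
Your argument for part (1) is correct (given the strong connectedness hypothesis you rightly observe is needed, though the lemma as stated suppresses it), and takes a genuinely different route from the paper. The paper applies $\lim$ to the exact sequence $tM \mono M \to M \otimes \Q$ and uses that $\lim$ commutes with $\otimes \Q$ --- valid because over a strongly connected category with binary coproducts the limit is computed as a kernel, a finite limit, which commutes with the filtered colimit $- \otimes \Q$ --- so that $\lim\, tM = \ker(\lim M \to (\lim M) \otimes \Q) = t(\lim M)$ in one stroke. You instead split the equality into two inclusions via $tM \mono M \epi M/tM$ plus a pointwise order-bounding argument for the reverse containment; that is more hands-on and has the merit of making the dependence on the shape of $\C$ explicit, which the paper leaves implicit.

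For part (2), however, there is a genuine gap. You interpret the hypothesis as $(\lim M) \otimes \Q = 0$, under which the statement is a triviality about a single abelian group and would hardly merit a lemma. The intended reading, which the paper's proof and its applications (e.g.\ Proposition~\ref{prop:hn_localized_away_2_kuzmin}) make unambiguous, is $\lim(M \otimes \Q) = 0$: the rationalized \emph{functor} has vanishing limit. From this one must still deduce that $\lim M$ is torsion, which is not immediate. Within your framework the repair is short: the natural map $M/tM \to M \otimes \Q$ is a monomorphism of functors (since $M/tM$ is pointwise torsion-free), so $\lim(M \otimes \Q) = 0$ forces $\lim(M/tM) = 0$ by left exactness of $\lim$; then the exact sequence $0 \to \lim\, tM \to \lim M \to \lim(M/tM)$ gives $\lim M = \lim\, tM$, which equals $t(\lim M)$ by part (1), so $\lim M$ is torsion.
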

\begin{proof}
Consider an exact sequence
\[tM \mono M \to M \otimes \Q.\]
The result follows, since the functor \(\lim\) is left exact and commutes with localizations.
\end{proof}

\begin{prop}\label{prop:lim_ex_mod_p}
Let \(p\) be a prime, and \(1 \le r \le p\) be an integer. Let \(G\) be a group without \((rp)!\)-torsion.
\begin{enumerate}
\item If \(r < p\) and \(i = 0, 1\), then
\[\lim^i \, (\Ex^{rp} R_\ab)_G \otimes \Z_{(p)} = H_{r(p+2)-i}(G; \Z/p\Z).\]
\item If \(r = p\), then
\[\lim \, (\Ex^{p^2} R_\ab)_G \otimes \Z_{(p)} = H_{p^2+2}(G; \Z/p\Z) \oplus H_{p^2+2p}(G; \Z/p\Z).\]
\end{enumerate}
\end{prop}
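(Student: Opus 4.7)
The plan is to use the dimension-shifting formula of Corollary~\ref{cor:basic_lim_ext_rel} and then identify the resulting $p$-local homology of $S^{rp}(\g)$ with shifted mod-$p$ homology of $G$.

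First, apply Corollary~\ref{cor:basic_lim_ext_rel} with $n = rp$ and $M = \Z$ equipped with trivial $G$-action. Since $1\le r\le p$ and $G$ is $(rp)!$-torsionless by hypothesis, this yields
\[\lim^i\,(\Ex^{rp} R_\ab)_G \;\cong\; H_{rp-i}(G;\, S^{rp}(\g)),\qquad i=0,1.\]
Because $\Z_{(p)}$ is $\Z$-flat, both higher limits and group homology commute with localization, so after tensoring with $\Z_{(p)}$ we obtain
\[\lim^i\,(\Ex^{rp} R_\ab)_G \otimes \Z_{(p)} \;\cong\; H_{rp-i}(G;\, S^{rp}(\g)) \otimes \Z_{(p)}.\]
By Fact~\ref{fact:symmetric_power_homologies_torsion} the right-hand side is annihilated by $2rp(rp-1)$; the $p$-adic valuation of this bound equals $1+v_p(r)$, which is $1$ when $1\le r<p$ and $2$ when $r=p$. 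In particular, in case~(1) the groups in question are already annihilated by $p$, whereas in case~(2) they are a priori $\Z/p^2$-modules.

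The principal remaining step is to establish the identification
\[H_n(G;\, S^{rp}(\g))\otimes\Z_{(p)} \;\cong\; H_{n+2r}(G;\Z/p)\]
for $1\le r<p$, which after substituting $n = rp-i$ gives part~(1); for $r=p$ one needs the splitting
\[H_{p^2}(G;\, S^{p^2}(\g))\otimes\Z_{(p)} \;\cong\; H_{p^2+2}(G;\Z/p)\oplus H_{p^2+2p}(G;\Z/p),\]
yielding part~(2). I would derive these by combining a Koszul-type filtration of $S^{rp}(\g)$ (analogous to that in the proof of Lemma~\ref{lem:hmlgy_ex_sym_shift}) with the classical $p$-local analysis of symmetric-power homologies by Hartley \cite{hartley_homology_1991} and St\"ohr \cite{stohr_homology_1993}, together with the Bockstein spectral sequence attached to $0 \to \Z/p \to \Z/p^2 \to \Z/p \to 0$.

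The main obstacle is this third step, and specifically the case $r=p$: the exponent bound $p^2$ shows only that the homology is a $\Z/p^2$-module, and deducing that it actually splits as a direct sum of two mod-$p$ groups placed in distinct degrees $p^2+2$ and $p^2+2p$ requires locating two non-interacting filtration layers in the Koszul complex and verifying that the corresponding Bockstein connecting maps vanish. For $r<p$ only a single filtration layer survives after $p$-localization and the analysis is considerably simpler.
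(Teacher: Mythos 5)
Your steps (1) and (2) reproduce the paper's argument: the paper also invokes Corollary~\ref{cor:basic_lim_ext_rel} for the identification $\lim^i\,(\Ex^{rp}R_\ab)_G = H_{rp-i}(G;S^{rp}(\g))$ ($i=0,1$), notes (via Proposition~\ref{prop:lim_i_ex_rel_modulo_torsion}) that this group is torsion, and uses the elementary equality $t(A\otimes\Z_{(p)}) = t_p(A)$ to reduce to computing the $p$-primary part of $H_{rp-i}(G;S^{rp}(\g))$.

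The gap is your step (3). The required identification of the $p$-primary part of $H_*(G;S^{rp}(\g))$ with (a sum of) mod-$p$ homology groups of $G$ is precisely the content of Hartley--St\"ohr, \cite[Proposition~3.2, Theorem~1]{hartley_note_1991}, which is the reference the paper cites and relies on; you do not cite it (you cite the separate papers \cite{hartley_homology_1991} and \cite{stohr_homology_1993}, neither of which contains that exact result), and you explicitly acknowledge that you cannot carry out the derivation you sketch. As written, what you propose for step (3) -- a Koszul filtration of $S^{rp}(\g)$ combined with a Bockstein spectral-sequence argument -- is not a proof but an announcement of a plan; moreover the Bockstein spectral sequence is not the natural tool here, since the bidegree splitting for $r=p$ in Hartley--St\"ohr's Theorem~1 is produced by identifying specific natural subquotients of $S^{p^2}$ at $p$, not by resolving a $\Z/p^2$-extension. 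In short, you should simply cite \cite[Proposition~3.2, Theorem~1]{hartley_note_1991} at the point where you introduce the putative identification, which both closes the gap and avoids overselling a Bockstein argument that would need significant new work.

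A small numerical slip: the $p$-adic valuation of the exponent bound $2rp(rp-1)$ equals $1+v_p(r)$ only when $p$ is odd; for $p=2$ the factor $2$ contributes as well (e.g.\ for $p=2$, $r=1$ the bound is $4$, of $2$-adic valuation $2$, not $1$). This does not affect the final statement -- which is driven by the Hartley--St\"ohr computation, not by this crude bound -- but the sentence as written is false for $p=2$.
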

\begin{proof}
Note that \(\lim^i \, (\Ex^{rp} R_\ab)_G = H_{rp-i}(G; S^{rp}(\g))\) is a torsion group by the previous proposition. Next, for every abelian group \(A\) one has an equality \(t(A \otimes \Z_{(p)}) = t_p(A)\), where \(t(-)\) denotes the torsion part, and \(t_p(-)\) denotes its \(p\)-primary component. Hence, by Lemma~\ref{lem:lim_commutes_with_torsion} (1) we have
\[\lim^i \, (\Ex^{rp} R_\ab)_G \otimes \Z_{(p)} = t_p H_{rp-i}(G; S^{rp}(\g)).\]
These groups are computed in \cite[Proposition~3.2, Theorem~1]{hartley_note_1991}.
\end{proof}

\begin{ex}
In the following table we summarize our computations of \(\lim^i \, (\Ex^n R_\ab)_G\) for \(i = 0, 1\) and \(n \le 7\). The symbol \(t_k\) denotes an abelian group of exponent \(k\).
\begin{table}[H]
\resizebox{\textwidth}{!}{%
\begin{tabular}{|c|c|c|c|}
\hline
\(G\)               & \(\mathcal{F}\)     & \(\lim \, \mathcal{F}\)                         & \(\lim^1 \, \mathcal{F}\)              \\ \hline
any                 & \((R_\ab)_G\)       & \(H_2(G)\)                                      & \(H_1(G)\)                             \\ \hline
\(2\)-torsionless   & \((\Ex^2 R_\ab)_G\) & \(H_4(G; \Z/2)\)                                & \(H_3(G; \Z/2)\)                       \\ \hline
\(6\)-torsionless   & \((\Ex^3 R_\ab)_G\) & \(t_2 \oplus H_5(G; \Z/3)\)                     & \(t_2 \oplus H_4(G; \Z/3)\)            \\ \hline
\(24\)-torsionless   & \((\Ex^4 R_\ab)_G\) & \(t_3 \oplus H_6(G; \Z/2) \oplus H_8(G; \Z/2)\) & \(t_2 \oplus t_3\)                                      \\ \hline
\(120\)-torsionless  & \((\Ex^5 R_\ab)_G\) & \(t_2 \oplus H_7(G; \Z/5)\)                     & \(t_2 \oplus H_6(G; \Z/5)\)            \\ \hline
\(720\)-torsionless  & \((\Ex^6 R_\ab)_G\) & \(t_2 \oplus t_5 \oplus H_{10}(G; \Z/3)\)       & \(t_2 \oplus t_5 \oplus H_9(G; \Z/3)\) \\ \hline
\(5040\)-torsionless & \((\Ex^7 R_\ab)_G\) & \(t_2 \oplus t_3 \oplus H_9(G; \Z/7)\)          & \(t_2 \oplus t_3 \oplus H_8(G; \Z/7)\) \\ \hline
\end{tabular}%
}
\end{table}
\end{ex}
\section{On homology of \(F/\gamma_m(R)\)}
\label{sec:quotients}

This section examines invariant properties of the functors
\[H_n(F/\gamma_m R) \colon \Pres(G)\to \Ab.\]
Our primary interest in these properties stems from their role in defining invariants that admit $\f\r_{\infty}$-codes (see Section~\ref{sec:fr_inf}) and are closely related to homology of groups with cyclic coefficients, as we recall in this section.

We will see that the edge cases \(m = 2\) and \(n = 2\) are most approachable, so we will consider them separately.

\subsection{\(H_*(F/\gamma_2(R))\)}
\label{subsec:kuzmin}
\begin{fact}[{\cite[Theorem~2, Theorem~3]{kuzmin_fnn_1987}}]\label{fact:kuzmin_ex_rel_gamma_2}
Let \(n \ge 2\), and \(G\) be a \(n!\)-torsionless group, and \(R \mono F \epi G\) be some free extension of it.
\begin{enumerate}
\item The inclusion \(R_\ab \mono F/\gamma_2(R)\) induces a homomorphism
\[(\Ex^n R_\ab)_G \simeq H_n(R_\ab)_G \to H_n(F/\gamma_2(R)).\]
\item If \(p\) is some odd prime dividing \(n\), then this homomoprhism becomes isomorphism after tensoring with \(\Z_{(p)}\):
\[(\Ex^n R_\ab)_G \otimes \Z_{(p)} \xrightarrow{\simeq} H_n(F/\gamma_2(R)) \otimes \Z_{(p)}.\]
\item This homomorphism becomes isomorphism rationally:
\[(\Ex^n R_\ab)_G \otimes \Q \xrightarrow{\simeq} H_n(F/\gamma_2(R)) \otimes \Q.\]
\end{enumerate}
\end{fact}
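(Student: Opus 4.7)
The plan is to extract the three statements from the homological Lyndon--Hochschild--Serre spectral sequence of the extension
\[R_\ab \mono F/\gamma_2(R) \epi G.\]
Since \(R\) is free, \(R_\ab\) is a free abelian group, so \(H_q(R_\ab) = \Ex^q R_\ab\) with the natural \(G\)-action by conjugation, and the spectral sequence reads
\[E^2_{p,q} = H_p(G;\, \Ex^q R_\ab) \Longrightarrow H_{p+q}(F/\gamma_2(R)).\]
Statement (1) then identifies the homomorphism under consideration with the composite
\[(\Ex^n R_\ab)_G = E^2_{0,n} \epi E^\infty_{0,n} = F_0 H_n(F/\gamma_2(R)) \mono H_n(F/\gamma_2(R))\]
arising from the fibre inclusion.

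To prove (2) and (3), I would show that after the appropriate localization one has simultaneously \(E^\infty_{p,\,n-p} = 0\) for all \(p \ge 1\) and \(E^2_{0,n} \epi E^\infty_{0,n}\) an isomorphism, so that the entire abutment concentrates in the bottom row. For the bulk of the \(E^2\)-page, namely terms with \(p \ge 1\) and \(q \ge 2\), I would apply Lemma~\ref{lem:hmlgy_ex_sym_shift} to the Magnus sequence \(R_\ab \mono P \epi \g\) (with \(P\) free over \(\ZG\)) to obtain
\[H_p(G;\, \Ex^q R_\ab) \simeq H_{p+q}(G;\, S^q \g),\]
and Fact~\ref{fact:symmetric_power_homologies_torsion} then bounds the exponent of the right-hand side by \(2q(q-1)\). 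Consequently these terms vanish rationally, and vanish after \(\Z_{(p)}\)-localization whenever \(p \nmid 2q(q-1)\) for all relevant \(q\); the arithmetic restriction in (2) (that \(p\) be odd and divide \(n\)) is tailored precisely so this condition holds throughout the anti-diagonal \(p + q = n\) and its immediate neighbours where the differentials live.

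The hard part will be the two boundary rows \(E^2_{p, 1} = H_p(G; R_\ab)\) and \(E^2_{p, 0} = H_p(G;\Z)\), which generally carry genuine rational information about \(G\) and fall outside the scope of Fact~\ref{fact:symmetric_power_homologies_torsion}. The hope is that the transgressions of the spectral sequence eliminate these two rows simultaneously. Concretely, the relation sequence \(R_\ab \mono P \epi \g\) induces a dimension-shifting identification \(H_p(G; R_\ab) \simeq H_{p+2}(G;\Z)\), and one expects the transgression \(d^2 \colon E^2_{p+2,\,0} \to E^2_{p,\,1}\) to coincide (up to sign) with this identification, annihilating both boundary rows on the \(E^3\)-page. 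Making this comparison precise, propagating the resulting vanishing through the higher differentials that are controlled by the torsion exponents from Fact~\ref{fact:symmetric_power_homologies_torsion}, and verifying that no stray differential targets \(E^r_{0,n}\), constitute the technical core of the argument; this is the content of Kuz'min's combinatorial analysis of free relation-abelianized quotients in \cite{kuzmin_fnn_1987}, which we import as a black box.
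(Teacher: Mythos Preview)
The paper does not prove this statement; it is recorded as Fact~\ref{fact:kuzmin_ex_rel_gamma_2} with a citation to \cite{kuzmin_fnn_1987} and invoked as a black box throughout. There is therefore no in-paper argument against which to compare your proposal.

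On the merits of the sketch itself: your treatment of (1) via the edge map of the Lyndon--Hochschild--Serre spectral sequence is correct and standard. For (2) and (3) the framework is the natural one, and you rightly identify the cancellation of the rows $q = 0$ and $q = 1$ (and the control of the higher differentials) as the substantive content, which you ultimately defer to Kuz'min --- exactly as the paper does. One caveat: your claim that the hypothesis ``$p$ odd and $p \mid n$'' forces $p \nmid 2q(q-1)$ for every $q$ on or near the antidiagonal is not correct as stated. Already for $n = 2p$ and $q = p$ one has $p \mid 2q(q-1)$, so even the ``bulk'' $E^2$-terms with $q \ge 2$ are not disposed of by the exponent bound of Fact~\ref{fact:symmetric_power_homologies_torsion} alone. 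Kuz'min's argument handles these terms, and the differentials among them, by a finer analysis than a crude exponent estimate; your sketch is honest about importing that, but the phrasing suggests the ``easy part'' is easier than it is.
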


\begin{prop}\label{prop:lim_hrp}
Let \(p\) be an odd prime, and \(1 \le r \le p\). Let \(G\) be a group without \((rp)!\)-torsion.
\begin{enumerate}
\item If \(r < p\) and \(i = 0, 1\), then
\[\lim^i \, H_{rp}(F/\gamma_2(R)) \otimes \Z_{(p)} = H_{r(p+2)-i}(G; \Z/p\Z).\]
\item If \(r = p\), then
\[\lim \, H_{p^2}(F/\gamma_2(R)) \otimes \Z_{(p)} = H_{p^2+2}(G; \Z/p\Z) \oplus H_{p^2+2p}(G; \Z/p\Z).\]
\end{enumerate}
\end{prop}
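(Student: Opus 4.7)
The plan is to reduce both statements directly to the computations of \(\lim^i\, (\Ex^{rp} R_\ab)_G \otimes \Z_{(p)}\) already carried out in Proposition~\ref{prop:lim_ex_mod_p}. The bridge between the exterior-power side and the homology-of-quotient side is provided by Kuz'min's theorem (Fact~\ref{fact:kuzmin_ex_rel_gamma_2}), which is precisely what makes the \(m=2\) case so tractable.

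First, I would set \(n = rp\) and apply Fact~\ref{fact:kuzmin_ex_rel_gamma_2}~(2). Since \(p\) is an odd prime dividing \(n\) and \(G\) is \(n!\)-torsionless by hypothesis, the natural map
\[(\Ex^{rp} R_\ab)_G \otimes \Z_{(p)} \xrightarrow{\simeq} H_{rp}(F/\gamma_2(R)) \otimes \Z_{(p)}\]
is an isomorphism of functors \(\Pres(G) \to \Ab\) (naturality comes from the fact that the underlying homomorphism in (1) is induced by the functorial inclusion \(R_\ab \mono F/\gamma_2(R)\)).

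Second, I would invoke the flatness of the localization \(\Z \to \Z_{(p)}\) together with the base-change statement for higher limits (part (2) of the proposition comparing \(\lim_{\BaseRing'}^\smallbullet\) and \(\lim_\BaseRing^\smallbullet \otimes \BaseRing'\)) to conclude that applying \(\lim^i\) commutes with \(- \otimes \Z_{(p)}\). Consequently,
\[\lim^i\, H_{rp}(F/\gamma_2(R)) \otimes \Z_{(p)} \;\simeq\; \lim^i\, (\Ex^{rp} R_\ab)_G \otimes \Z_{(p)}.\]

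Finally, I would plug in the values computed in Proposition~\ref{prop:lim_ex_mod_p}: for \(r < p\) and \(i \in \{0,1\}\) the right-hand side equals \(H_{r(p+2)-i}(G;\Z/p)\), yielding (1); for \(r = p\) and \(i = 0\) it equals \(H_{p^2+2}(G;\Z/p) \oplus H_{p^2+2p}(G;\Z/p)\), yielding (2). There is no genuine obstacle here — the entire proof is a formal concatenation of Fact~\ref{fact:kuzmin_ex_rel_gamma_2}, flat base change for \(\lim^\smallbullet\), and Proposition~\ref{prop:lim_ex_mod_p}; the real work has already been done in establishing those three ingredients.
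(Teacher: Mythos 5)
Your proof is correct and follows exactly the paper's approach: the paper's own proof is the one-line ``The result follows from Fact~\ref{fact:kuzmin_ex_rel_gamma_2}~(2) and Proposition~\ref{prop:lim_ex_mod_p}.'' You have simply spelled out the implicit flat base-change step for \(\lim^\smallbullet\) and the naturality of Kuz'min's isomorphism, both of which are correct and appropriate to make explicit.
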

\begin{proof}
The result follows from Fact~\ref{fact:kuzmin_ex_rel_gamma_2} (2) and Proposition~\ref{prop:lim_ex_mod_p}.
\end{proof}

For a polynomial \(f(x) = \sum_{k \ge 0} m_k x^k\) with nonnegative integral coefficients and for any \(\ZG\)-module \(A\) we set
\[fH_n(G; A) = \bigoplus_{k \ge 0} H_{n+k}(G; A)^{\oplus m_k}.\]
For \(n \ge 2\) and a prime \(p\) we define \textit{Kuz'min polynomial \(f^{(p)}_n(x)\)} as
\[f^{(p)}_n(x) = \begin{cases}
0, & n \not\equiv 0, 1 \;\mathrm{mod}\; p, \\
x^2, & n = p, \\
x \cdot f^{(p)}_{n-1}(x), & n \equiv 1 \;\mathrm{mod}\; p, \\
x^2 \cdot f^{(p)}_{n-p}(x) + f^{(p)}_{n/p}(x), & n \equiv 0 \;\mathrm{mod}\; p, \; n > p.
\end{cases}\]
\begin{prop}\label{prop:hn_localized_away_2_kuzmin}
Let \(n \ge 2\), and \(G\) be a group without \(n!\)-torsion. Then there is an isomorphism
\[\lim \, H_n(F/\gamma_2(R)) \otimes \Z[1/2] = \bigoplus_p f^{(p)}_n H_n(G; \Z/p\Z),\]
where the sum runs over odd primes \(p\) dividing \(n\).
\end{prop}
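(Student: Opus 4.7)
The plan is to reduce the computation to a prime-by-prime analysis after inverting $2$. Since higher limits commute with the flat base change $-\otimes \Z_{(p)}$, we may analyze $\lim H_n(F/\gamma_2(R)) \otimes \Z_{(p)}$ separately for each odd prime $p$ and then reassemble the pieces into the direct sum on the right-hand side. A preliminary observation takes care of the rational part: by Fact~\ref{fact:kuzmin_ex_rel_gamma_2}~(3) we have $H_n(F/\gamma_2(R)) \otimes \Q \cong (\Ex^n R_\ab)_G \otimes \Q$, and Corollary~\ref{cor:basic_lim_ext_rel} shows the limit of the latter is torsion, hence vanishes rationally. So no torsion-free contribution appears in $\lim H_n(F/\gamma_2(R)) \otimes \Z[1/2]$.

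The core of the argument is the computation of $\lim H_n(F/\gamma_2(R)) \otimes \Z_{(p)}$ for each odd prime $p$ dividing $n$, which I would carry out by strong induction on $n$ along the four cases of the recursive definition of $f^{(p)}_n$. The base case $n = p$ (where $f^{(p)}_p = x^2$) is Proposition~\ref{prop:lim_hrp}~(1) with $r = 1$: it yields $\lim H_p(F/\gamma_2(R)) \otimes \Z_{(p)} \cong H_{p+2}(G; \Z/p\Z) = x^2 H_p(G; \Z/p\Z)$. For the inductive step in the dominant case $n = rp$ with $r > 1$, I would invoke Kuz'min's structural theorem from~\cite{kuzmin_fnn_1987}, which decomposes $H_n(F/\gamma_2(R)) \otimes \Z_{(p)}$ (functorially in the presentation) into a piece identified via Fact~\ref{fact:kuzmin_ex_rel_gamma_2}~(2) with $(\Ex^{n} R_\ab)_G \otimes \Z_{(p)}$, plus a ``$p$-fold contraction'' piece identified with $H_{n/p}(F/\gamma_2(R)) \otimes \Z_{(p)}$. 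Applying $\lim$ and using Proposition~\ref{prop:lim_hrp} for the first summand (producing the $x^2 f^{(p)}_{n-p}$ term via Proposition~\ref{prop:lim_i_ex_rel_modulo_torsion} together with its $p$-local refinement) and the inductive hypothesis for the second (producing the $f^{(p)}_{n/p}$ term) recovers exactly the recursion $f^{(p)}_n = x^2 f^{(p)}_{n-p} + f^{(p)}_{n/p}$. The case $n \equiv 1 \pmod p$ is handled by a degree-shift inherited from the same decomposition and yields the $x f^{(p)}_{n-1}$ term, while the case $n \not\equiv 0, 1 \pmod p$ gives a trivial contribution because Kuz'min's pieces vanish.

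Throughout the induction I would propagate the $n!$-torsionless hypothesis on $G$, which is strong enough because all recursive invocations occur at smaller indices $n - p$, $n - 1$, $n/p$, so the hypothesis needed at each step is implied by the hypothesis at $n$. Lemma~\ref{lem:lim_commutes_with_torsion} is a convenient technical device for stripping away torsion-free summands when they arise in intermediate computations.

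The main obstacle will be extracting from Kuz'min's paper a decomposition that is \emph{functorial} on $\Pres(G)$ and whose summands line up precisely with the four cases of the recursion. Kuz'min's original statement is phrased abstractly in terms of module-theoretic presentations, and translating it into the functorial framework of this paper (so that $\lim$ can be applied termwise) is the technical heart of the argument. A secondary subtlety is the $n \equiv 1 \pmod p$ case, which ties $H_n$ to $H_{n-1}$ rather than to $H_{n/p}$; this degree-by-one shift seems to correspond to a Bockstein-type boundary in Kuz'min's analysis and needs to be identified carefully to extract the correct $x f^{(p)}_{n-1}$ contribution.
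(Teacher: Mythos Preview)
Your inductive strategy is far more elaborate than what the paper does, and the key decomposition you propose is incompatible with facts already recorded in the paper.

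The paper's argument is nearly a one-liner. By \cite[Corollary~1.1]{kovach_homology_1992}, the torsion subgroup $T_n$ of $H_n(F/\gamma_2(R))$ already satisfies
\[T_n \otimes \Z[1/2] \;\cong\; \bigoplus_p f^{(p)}_n H_n(G; \Z/p\Z)\]
as a \emph{constant} functor on $\Pres(G)$; the entire recursion defining $f^{(p)}_n$ is absorbed into that citation. It then only remains to see that $\lim H_n(F/\gamma_2(R))$ is torsion, which follows from Fact~\ref{fact:kuzmin_ex_rel_gamma_2}~(3), the monoadditivity of $(\Ex^n R_\ab)_G \otimes \Q$ (Proposition~\ref{prop:lim_i_ex_rel_modulo_torsion}), and Lemma~\ref{lem:lim_commutes_with_torsion}. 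No induction, no prime-by-prime splitting, no functoriality worries.

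Your inductive step cannot work as written. Fact~\ref{fact:kuzmin_ex_rel_gamma_2}~(2) states that for odd $p \mid n$ the map $(\Ex^n R_\ab)_G \otimes \Z_{(p)} \to H_n(F/\gamma_2(R)) \otimes \Z_{(p)}$ is an \emph{isomorphism}, so there is no room for an additional ``$p$-fold contraction'' summand $H_{n/p}(F/\gamma_2(R)) \otimes \Z_{(p)}$. Even granting some decomposition, your bookkeeping does not reproduce the recursion: the inductive hypothesis on the second piece would yield $f^{(p)}_{n/p}\, H_{n/p}(G;\Z/p)$, whereas the recursion $f^{(p)}_n = x^2 f^{(p)}_{n-p} + f^{(p)}_{n/p}$ requires the contribution $f^{(p)}_{n/p}\, H_{n}(G;\Z/p)$; for $n = p^2$ these differ ($H_{p+2}$ versus $H_{p^2+2}$). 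Finally, Propositions~\ref{prop:lim_hrp} and~\ref{prop:lim_ex_mod_p} are only stated for $1 \le r \le p$, so the ``first summand'' cannot be evaluated with the tools of this paper once $n > p^2$. If a correct functorial decomposition exists along the lines you suggest, establishing it would amount to reproving the Kovach--Kuz'min--St\"ohr theorem rather than invoking it.
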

\begin{proof}
Let \(T_n\) be a torsion part of \(H_n(F/\gamma_2(R))\).

In \cite[Corollary~1.1]{kovach_homology_1992}, it is proved that
\[T_n \otimes \Z[1/2] \simeq \bigoplus_p f^{(p)}_n H_n(G; \Z/p\Z).\]
By Fact~\ref{fact:kuzmin_ex_rel_gamma_2} (3), we have \(H_n(F/\gamma_2(R)) \otimes \Q \simeq (\Ex^n R_\ab)_G \otimes \Q\). This functor is monoadditive by Proposition~\ref{prop:lim_i_ex_rel_modulo_torsion}. The result follows from Lemma~\ref{lem:lim_commutes_with_torsion} (2).
\end{proof}

\begin{prop}\label{prop:torsionless_group_rationally}
Let \(n \ge 2\), and \(G\) be a \(n!\)-torsionless group. Then
\[\lim^i \, H_n(F/\gamma_2(R)) \otimes \Q = \begin{cases}
S^n_\Q(\g)_G, & i = n, \\
0, & i \ne n.
\end{cases}\]
\end{prop}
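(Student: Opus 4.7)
The strategy is to reduce the claim to Proposition~\ref{prop:lim_i_ex_rel_modulo_torsion} via Fact~\ref{fact:kuzmin_ex_rel_gamma_2}~(3). That fact supplies a natural isomorphism
\[H_n(F/\gamma_2(R)) \otimes \Q \simeq (\Ex^n R_\ab)_G \otimes \Q\]
of functors on \(\Pres(G)\); applying \(\lim^i\) to both sides transports the question to the computation of \(\lim^i \, (\Ex^n R_\ab)_G \otimes \Q\).

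Next, I would invoke Proposition~\ref{prop:lim_i_ex_rel_modulo_torsion} with the trivial coefficient module \(M = \Z\), obtaining the value of these limits already after tensoring with \(\Z[1/n!]\). Since the extension of scalars \(\Z[1/n!] \to \Q\) is flat, the base-change result for higher limits recorded earlier in the paper (higher limits commute with flat localization) allows one to further tensor with \(\Q\). This yields \(\lim^i \, (\Ex^n R_\ab)_G \otimes \Q = 0\) for \(i \ne n\) and
\[\lim^n \, (\Ex^n R_\ab)_G \otimes \Q = F\bigl((S^n(\g))_G\bigr) \otimes \Q.\]

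Finally, I would simplify the remaining term. Rationalization kills the torsion part of \((S^n(\g))_G\), so the free-part operator becomes transparent: \(F\bigl((S^n(\g))_G\bigr) \otimes \Q = (S^n(\g))_G \otimes \Q\). Coinvariants commute with tensoring by \(\Q\), and symmetric powers of a \(\Z\)-flat module commute with flat base change, so \((S^n(\g))_G \otimes \Q \simeq S^n_\Q(\g)_G\), which gives the stated formula.

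\textbf{Main obstacle.} The proof is essentially a concatenation of results already established in the paper; no new technical difficulty arises. The sole point requiring vigilance is the naturality in \(\Pres(G)\) of Kuz'min's map from Fact~\ref{fact:kuzmin_ex_rel_gamma_2}, which is used implicitly when passing between \(\lim^i\) of the two sides of the rational isomorphism.
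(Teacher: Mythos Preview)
Your proposal is correct and follows exactly the route the paper takes: its proof of this proposition consists of the single sentence ``This follows from Fact~\ref{fact:kuzmin_ex_rel_gamma_2}~(3) and Proposition~\ref{prop:lim_i_ex_rel_modulo_torsion}.'' You have simply unpacked that citation carefully, including the flat base-change step from \(\Z[1/n!]\) to \(\Q\) and the identification \(F((S^n\g)_G)\otimes\Q = S^n_\Q(\g)_G\); the only mild caveat is that the \emph{statement} of Proposition~\ref{prop:lim_i_ex_rel_modulo_torsion} is phrased for \(0\le i\le n\), but its proof (via Proposition~\ref{prop:lim_ex_sym_shift} with \(k=n\)) also gives the vanishing for \(i>n\), so your claim for all \(i\ne n\) is justified.
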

\begin{proof}
This follows from Fact~\ref{fact:kuzmin_ex_rel_gamma_2} (3) and Proposition~\ref{prop:lim_i_ex_rel_modulo_torsion}.
\end{proof}

\subsection{\(H_2(F/\gamma_*(R))\)}
Note that for every \(m \ge 1\) by Hopf's formula for the second homology one has
\begin{equation}\label{eq:h2_via_lie}
H_2(F/\gamma_m(R)) \cong \frac{[\overbrace{R, R, \ldots, R}^m]}{[\underbrace{R, R, \ldots, R}_m, F]} = \left(\frac{\gamma_m(R)}{\gamma_{m+1}(R)}\right)_F \cong (\Lie^m R_\ab)_G.
\end{equation}
The last isomorphism in the above series of identifications follows from the Magnus--Witt theorem; see Fact~\ref{fact:magnus_witt}.

The proof of the following proposition is tedious and can be found in Appendix~\ref{app:2}.
\begin{prop}\label{prop:lie_torsion}
Let \(G\) be a group and \(M\) be a \(\Z\)-free \(\ZG\)-module.
\begin{enumerate}
\item The group \(\lim \, (\Lie^n R_\ab \otimes M)_G\) is torsion for \(n \ge 3\).
\item If \(G\) is \(2\)-torsionless, then the groups \(\lim^i \, (\Lie^n R_\ab \otimes M)_G\) are torsion for \(n \ge 2\) and \(0 \le i < n\).
\end{enumerate}
\end{prop}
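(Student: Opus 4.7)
The plan is to reduce each assertion to vanishing after rationalisation and then invoke the Poincar\'e--Birkhoff--Witt decomposition of tensor powers in characteristic zero. By Lemma~\ref{lem:lim_commutes_with_torsion}~(2), showing that $\lim (\Lie^n R_\ab \otimes M)_G$ is torsion is equivalent to showing that it vanishes after tensoring with $\Q$. Since $\lim^i$ commutes with the flat base change $(-) \otimes \Q$, the analogous reduction applies to the higher limits in part~(2). I will therefore work throughout over $\Q$.

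Over $\Q$, the PBW theorem applied to the free Lie algebra on $R_\ab \otimes \Q$ furnishes a natural $\Q[G]$-linear isomorphism
\[
T^n(R_\ab) \otimes \Q \;\cong\; \bigoplus_{\lambda \vdash n}\; \bigotimes_{i \ge 1} S^{m_i(\lambda)}(\Lie^i R_\ab) \otimes \Q,
\]
where $\lambda = 1^{m_1} 2^{m_2} \cdots$. The summand indexed by $\lambda = (n)$ is exactly $\Lie^n R_\ab \otimes \Q$, cut out by the Dynkin idempotent. Tensoring with $M$, taking $G$-coinvariants, and applying $\lim^i$, one obtains a corresponding direct-sum decomposition of $\lim^i (R_\ab^{\otimes n} \otimes M)_G \otimes \Q$; by the formula $\lim^i R_\ab^{\otimes n} \otimes_{\ZG} M = H_{2n-i}(G; M)$ of \cite[Theorem~4.17]{fr_codes_2017} (valid for $0 \le i < n$), the total equals $H_{2n-i}(G; M) \otimes \Q$.

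The remaining task is to compute the non-$\Lie^n$ PBW summands and verify that they exhaust this total, forcing the $\Lie^n$ contribution to vanish. Each such summand contains at least one factor $S^{m_i}(\Lie^i R_\ab)$ with $i < n$, and can be analysed via the rational Koszul sequence attached to $R_\ab \mono P \epi \g$, exploiting that $\Ex^k P$ and $S^k P$ are sufficiently acyclic as $\Q[G]$-modules. A dimension-shifting argument in the spirit of Proposition~\ref{prop:lim_ex_sym_shift}, combined with an induction on $n$ and on the partition shape $\lambda$ (using part~(1) at lower values of $n$ inside the inductive step), is expected to match the non-$\Lie^n$ summands to the whole of $H_{2n-i}(G;M) \otimes \Q$ in the claimed range, leaving the $\Lie^n$ piece equal to zero.

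For part~(2), the $2$-torsionless hypothesis is exactly what makes $\Ex^2 P$ and $S^2 P$ into free $\ZG$-modules by \cite[Lemma~4.1]{hartley_homology_1991}, so the Koszul and dimension-shifting steps survive integrally rather than only rationally, and the vanishing propagates through the full range $0 \le i < n$. The main obstacle I anticipate is the combinatorial and homological bookkeeping: although the PBW decomposition is clean, aligning each summand's contribution with the correct slice of $H_{2n-i}(G;M)$ across all $i$ demands a careful double induction on $n$ and on the shape of $\lambda$, and in part~(1) one must handle the absence of any torsion-free hypothesis on $G$ by controlling the acyclicity of $\Ex^k P$ and $S^k P$ over $\Q[G]$ through auxiliary Koszul and transfer arguments. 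This is the technical heart of the argument and justifies the deferral of the proof to Appendix~\ref{app:2}.
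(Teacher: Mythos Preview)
Your reduction to $\Q$ via Lemma~\ref{lem:lim_commutes_with_torsion}~(2) is correct and matches the paper. Using PBW to realise $\Lie^n R_\ab \otimes \Q$ as a direct summand of $T^n R_\ab \otimes \Q$ is also fine. The gap is in what you do next: you propose to show the $\Lie^n$ summand vanishes by computing \emph{all other} PBW summands and checking that they exhaust $H_{2n-i}(G;M)\otimes\Q$. This is strictly harder than the original problem. Those other summands are tensor products of symmetric powers of lower Lie powers, e.g.\ $S^{m_k}(\Lie^k R_\ab)$ for various $k<n$, and you give no mechanism for computing their $\lim^i$ individually, let alone summing them to an exact answer. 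The phrase ``is expected to match'' is not an argument, and the proposed double induction on $n$ and $\lambda$ does not close: knowing by induction that $\lim^0(\Lie^k R_\ab\otimes N)_G\otimes\Q=0$ for $k<n$ does not compute $\lim^i$ of $S^m(\Lie^k R_\ab)$ or of products of such.

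The paper's route is far simpler because it never computes a complement; it only embeds into functors with vanishing limits. For part~(1) one uses the composite $\Lie^n R_\ab \hookrightarrow T^n R_\ab \cong T^{n-1}R_\ab\otimes R_\ab \hookrightarrow T^{n-1}R_\ab\otimes P$; after applying $(-\otimes M)_G$ its kernel is torsion by \cite[Lemma~2.1(i)]{stohr_homology_1993}, so over $\Q$ it is a monomorphism into a split monoadditive functor, and left exactness of $\lim$ finishes. For part~(2) one observes (by a Maschke-type averaging in $\Q[S_n]$, applied to the surjection $\Lie^{n-1}\otimes\mathrm{Id}\twoheadrightarrow\Lie^n$) that $\Lie^n\otimes\Q$ is a split direct summand of $\Lie^{n-1}R_\ab\otimes R_\ab\otimes\Q$; the relation sequence then gives a short exact sequence with split monoadditive middle term $(\Lie^{n-1}R_\ab\otimes P\otimes M)_G\otimes\Q$ (exactness on the left uses that $H_1(G;\Lie^{n-1}R_\ab\otimes\g\otimes M)$ is torsion, by Zerck~\cite{zerck_homology_1989}), and one inducts on $n$. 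Note also that you misidentify the role of the $2$-torsionless hypothesis: the paper's argument is rational throughout, and the hypothesis enters only through the base case $\Lie^2=\Ex^2$ via Proposition~\ref{prop:lim_i_ex_rel_modulo_torsion}, not to make Koszul resolutions survive integrally.
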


\begin{fact}[{\cite[Corollary~8.3]{hartley_homology_1991}, \cite[Theorem~6.3]{stohr_homology_1993}}]\label{fact:homology_lie}\leavevmode
\begin{enumerate}
\item Let \(p\) be a prime and \(G\) be a \(p\)-torsionless group. Then
\[H_k(G; \Lie^p R_\ab) \simeq H_{k+4}(G; \Z/p\Z), \quad k \ge 1,\]
and \(tH_0(G; \Lie^p R_\ab) \simeq H_4(G; \Z/p\Z)\).
\item Let \(G\) be a \(2\)-torsionless group. Then
\[H_k(G; \Lie^4 R_\ab) \simeq H_{k+6}(G; \Z/2\Z), \quad k \ge 1,\]
and \(tH_0(G; \Lie^4 R_\ab) \simeq H_6(G; \Z/2\Z)\).
\end{enumerate}
\end{fact}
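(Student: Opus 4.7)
My plan is to reduce each identity $H_k(G; \Lie^n R_\ab) \simeq H_{k+c}(G; \Z/p\Z)$ to two successive dimension shifts: one coming from the relation sequence $R_\ab \mono P \epi \g$, and one coming from the mod-$p$ structure of the Lie power itself.

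\textbf{First shift via the relation sequence.} I would apply the functor $\Lie^n(-)$ to the relation sequence and unpack it into the integral PBW-style Koszul-type sequence expressing $\Lie^n$ of a short exact sequence of flat modules in terms of tensor and symmetric powers of the outer terms. Under the torsion hypothesis on $G$, the relevant tensor and symmetric powers $T^k P$ and $S^k P$ are free $\ZG$-modules (Lemma~4.1 in~\cite{hartley_homology_1991} and its analogues), hence $H_{\ge 1}(G;-)$-acyclic. A standard dimension-shifting argument, entirely analogous to the proof of Lemma~\ref{lem:hmlgy_ex_sym_shift} for exterior powers, then yields an isomorphism
\[
H_k(G; \Lie^n R_\ab) \simeq H_{k+s_1}(G; \Lie^n \g), \qquad k \ge 1,
\]
where $s_1$ is the number of nontrivial steps in the PBW filtration.

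\textbf{Second shift via the mod-$p$ structure of $\Lie^n$.} The cited references compute $\Lie^n$ of a torsion-free abelian group modulo its torsion. For $n = p$ prime, the classical fact (Jacobson, Wever) is that modulo $p$-torsion the Lie power $\Lie^p M$ admits a short exact sequence involving $M \otimes \Z/p\Z$ as a Frobenius-type summand, reflecting the identity $p \cdot e_p = \mathrm{id}$ on $\Lie^p M$ for the Dynkin idempotent $e_p$. Applied to $M = \g$ and combined with the sequence $\g \mono \ZG \epi \Z$, this produces the remaining shift $s_2 = 4 - s_1$ identifying the torsion of $H_\ast(G; \Lie^p \g)$ with $H_{\ast+s_2}(G; \Z/p\Z)$. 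For $n = 4$ with $p = 2$, the analogous analysis must in addition keep track of $S^2(\Lie^2 M)$ and $\Lie^2(\Lie^2 M)$ pieces alongside $T^4 M$, which accounts for the larger total shift of $6$ in part~(2). The $k = 0$ case is handled separately by passing to the torsion subgroup, since $H_0(G; \Lie^n R_\ab) = (\Lie^n R_\ab)_G$ carries a non-torsion summand (coming roughly from $(S^n \g)_G$) that disappears after applying $t(-)$.

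\textbf{Main obstacle.} The first step is essentially routine once one has the acyclicity of $T^k P$ and $S^k P$ under the torsion hypothesis. The genuine difficulty is the second step: the integral structure theorem for $\Lie^p$ and $\Lie^4$ of torsion-free abelian groups, which rests on Dynkin--Specht--Wever idempotents, Jacobi-like identities iterated $p$ times, and careful control of $p$-primary torsion. This is precisely the technical content of~\cite[Corollary~8.3]{hartley_homology_1991} and~\cite[Theorem~6.3]{stohr_homology_1993}, and is where the hardest work lies.
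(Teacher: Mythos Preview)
The paper does not prove this statement at all: it is recorded as a \emph{Fact} and attributed wholesale to \cite[Corollary~8.3]{hartley_homology_1991} and \cite[Theorem~6.3]{stohr_homology_1993}, with no argument given. There is therefore no ``paper's own proof'' to compare your proposal against; the authors simply import the result as a black box.

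That said, a few remarks on your sketch. Your second paragraph posits a ``PBW-style Koszul-type sequence'' obtained by applying $\Lie^n(-)$ to the relation sequence $R_\ab \mono P \epi \g$. Unlike $\Ex^n$, $S^n$ and $\Gamma^n$, the Lie power functor does not come with a natural Koszul complex for short exact sequences of flat modules, and there is no off-the-shelf dimension-shifting identity producing $H_k(G;\Lie^n R_\ab) \simeq H_{k+s_1}(G;\Lie^n \g)$. The actual arguments in the cited references proceed differently: one exploits explicit short exact sequences (e.g.\ in degree $p$ the sequence $\Lie^p M \mono M \otimes \Lie^{p-1} M \epi V^p M$ of \cite{hartley_homology_1991}, and in degree $4$ the sequence $\Lie^4 M \mono M \otimes \Lie^3 M \to \Ex^2(\Ex^2 M) \epi \Ex^4 M$ of \cite{stohr_homology_1993}) together with the Magnus embedding of $R_\ab$ into the free $\ZG$-module $P$, and then shifts dimensions through these specific sequences. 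Your ``Main obstacle'' paragraph correctly locates the genuine work in the integral structure of $\Lie^p$ and $\Lie^4$, but the route you describe to get there (a generic Koszul sequence for Lie powers) does not exist as stated.
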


\begin{cor}\label{cor:lim_h2_lie}
Let \(p\) be a prime and \(G\) be a \(p\)-torsionless group.
\begin{enumerate}
\item One has \[\lim\, H_2(F/\gamma_p(R)) = \lim \, (\Lie^p R_\ab)_G = H_4(G; \Z/p\Z).\]
\item If \(p = 2\), then
\[\lim\, H_2(F/\gamma_4(R)) = \lim \, (\Lie^4 R_\ab)_G = H_6(G; \Z/2\Z).\]
\end{enumerate}
\end{cor}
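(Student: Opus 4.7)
The first equality in both parts is immediate from the natural identification $H_2(F/\gamma_m(R)) \cong (\Lie^m R_\ab)_G$ recorded in equation~\eqref{eq:h2_via_lie}, which holds on all of $\Pres(G)$. So the task reduces to computing $\lim \, (\Lie^n R_\ab)_G$ in two cases: $n = p$ with $G$ being $p$-torsionless (part~1), and $n = 4$ with $G$ being $2$-torsionless (part~2).

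First I would argue that $\lim \, (\Lie^n R_\ab)_G$ is a torsion abelian group in both cases. For $n \ge 3$ this is a direct application of Proposition~\ref{prop:lie_torsion}(1) with $M = \Z$; this covers part~2 as well as the $p \ge 3$ branch of part~1. For the $p = 2$ branch of part~1 we have $n = 2$, and the $2$-torsionless hypothesis on $G$ lets us invoke Proposition~\ref{prop:lie_torsion}(2) with $i = 0$ and $M = \Z$ to again conclude that $\lim \, (\Lie^2 R_\ab)_G$ is torsion.

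Having established the torsion property, I would apply Lemma~\ref{lem:lim_commutes_with_torsion}(1) to rewrite
\[\lim \, (\Lie^n R_\ab)_G \;=\; t\bigl(\lim \, (\Lie^n R_\ab)_G\bigr) \;=\; \lim \, t(\Lie^n R_\ab)_G.\]
Now Fact~\ref{fact:homology_lie} identifies the pointwise torsion of $H_0(G; \Lie^n R_\ab) = (\Lie^n R_\ab)_G$ with $H_4(G;\Z/p)$ in part~1 and with $H_6(G;\Z/2)$ in part~2. Since these target groups depend only on $G$ and not on the choice of free presentation, $t(\Lie^n R_\ab)_G$ is a constant functor on $\Pres(G)$, and hence coincides with its own limit, yielding the stated value.

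The one subtlety—more of a bookkeeping issue than a real obstacle—is checking that the isomorphisms supplied by Fact~\ref{fact:homology_lie} are natural in the presentation $R \mono F \epi G$, so that $t(\Lie^n R_\ab)_G$ really is a constant functor as opposed to merely being pointwise constant. This should be transparent from Hartley--Stöhr's construction, whose ingredients (the Magnus embedding, the Koszul-type resolutions, and Shapiro-style dimension shifts) are manifestly functorial on $\Pres(G)$; alternatively, since $t(\Lie^n R_\ab)_G$ is a subfunctor of $(\Lie^n R_\ab)_G$ which pointwise equals a fixed group, any morphism in $\Pres(G)$ induces an endomorphism of that fixed group, and strong connectedness of $\Pres(G)$ together with the identification on a single object forces these endomorphisms to be identities.
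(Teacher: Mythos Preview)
Your proposal is correct and follows exactly the paper's route: use Proposition~\ref{prop:lie_torsion} to see the limit is torsion, apply Lemma~\ref{lem:lim_commutes_with_torsion}(1), and then read off the value from Fact~\ref{fact:homology_lie}. Your first justification for naturality (functoriality of the Hartley--St\"ohr constructions) is the right one; the alternative via strong connectedness does not work as stated, since strong connectedness alone cannot force the induced endomorphisms of a pointwise-constant functor to be identities.
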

\begin{proof}
Follows from Proposition~\ref{prop:lie_torsion}, Fact~\ref{fact:homology_lie} and Lemma~\ref{lem:lim_commutes_with_torsion}~(1).
\end{proof}

\begin{rem}
Corollary~\ref{cor:lim_h2_lie}~(1) yields an alternative proof (avoiding the usage of metabelian Lie powers) to the isomorphism from \cite[p.~16]{ivanov_higher_2015}
\[\lim \, \frac{[R, R, R]}{[R, R, R, F]} = H_4(G; \Z/3)\]
for groups without \(3\)-torsion.
\end{rem}

We will return to these results in Section \ref{sec:fr_inf} when addressing the computation of limits of $\f\r_{\infty}$-codes.
\section{Higher limits of functorial ideals}
In this section, we establish key results that facilitate the computation of higher derived limits \(\lim^* \, \a\), where \(\a\) is a functorial ideal in a group ring \(\BaseRing[F]\) of a free extension \(F \epi G\). It turns out that Hopf ideals play a special role in this subject matter; see \S\ref{sec:hopf}.

Let \(\mathsf{FunIdeals}(\BaseRing[F])\) be a class of all two-sided ideals in \(\BaseRing[F]\) depending functorialy on a free extension \(F \epi G\). We observe that the most general functorial universums
\[\mathcal{U} \subset \mathsf{FunIdeals}(\BaseRing[F]),\]
amenable to productive study, are constructed via the following scheme. Take any finite set of functorial Hopf ideals \(\h_1, \ldots, \h_n\). We call these ideals \textit{letters}. Then, take its closure in \(\mathsf{FunIdeals}(\BaseRing[F])\) under finite sums, products and intersections.
Providing a general description of such languages presents an interesting theoretical challenge.

\subsection{Higher limits of free modules}
We start with a fundamental result that shows how to compute limits of functorial free modules.

Let \(\BaseRing\) be a commutative ring, and \(\BaseRing[-] \colon \Set \to \kMod\) be a free module functor. Let \(\C\) be a strongly connected category with binary coproducts.

\begin{thm}\label{thm:lim_free_module}
Let \(\U \colon \C \to \Set\) be a functorial set. Then
\[\lim^n \, \BaseRing[\U] = \begin{cases}
\BaseRing[\lim\, \U], & n = 0, \\
0, & n \ge 1.
\end{cases}\]
\end{thm}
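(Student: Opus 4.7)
The natural strategy is to use the standard complex formula from \S\ref{standardcomplex}: fixing any $c_0 \in \C$, one has
\[\lim^n \BaseRing[\U] \;=\; \pi^n\bigl(\BaseRing[\U\BB(c_0)]\bigr),\]
where $\BB(c_0)^n = \sqcup^{n+1} c_0$. Denote $X^\bullet := \U\BB(c_0)$, the cosimplicial set with $X^n = \U(\sqcup^{n+1} c_0)$. Then $\BaseRing[X^\bullet]$ is the cosimplicial $\BaseRing$-module obtained by applying the free-module functor $\BaseRing[-] \colon \Set \to \kMod$ objectwise to $X^\bullet$, and the problem reduces to computing its cohomotopy.

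The key idea is to exhibit $X^\bullet$ as cosimplicially contractible onto the constant cosimplicial set at $\lim \U$. The contracting data is built from the iterated codiagonal morphisms $\nabla \colon \sqcup^{n+2} c_0 \to \sqcup^{n+1} c_0$ (which fold two adjacent coproduct summands) together with the obvious coaugmentation $c_0 \to \sqcup^{n+1} c_0$; these exist because $\C$ has binary coproducts, and together they equip $\BB(c_0)$ with an extra codegeneracy / split coaugmentation in $\C$ over $c_0$. Applying $\U$ transports this structure to $X^\bullet$, where the coaugmentation now lands inside the subset $\lim \U \subseteq X^0 = \U(c_0)$: the compatibility of the codiagonal with the two coproduct inclusions $\iota_1,\iota_2 \colon c_0 \rightrightarrows c_0 \sqcup c_0$ forces the coaugmentation target to consist precisely of those $x \in \U(c_0)$ with $\U(\iota_1)(x) = \U(\iota_2)(x)$, and this set identifies with $\lim \U$ by the standard-complex description of limits in $\Set$.

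Since $\BaseRing[-]$ is a functor, it preserves cosimplicial homotopies. Therefore $\BaseRing[X^\bullet]$ is cosimplicially homotopy equivalent to the constant cosimplicial $\BaseRing$-module with value $\BaseRing[\lim \U]$, whose cohomotopy is $\BaseRing[\lim \U]$ in degree $0$ and vanishes in higher degrees. This will establish the two claimed equalities simultaneously.

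\textbf{Main obstacle.} The technical crux is the explicit construction of the cosimplicial contraction on $X^\bullet$ and the verification that its coaugmentation target is exactly $\lim \U$. One must carefully package the iterated codiagonals into maps $X^{n+1} \to X^n$ and check the cosimplicial identities required of an extra codegeneracy --- a bookkeeping exercise using associativity and commutativity of coproducts --- and then use strong connectedness of $\C$ to propagate the equality $\U(\iota_1)(x) = \U(\iota_2)(x)$ at the level of $c_0$ to full compatibility over all of $\C$, identifying the fixed-point set with $\lim \U$. Once this set-level contraction is secured, the passage to $\BaseRing$-modules, and hence the vanishing of $\lim^n$ for $n \ge 1$, is automatic by functoriality of $\BaseRing[-]$.
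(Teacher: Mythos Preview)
Your plan has a genuine gap: the cosimplicial object $\BB(c_0)$ does \emph{not} carry a split coaugmentation over $c_0$ in $\C$. For $\eta\colon c_0\to\BB(c_0)^0=c_0$ to be a coaugmentation one needs $d^0\eta=d^1\eta$, i.e.\ the two coproduct inclusions $\iota_1,\iota_2\colon c_0\rightrightarrows c_0\sqcup c_0$ must coincide, which they do not. Moreover, the fold maps $\sqcup^{n+1}c_0\to\sqcup^{n}c_0$ are already exhausted by the existing codegeneracies $s^0,\ldots,s^{n-1}$; there is no additional fold map available to serve as an extra $s^{-1}$. Were such a split coaugmentation over $c_0$ to exist, applying any functor $\U$ would force $\pi^0\,\U\BB(c_0)=\U(c_0)$ rather than $\lim\,\U$, contradicting the standard-complex description of limits. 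Your attempted fix---that after applying $\U$ the coaugmentation ``lands inside'' $\lim\,\U$---does not rescue the argument: the codiagonals produce maps $X^n\to\U(c_0)$, not $X^n\to\lim\,\U$, and there is no natural retraction $\U(c_0)\to\lim\,\U$ (indeed none at all when $\lim\,\U=\emptyset$) that extends to a compatible system of extra codegeneracies.

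The paper proceeds without any contraction, arguing elementwise in the free modules $\BaseRing[X^n]$. The decisive input is not a homotopy but the fact that every coface $d^i$ of $X^\bullet$ is a split injection and, crucially, all cofaces share the common retraction $s^0$ (via the identities $s^0d^0=s^0d^1=\id$). For $n=0$ this forces each basis element occurring in a $0$-cocycle to individually satisfy $d^0x=d^1x$, hence to lie in $\lim\,\U$; note that $\BaseRing[-]$ does not preserve equalizers in general, so this common-retraction property is genuinely needed. For $n\ge 1$ one shows that every basis element in the support of an $n$-cocycle lies in the image of some $d^i$, hence vanishes in the normalized complex $Q$. If you wish to salvage a homotopy-style argument, you would have to construct an explicit chain contraction of $C\BaseRing[X^\bullet]$ onto $\BaseRing[\lim\,\U]$, but writing that down amounts to the same combinatorics the paper carries out directly.
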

\begin{proof}
Let \(\F = \BaseRing[\U] \colon \C \to \kMod\). Fix some \(c \in \C\). There is a standard cosimplicial object \(\BB(c)\) in \(\C\). Consider a cosimplicial abelian group \(\F\,\BB(c)\). We aim to compute \(\lim^n\,\F = \pi^n \, \F\,\BB(c)\). There are the alternate sum complex \(C = C\F\,\BB(c)\) and the Moore complex \(Q = Q\F\,\BB(c)\) of normalized cochains. There is a quasi-isomorphism \(C \epi Q\), and the \(n\)-th cohomology computes \(\lim^n\,\F\).

Let \(n = 0\). Then
\[\lim^0 \, \F = H^0(C) = \Zy^0(C) = \ker \{d^0 - d^1 \colon \BaseRing[\U(c)] \to \BaseRing[\U(c \sqcup c)]\}.\]
We have
\[\lim\,\U = \inv\,\U(c) = \eq\{\U(c) \xrightarrow{d^0, d^1} \U(c \sqcup c)\},\]
hence \(\BaseRing[\lim\,U]\) naturally embeds into \(\lim^0\, \F\). To show the reverse inclusion we proceed as follows. Let \(t \in \Zy^0(C)\) be a \(0\)-cocycle. We can write \(t = \sum_\alpha \lambda_\alpha x_\alpha\), where \(x_\alpha\) are some pair-wise distinct elements of \(\U(c)\), and \(\lambda_\alpha\) are non-zero elements of \(\BaseRing\). It is enough to show that for every \(\alpha\) one has \(d^0(x_\alpha) = d^1(x_\alpha)\). Note that if for some \(i, j \in \{0, 1\}\) and some indices \(\alpha, \beta\) one has an equality \(d^i(x_\alpha) = d^j(x_\beta)\), then
\[x_\alpha = s^0 d^i(x_\alpha) = s^0 d^j(x_\beta) = x_\beta.\]
The condition for \(t\) to be a cocycle deciphers as
\[\sum_\alpha \lambda_\alpha d^0(x_\alpha) = d^0(t) = d^1(t) = \sum_\alpha \lambda_\alpha d^1(x_\alpha).\]
Note that all elements \(\lambda_\alpha d^0(x_\alpha)\) of \(\F(c \sqcup c)\) are pair-wise distinct, since \(d^0\) is an embedding, and \(\F(c \sqcup c)\) is a free \(\BaseRing\)-module with the basis \(U(c \sqcup c)\). Hence, for every \(\alpha\) there is some index \(\beta\) such that \(d^0(x_\alpha) = d^1(x_\beta)\), which implies (by the observation above) that \(x_\alpha = x_\beta\), and shows the desired result. 

Now, let \(n \ge 1\). Then
\[\lim^n \, \F = H^n(C) = \Zy^n(C) / \Bd^n(C).\]
We aim to show that every \(n\)-cocycle \(t \in \Zy^n(C)\) is a coboundary. Since the natural projection \(C \epi Q\) is a quasi-isomorphism, it is enough to show that \(t\) maps to a coboundary of \(Q\). This is equivalent to show that \(t\) lies in the sum of the images of cofaces \(d^i \colon \F\,\BB(c)^{n-1} \to \F\,\BB(c)^n\).
As before, we write \(t = \sum_\alpha \lambda_\alpha x_\alpha\).
It is enough to show that each \(x_\alpha\) is in image of some \(d^i\).

The condition for \(t\) to be a cocycles deciphers as
\[0 = d(t) = \sum_{i = 0}^{n+1} (-1)^i d^i(t) = \sum_{i = 0}^{n+1} \sum_\alpha (-1)^i \lambda_\alpha d^i(x_\alpha).\]
Since \(\F(c^{\sqcup\,n+1})\) is a free \(k\){-}module with the basis \(\U(c^{\sqcup\,n+1})\), for every \(\alpha\) there exist \(i\) and \(\beta\) such that \((i, \beta) \ne (0, \alpha)\) and there is an identity
\begin{equation}\label{eq:lim_free_group_1}
d^0(x_\alpha) = d^i(x_\beta).
\end{equation}
There are three cases \(i = 0\), \(i = 1\) and \(i \ge 2\) which we consider separately.

First, if \(i = 0\), then \(x_\alpha = x_\beta\) (since \(d^0\) is monomorphism), and \(\alpha = \beta\) (since \(x\)'s are distinct), which contradicts \((i, \beta) \ne (0, \alpha)\).

Second, if \(i = 1\), then
\begin{equation}\label{eq:lim_free_group_2}
x_\alpha = s^0 d^0(x_\alpha) = s^0 d^1(x_\beta) = x_\beta,
\end{equation}
and
\[x_\alpha = s^1 d^1(x_\alpha) \overset{\eqref{eq:lim_free_group_2}}{=} s^1 d^1(x_\beta) \overset{\eqref{eq:lim_free_group_1}}{=} s^1 d^0(x_\alpha) = d^0 s^0(x_\alpha),\]
which shows that \(x_\alpha\) is in the image of \(d^0\).

Lastly, if \(i \ge 2\), then
\[x_\alpha = s^0 d^0(x_\alpha) \overset{\eqref{eq:lim_free_group_1}}{=} s^0 d^i(x_\beta) = d^{i-1} s^0(x_\beta),\]
which shows that \(x_\alpha\) is in the image of \(d^{i-1}\).
\end{proof}

\begin{cor}\label{cor:lim_free_aug}
Let \(\U \colon \C \to \Set\) be a functorial set. Then
\[\lim^n \, \Delta_\BaseRing(\U) = \begin{cases}
\Delta_\BaseRing(\lim\, \U), & n = 0, \\
0, & n \ge 1.
\end{cases}\]
\end{cor}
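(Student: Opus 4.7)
The plan is to derive the corollary from Theorem~\ref{thm:lim_free_module} by feeding it through the natural short exact sequence of functors
\[\Delta_\BaseRing(\U) \mono \BaseRing[\U] \xrightarrow{\varepsilon} \const(\BaseRing),\]
where $\varepsilon$ is the pointwise augmentation $u \mapsto 1$. The key observation is that the constant functor $\const(\BaseRing)$ is itself the functorial free $\BaseRing$-module on the constant singleton $\const(*)$, so Theorem~\ref{thm:lim_free_module} applies to both the middle and the right-hand terms of the sequence and simultaneously yields $\lim \, \BaseRing[\U] = \BaseRing[\lim \, \U]$, $\lim \, \const(\BaseRing) = \BaseRing[\lim \, *] = \BaseRing$, and vanishing of all higher limits of these two functors.

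I would then pass to the associated long exact sequence of higher limits. The vanishings above immediately force $\lim^n \, \Delta_\BaseRing(\U) = 0$ for $n \ge 2$ and collapse the remaining information to the four-term exact sequence
\[0 \to \lim \, \Delta_\BaseRing(\U) \to \BaseRing[\lim \, \U] \xrightarrow{\tilde\varepsilon} \BaseRing \to \lim^1 \, \Delta_\BaseRing(\U) \to 0,\]
where $\tilde\varepsilon$ is the augmentation induced on the invariant set $\lim \, \U$. Reading off its kernel and cokernel will then give the desired values of $\lim^0$ and $\lim^1$.

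The one point that requires care is the surjectivity of $\tilde\varepsilon$, equivalently the nonemptiness of $\lim \, \U$; if $\lim \, \U = \emptyset$, the cokernel of $\tilde\varepsilon$ is $\BaseRing$ rather than $0$. I expect this to be the only real obstacle. It is, however, harmless in practice: for the functorial sets relevant to the paper --- subgroups of the free group $F$ attached to a presentation $R \mono F \epi G$, each of which invariantly contains the identity element --- the identity furnishes a canonical global section assembling into a distinguished element of $\lim \, \U$. Under this mild nonemptiness hypothesis (which can either be imposed or verified in every application), $\tilde\varepsilon$ is surjective, $\lim^1 \, \Delta_\BaseRing(\U) = 0$, and $\ker \tilde\varepsilon = \Delta_\BaseRing(\lim \, \U)$, as required.
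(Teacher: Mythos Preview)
Your approach is exactly the paper's: apply Theorem~\ref{thm:lim_free_module} to the short exact sequence \(\Delta_\BaseRing(\U) \mono \BaseRing[\U] \epi \BaseRing\) and read off the long exact sequence of higher limits. You are in fact more careful than the paper's one-line proof, which does not address the edge case \(\lim\,\U = \emptyset\); your observation that in every application the functorial set underlies a group (so \(\lim\,\U\) contains the identity and \(\tilde\varepsilon\) is surjective) is well-taken.
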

\begin{proof}
This follows from the long exact sequence of higher limits for the short exact sequence of representations
\[\Delta_\BaseRing(\U) \mono \BaseRing[\U] \epi \BaseRing,\]
and the Theorem~\ref{thm:lim_free_module}.
\end{proof}

\begin{cor}
Let \(\U\colon \C \to \Grp\) be a functorial group. Then
\[\lim^n \, \U_\ab \otimes \BaseRing = \lim^{n+1} \, \Delta_\BaseRing(\U)^2, \quad n \ge 1.\]
If \(\lim\, \U = 0\), then the above formula holds for \(n = 0\).
\end{cor}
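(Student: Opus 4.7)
The plan is to set up the classical short exact sequence of functorial abelian groups and then feed it into the long exact sequence of higher limits, using Corollary~\ref{cor:lim_free_aug} to kill almost every term.

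First, I would recall the standard fact that for any group $G$ there is a canonical isomorphism $\Delta_\BaseRing(G)/\Delta_\BaseRing(G)^2 \cong G_\ab \otimes_\Z \BaseRing$, given by $g - 1 \mapsto [g] \otimes 1$. This is natural in $G$, so applied pointwise to the functorial group $\U \colon \C \to \Grp$ it produces a short exact sequence of functors $\C \to \kMod$:
\[
\Delta_\BaseRing(\U)^2 \mono \Delta_\BaseRing(\U) \epi \U_\ab \otimes \BaseRing.
\]
(Here $\Delta_\BaseRing(\U)^2$ denotes the functorial ideal whose value at $c \in \C$ is the square of the augmentation ideal $\Delta_\BaseRing(\U(c))^2$.)

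Next, I would apply the long exact sequence for $\lim^\smallbullet$. By Corollary~\ref{cor:lim_free_aug} we have $\lim^n \, \Delta_\BaseRing(\U) = 0$ for all $n \ge 1$, while $\lim^0 \, \Delta_\BaseRing(\U) = \Delta_\BaseRing(\lim \, \U)$. For $n \ge 1$ the relevant three-term segment of the long exact sequence reads
\[
\lim^n \, \Delta_\BaseRing(\U) \to \lim^n (\U_\ab \otimes \BaseRing) \to \lim^{n+1} \, \Delta_\BaseRing(\U)^2 \to \lim^{n+1} \, \Delta_\BaseRing(\U),
\]
whose outer terms are both zero, giving the desired isomorphism $\lim^n (\U_\ab \otimes \BaseRing) \cong \lim^{n+1} \, \Delta_\BaseRing(\U)^2$ for every $n \ge 1$.

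For $n = 0$, the relevant segment is
\[
\lim \, \Delta_\BaseRing(\U) \to \lim (\U_\ab \otimes \BaseRing) \to \lim^1 \, \Delta_\BaseRing(\U)^2 \to \lim^1 \, \Delta_\BaseRing(\U) = 0.
\]
If $\lim \, \U = 0$, then $\lim \, \Delta_\BaseRing(\U) = \Delta_\BaseRing(\lim \, \U) = 0$ by Corollary~\ref{cor:lim_free_aug} (the augmentation ideal of the group algebra of the trivial group vanishes), and the leftmost term disappears, yielding $\lim(\U_\ab \otimes \BaseRing) \cong \lim^1 \, \Delta_\BaseRing(\U)^2$. There is no genuine obstacle here; the only subtle point is verifying that the classical quotient identification $\Delta(G)/\Delta(G)^2 \cong G_\ab \otimes \BaseRing$ is natural enough to lift to a short exact sequence in the functor category, which is immediate from its construction.
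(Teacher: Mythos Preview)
Your proof is correct and follows exactly the same route as the paper: both use the short exact sequence $\Delta_\BaseRing(\U)^2 \mono \Delta_\BaseRing(\U) \epi \U_\ab \otimes \BaseRing$ together with Corollary~\ref{cor:lim_free_aug} and the long exact sequence of higher limits. The paper's proof is a one-line sketch of precisely what you wrote out in detail.
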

\begin{proof}
This follows from the long exact sequence of higher limits for the short exact sequence of representations
\[\Delta_\BaseRing(\U)^2 \mono \Delta_\BaseRing(\U) \epi H_1(\U; \BaseRing) \simeq \U_\ab \otimes \BaseRing,\]
and the Corollary~\ref{cor:lim_free_aug}.
\end{proof}

\begin{lem}\label{lem:invariants_of_power}
Let \(\U\colon \C \to \Set\) be a functorial set.
Then for every \(n \ge 1\) one has \(\lim\, U^n = (\lim\, U)^n\) and \(\lim\, U^n/S_n = (\lim \, U)^n / S_n\).
\end{lem}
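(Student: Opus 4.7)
My plan is to handle the two identities separately. For $\lim\, U^n = (\lim\, U)^n$, essentially no work is needed: the limit functor $\lim\colon \Fun(\C,\Set) \to \Set$ is right adjoint to $\const$, and therefore preserves all small limits, in particular the finite product $U^n = U \times \cdots \times U$. This already gives the first identity.

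For the quotient statement, I would use the concrete model $\lim\, \F = \inv\, \F(c) = \eq\{d^0, d^1\colon \F(c) \to \F(c\sqcup c)\}$ employed in the proof of Theorem~\ref{thm:lim_free_module}, valid for any fixed $c \in \C$. The quotient map $U^n \epi U^n/S_n$ induces, after applying $\lim$ and using the first part together with the universal property of the $S_n$-coinvariants, a canonical comparison map
\[\Phi\colon (\lim\, U)^n/S_n \to \lim\,(U^n/S_n),\]
which I would show is a bijection. Injectivity is immediate: two $S_n$-orbits of $n$-tuples from $\lim\, U$ that coincide inside $U(c)^n/S_n$ differ by some $\sigma \in S_n$, hence already represent the same orbit over $\lim\, U$.

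The substantive step, and the main obstacle, is surjectivity. An orbit $[x_1,\ldots,x_n] \in U(c)^n/S_n$ lies in $\lim\,(U^n/S_n)$ precisely when there exists a single $\sigma \in S_n$ with $d^0(x_i) = d^1(x_{\sigma(i)})$ for all $i$. Applying the codegeneracy $s^0$ and invoking the cosimplicial identity $s^0 d^0 = s^0 d^1 = \id$ yields $x_i = x_{\sigma(i)}$ for every $i$. Therefore, along any cycle of $\sigma$, the entries $x_i$ are forced to coincide with a single element $y$, which then satisfies $d^0(y) = d^1(y)$ and hence lies in $\inv\, U(c) = \lim\, U$; fixed points of $\sigma$ are handled by the same reasoning applied directly to the defining equation. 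Consequently $[x_1, \ldots, x_n]$ admits a representative with every entry in $\lim\, U$, which is exactly what surjectivity of $\Phi$ demands. The only delicate conceptual point is tracking that a single permutation $\sigma$ governs the equalizer condition for all coordinates simultaneously, so that the cosimplicial collapse via $s^0$ can propagate along each cycle.
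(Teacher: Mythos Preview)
Your argument is correct and follows essentially the same route as the paper: preservation of products for the first identity, and the cosimplicial collapse $s^0 d^0 = s^0 d^1 = \id$ to force $x_i = x_{\sigma(i)}$ for the second. The cycle-chasing is a slight detour --- once $x_i = x_{\sigma(i)}$ is known, substituting into $d^0(x_i) = d^1(x_{\sigma(i)})$ already yields $d^0(x_i) = d^1(x_i)$, so each $x_i$ lies in $\inv\, U(c)$ without having to propagate along cycles.
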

\begin{proof}
The first part is trivial, since limits commute with products.

We aim to prove the second part. We can identify the quotient \(U^n/S_n\) with the collection of multisets \(\{x_1, \ldots, x_n\}\) of elements \(x_i \in U\).
The inclusion \[(\lim \, U)^n / S_n \subset \lim\, U^n/S_n\]
is obvious. To prove the reverse inclusion, we proceed as follows. Let \(\{x_1, \ldots, x_n\}\) be some invariant of \(U^n/S_n\). Then there is an equality of multisets
\[\{d^0(x_1), \ldots, d^0(x_n)\} = \{d^1(x_1), \ldots, d^1(x_n)\}.\]
Then for every \(i\) there is some \(j\) such that \(d^0(x_i) = d^1(x_j)\). As before,
\[x_i = s^0 d^0(x_i) = s^0 d^1(x_j) = x_j,\]
and \(x_i\) is an invariant element of \(U\) as well.
\end{proof}
\begin{cor}\label{cor:power_of_free_limits}
Let \(\U\colon \C \to \Set\) be a functorial set, and \(\mathcal{F}\) be one of the following functors: \(\otimes^n\), \(S^n\). Then
\[\lim^i \, \mathcal{F}(\Z[U]) = \begin{cases}
\mathcal{F}(\Z[\lim\, U]), & i = 0, \\
0, & i > 0.
\end{cases}\]
\end{cor}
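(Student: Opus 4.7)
The plan is to reduce each of the two cases to a direct application of Theorem~\ref{thm:lim_free_module} combined with Lemma~\ref{lem:invariants_of_power}. The key observation is that both $\otimes^n$ and $S^n$, when applied to a free $\Z$-module of the form $\Z[\U]$, produce again a free $\Z$-module whose basis is a natural set-valued construction on $\U$.

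First I would establish the two natural isomorphisms
\[
\otimes^n \Z[\U] \;\cong\; \Z[\U^n], \qquad S^n \Z[\U] \;\cong\; \Z[\U^n/S_n],
\]
as functors $\C \to \kMod$. The first is the usual identification of the tensor product of free modules on bases $\U,\ldots,\U$ with the free module on the product basis $\U^n$. For the second, I would note that if $\U$ is equipped with a (functorial) total order then $S^n\Z[\U]$ has the explicit basis of non-decreasing $n$-tuples (as recalled in the preliminaries), and this basis is canonically identified with the orbit set $\U^n/S_n$ of multisets; the identification is natural in $\U$ because the quotient map $\Z[\U^n]\twoheadrightarrow S^n\Z[\U]$ is natural and the chosen section is compatible with $\C$-morphisms after symmetrization.

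Next I would apply Theorem~\ref{thm:lim_free_module} to each of the functorial sets $\U^n$ and $\U^n/S_n$. This immediately gives the vanishing $\lim^i = 0$ for $i\ge 1$ in both cases, and identifies
\[
\lim \, \Z[\U^n] = \Z[\lim\,\U^n], \qquad \lim \, \Z[\U^n/S_n] = \Z[\lim\,(\U^n/S_n)].
\]
Then I would invoke Lemma~\ref{lem:invariants_of_power} to rewrite the limits of the underlying functorial sets as $\lim\,\U^n=(\lim\,\U)^n$ and $\lim\,(\U^n/S_n)=(\lim\,\U)^n/S_n$, and finally re-apply the two basis identifications (now for the constant set $\lim\,\U$) to conclude $\Z[(\lim\,\U)^n]\cong\otimes^n\Z[\lim\,\U]$ and $\Z[(\lim\,\U)^n/S_n]\cong S^n\Z[\lim\,\U]$.

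There is no serious obstacle; the whole statement is a packaging of Theorem~\ref{thm:lim_free_module} and Lemma~\ref{lem:invariants_of_power}. The only point that requires a moment of care is the naturality of the symmetric-power identification $S^n\Z[\U]\cong\Z[\U^n/S_n]$ in $\U$, since an arbitrary morphism in $\C$ need not preserve the chosen order on $\U$; but this is harmless because both sides are defined as $S_n$-coinvariants of $\Z[\U^n]$ and the identification can be described without reference to an order by sending a multiset $\{x_1,\ldots,x_n\}$ to $\tfrac{1}{|\mathrm{Stab}|}\sum_{\sigma}x_{\sigma(1)}\circ\cdots\circ x_{\sigma(n)}$, which is manifestly natural.
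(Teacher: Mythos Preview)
Your proposal is correct and follows exactly the paper's approach: invoke Theorem~\ref{thm:lim_free_module} and Lemma~\ref{lem:invariants_of_power} via the canonical identifications \(T^n\Z[\U]=\Z[\U^n]\) and \(S^n\Z[\U]=\Z[\U^n/S_n]\). One small quibble: your explicit formula involving \(\tfrac{1}{|\mathrm{Stab}|}\) is ill-defined over \(\Z\) and unnecessary --- your preceding sentence already settles naturality, since both \(S^n\Z[\U]\) and \(\Z[\U^n/S_n]\) are the \(S_n\)-coinvariants of the permutation module \(\Z[\U^n]\), and the identification simply sends \(\{x_1,\ldots,x_n\}\mapsto x_1\circ\cdots\circ x_n\).
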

\begin{proof}
This follows from Theorem~\ref{thm:lim_free_module}, Lemma~\ref{lem:invariants_of_power}, and the canonical identifications \(T^n(\Z[U]) = \Z[U^n]\) and \(S^n(\Z[U]) = \Z[U^n / S_n]\).
\end{proof}

The following proposition says how to compute higher limits of some symmetric powers and exterior powers.
\begin{prop}
Let \(\U\colon \C \to \Set\) be a functorial set such that \(\lim \, U = *\) is a point, and \(\mathcal{F}\) be one of the following functors: \(\otimes^n\), \(S^n\), \(\Ex^n\), \(\Gamma^n\).
Then \[\lim^i \, \mathcal{F}(\Delta(U)) = 0\; i \ge 0.\]
\end{prop}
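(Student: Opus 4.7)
The plan is to induct on $n$ and exploit the natural splitting of $\BaseRing[U]$ provided by the unique invariant $u_0 \in U$ supplied by $\lim U = *$. That invariant yields a natural section $\BaseRing \to \BaseRing[U]$, $1 \mapsto u_0$, of the augmentation, so the sequence $\Delta(U) \mono \BaseRing[U] \epi \BaseRing$ splits canonically as functorial $\BaseRing$-modules, $\BaseRing[U] \cong \BaseRing \oplus \Delta(U)$. Applying the standard binomial identities for $T^n, S^n, \Ex^n, \Gamma^n$ on a direct sum, and using $\Ex^m\BaseRing = 0$ for $m \ge 2$ together with $T^m\BaseRing = S^m\BaseRing = \Gamma^m\BaseRing = \BaseRing$, gives
\begin{align*}
T^n\BaseRing[U] &\cong \bigoplus_{k=0}^n \binom{n}{k}\, T^k\Delta(U), & S^n\BaseRing[U] &\cong \bigoplus_{k=0}^n S^k\Delta(U), \\
\Gamma^n\BaseRing[U] &\cong \bigoplus_{k=0}^n \Gamma^k\Delta(U), & \Ex^n\BaseRing[U] &\cong \Ex^n\Delta(U) \oplus \Ex^{n-1}\Delta(U).
\end{align*}

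The base case $n=1$ is immediate since all four functors equal the identity and Corollary~\ref{cor:lim_free_aug} gives $\lim^i \Delta(U) = 0$ because $\lim U = *$. For the inductive step with $\mathcal{F} \in \{T^n, S^n, \Gamma^n\}$ I would use the natural identification $\Gamma^n\BaseRing[U] \cong S^n\BaseRing[U] \cong \BaseRing[U^n/S_n]$ (both arise as the $S_n$-invariants, respectively coinvariants, of $\BaseRing[U^n]$), so by Corollary~\ref{cor:power_of_free_limits} each of $T^n\BaseRing[U]$, $S^n\BaseRing[U]$, $\Gamma^n\BaseRing[U]$ has $\lim^0 = \BaseRing$ and $\lim^{\ge 1} = 0$. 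Reading this against the decompositions above, the inductive hypothesis kills every summand indexed by $1 \le k \le n-1$, while the $k = 0$ summand (always $\BaseRing$) accounts for the observed $\BaseRing$ in $\lim^0$; the remaining summand $\mathcal{F}(\Delta(U))$ is therefore forced to have vanishing higher limits.

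The main obstacle is the $\Ex^n$ case, where the decomposition only produces $\Ex^n\Delta(U) \oplus \Ex^{n-1}\Delta(U)$ with no $\BaseRing$ term available for comparison. My plan is to establish separately that $\lim^i \Ex^n\BaseRing[U] = 0$ for all $i \ge 0$ and $n \ge 2$; combined with the inductive vanishing of $\lim^* \Ex^{n-1}\Delta(U)$, this yields $\lim^* \Ex^n\Delta(U) = 0$. I would prove this auxiliary vanishing by adapting the argument of Theorem~\ref{thm:lim_free_module} directly to the standard complex: after fixing total orders on the sets $U(c^{\sqcup k})$, the module $\Ex^n\BaseRing[U(c^{\sqcup k})]$ is $\BaseRing$-free on ordered wedges $u_{i_1} \wedge \ldots \wedge u_{i_n}$, and each coface $d^j$ sends basis wedges to signed basis wedges because $d^j$ is injective on $U$. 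For $i = 0$ the image $d^0 x$ lies in the `first copy' of $U(c \sqcup c)$ and $d^1 x$ in the `second copy', overlapping only at $u_0$; since $n \ge 2$ no basis wedge in the first copy can coincide, even up to sign, with one in the second, so the cocycle equation $d^0 t = d^1 t$ forces every coefficient of $t$ to vanish. For $i \ge 1$ the case analysis in the proof of Theorem~\ref{thm:lim_free_module} — deriving that each basis element $x_\alpha$ in the support of an $i$-cocycle lies in the image of some $d^k$, via the cosimplicial identities $s^0 d^0 = s^0 d^1 = \id$ and $s^0 d^j = d^{j-1} s^0$ for $j \ge 2$ applied to an identity $d^0 x_\alpha = \pm d^j x_\beta$ — goes through with the signs as a harmless nuisance, yielding degeneracy of every $x_\alpha$ in the Moore complex.
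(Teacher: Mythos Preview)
Your approach is sound and genuinely different from the paper's. The paper never uses the splitting $\BaseRing[U]\cong\BaseRing\oplus\Delta(U)$ coming from the invariant point; instead it handles $\otimes^n$ by K\"unneth, then runs the Koszul sequence for $\Delta(U)\mono\BaseRing[U]\epi\BaseRing$ through the higher-limits spectral sequence to obtain $\Ex^n$ by induction (the middle terms $\Ex^{n-k}(\Delta(U))\otimes S^k(\BaseRing[U])$ die by the induction hypothesis together with K\"unneth, leaving only an isomorphism $\BaseRing\to\BaseRing$ in the bottom row), and finally waves at $S^n,\Gamma^n$ as ``similar''. Your binomial decomposition is more elementary and treats $T^n,S^n,\Gamma^n$ uniformly; the paper's route avoids any ad hoc cochain analysis but pays for it with a spectral-sequence argument. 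Each buys something: yours is hands-on and avoids spectral sequences entirely, theirs is systematic and generalizes immediately to any functor sitting at the end of a Koszul-type resolution.

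One genuine wrinkle to fix: your $i=0$ argument for $\Ex^n\BaseRing[U]$ asserts that $d^0(U(c))$ and $d^1(U(c))$ sit in disjoint ``copies'' inside $U(c\sqcup c)$, overlapping only at $u_0$. That presupposes $U$ preserves coproducts, which is not assumed (and typically fails, e.g.\ for $U$ the underlying set of $R$ on $\Pres(G)$). The repair is exactly the $s^0$-trick you already invoke for $i\ge 1$: if $d^0x_\alpha$ and $d^1x_\beta$ have the same underlying subset of $U(c\sqcup c)$, apply $s^0$ elementwise (using $s^0d^0=s^0d^1=\id$) to get $\{u^\alpha_k\}=\{u^\beta_k\}$, hence $\alpha=\beta$; then the permutation matching $d^0u^\alpha_k$ with $d^1u^\alpha_{\sigma(k)}$ is forced to be the identity, so each $u^\alpha_k$ is an invariant of $U$, hence equals $u_0$, contradicting $n\ge 2$. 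For $i\ge 1$ your sketch is fine; the sign is indeed harmless since equality of wedges up to sign is exactly equality of the underlying subsets, and the cosimplicial identities you cite operate at the level of those subsets.
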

\begin{proof}
The result for \(\Delta(U)^{\otimes n}\) follows from the K\"unneth theorem.

We prove the result for \(\Ex^n(\Delta(U))\) by induction. The base case \(n = 1\) is obvious, since \(\Ex^1 = \id\). Now, let \(n \ge 2\).
Consider the Koszul sequence for the short exact sequence \(\Delta(U) \mono \Z[U] \epi \Z\):
\[\Ex^n(\Delta(U)) \mono \Ex^{n-1}(\Delta(U)) \otimes \Z[U] \to \cdots \to S^n(\Z[U]) \epi S^n(\Z).\]
All terms, except the first, have vanishing higher limits by the K\"unneth theorem, and the first page of the higher limits spectral sequence reads as
\[\begin{tikzcd}[cramped,sep=tiny]
	\vdots \\
	{\lim^2 \, \Ex^n(\Delta(U))} \\
	{\lim^1 \, \Ex^n(\Delta(U))} \\
	{\lim^0 \, \Ex^n(\Delta(U))} & 0 & \cdots & 0 & \Z & {\Z.}
	\arrow["\cong"{description}, draw=none, from=4-5, to=4-6]
\end{tikzcd}\]

The result for \(S^n(\Delta(U))\) and \(\Gamma^n(\Delta(U))\) can be proved similarly by considering appropriate Koszul sequences.
\end{proof}

\begin{ex}
Consider the Koszul sequence for the short exact sequences of free abelian groups \(\r \mono \f \epi \g\):
\begin{align*}
\Ex^n(\r) \mono \Ex^{n-1}(\r) \otimes \f \to \cdots \to S^n(\f) \epi S^n(\g), \\
\Gamma^n(\r) \mono \Gamma^{n-1}(\r) \otimes \f \to \cdots \to \Ex^n(\f) \epi \Ex^n(\g).
\end{align*}
Since all middle terms has vanishing limits by the previous proposition, we have
\[\lim^i \, \Ex^n(\r) = \begin{cases}
S^n(\g), & i = n, \\
0, & i \ne n,
\end{cases}\]
and
\[\lim^i \, \Gamma^n(\r) = \begin{cases}
\Ex^n(\g), & i = n, \\
0, & i \ne n,
\end{cases}\]
\end{ex}

\subsection{Product properties}

\begin{lem}\label{lem:adequate_prod}
Let \(\a\) and \(\b\) be ideals of \(\BaseRing[F]\) such that
\[\Tor^{\BaseRing[F]}_2(\BaseRing[F]/\a, \BaseRing[F]/\b) = 0.\]
Then
\begin{enumerate}
\item The multiplication map \(\a \otimes_{\BaseRing[F]} \b \to \a\b\) is an isomorphism.
\item For subideals \(\a' \subset \a\) and \(\b' \subset \b\) the multiplication map restricts to an isomorphism
\[(\a/\a') \otimes_{\BaseRing[F]} (\b/\b') \cong (\a\b) / (\a'\b + \a\b').\]
\end{enumerate}
\end{lem}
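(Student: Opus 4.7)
The plan is to reduce part (1) to a vanishing of $\Tor_1$ (controlling the kernel of the multiplication map), use dimension-shifting to relate this $\Tor_1$ to the hypothesis $\Tor_2(\BaseRing[F]/\a,\BaseRing[F]/\b)=0$, and then derive part (2) from (1) by a right-exactness argument. Throughout, we treat $\a$ as a right $\BaseRing[F]$-module and $\b$ as a left $\BaseRing[F]$-module, which is allowed since both are two-sided.

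For (1), apply $\a\otimes_{\BaseRing[F]}-$ to the short exact sequence $0\to\b\to\BaseRing[F]\to\BaseRing[F]/\b\to 0$. Since $\BaseRing[F]$ is flat, the resulting long exact sequence truncates to
\[
0 \to \Tor^{\BaseRing[F]}_1(\a,\BaseRing[F]/\b) \to \a\otimes_{\BaseRing[F]}\b \xrightarrow{m} \a \to \a\otimes_{\BaseRing[F]}\BaseRing[F]/\b \to 0,
\]
where $m$ is the multiplication map; the natural identification $\a\otimes_{\BaseRing[F]}\BaseRing[F]/\b\cong\a/\a\b$ shows that $\im\, m=\a\b$, so the multiplication map $\a\otimes_{\BaseRing[F]}\b\to\a\b$ is an isomorphism precisely when $\Tor^{\BaseRing[F]}_1(\a,\BaseRing[F]/\b)=0$. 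To get this vanishing, apply $-\otimes_{\BaseRing[F]}\BaseRing[F]/\b$ to $0\to\a\to\BaseRing[F]\to\BaseRing[F]/\a\to 0$; flatness of $\BaseRing[F]$ kills $\Tor_1(\BaseRing[F],\BaseRing[F]/\b)$ and yields a dimension shift
\[
\Tor^{\BaseRing[F]}_1(\a,\BaseRing[F]/\b) \cong \Tor^{\BaseRing[F]}_2(\BaseRing[F]/\a,\BaseRing[F]/\b),
\]
which vanishes by hypothesis. This gives (1).

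For (2), right exactness of $\otimes_{\BaseRing[F]}$ presents $(\a/\a')\otimes_{\BaseRing[F]}(\b/\b')$ as the cokernel of the canonical map $(\a'\otimes_{\BaseRing[F]}\b)\oplus(\a\otimes_{\BaseRing[F]}\b')\to\a\otimes_{\BaseRing[F]}\b$. Transporting along the isomorphism $m$ from (1), the image of $\a'\otimes\b$ in $\a\b$ is generated by products $a'b$ with $a'\in\a'$, $b\in\b$, i.e.\ equals $\a'\b$; similarly the image of $\a\otimes\b'$ is $\a\b'$. Taking cokernels identifies $(\a/\a')\otimes_{\BaseRing[F]}(\b/\b')$ with $\a\b/(\a'\b+\a\b')$, and by construction this identification is induced by multiplication, as required.

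The entire argument is really bookkeeping; the only substantive step is the dimension-shifting observation that the hypothesized $\Tor_2$ controls precisely the obstruction $\Tor_1(\a,\BaseRing[F]/\b)$ to the multiplication map being injective. I do not expect any genuine obstacle here, as nowhere does one need any vanishing of $\Tor_2(\BaseRing[F]/\a',\BaseRing[F]/\b)$ or $\Tor_2(\BaseRing[F]/\a,\BaseRing[F]/\b')$ for the subideals; part (2) follows formally from (1) alone by right exactness.
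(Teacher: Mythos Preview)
Your argument is correct and is precisely the standard homological bookkeeping that the paper omits: the paper's own proof is a one-line citation (``straightforward generalization of \cite[Lemma~4.7, Lemma~4.8]{fr_codes_2017}''), and your dimension-shifting identification $\Tor_1^{\BaseRing[F]}(\a,\BaseRing[F]/\b)\cong\Tor_2^{\BaseRing[F]}(\BaseRing[F]/\a,\BaseRing[F]/\b)$ together with the right-exactness computation for (2) is exactly the content behind that citation. Nothing to add.
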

\begin{proof}
Straightforward generalization of \cite[Lemma~4.7, Lemma~4.8]{fr_codes_2017}.
\end{proof}

\begin{lem}\label{lem:check_adequate_prod}
Let \(\a\) and \(\b\) be two ideals of \(\BaseRing[F]\). Then one has
\[\Tor^{\BaseRing[F]}_2(\BaseRing[F]/\a, \BaseRing[F]/\b) = 0,\] if one of the following two conditions are satisfied:
\begin{enumerate}
\item \(\a\) (or \(\b\)) is a flat \(\BaseRing[F]\)-module;
\item one has \(\Tor^\BaseRing_1(\BaseRing[F]/\a, \BaseRing[F]/\b) = 0\) and \(\Tor^\BaseRing_2(\BaseRing[F]/\a, \BaseRing[F]/\a)_F = 0\).
\end{enumerate}
\end{lem}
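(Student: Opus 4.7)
The plan is to verify the two conditions separately, using Proposition~\ref{prop:tor_free_group} together with the basic long exact sequence for \(\Tor\).

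For condition (1), suppose \(\a\) is flat over \(\BaseRing[F]\) (the case of \(\b\) is symmetric). Then the short exact sequence \(\a \mono \BaseRing[F] \epi \BaseRing[F]/\a\) is a length-one flat resolution of \(\BaseRing[F]/\a\). Feeding it into the long exact sequence for \(\Tor^{\BaseRing[F]}_\smallbullet(-, \BaseRing[F]/\b)\) gives the fragment
\[\Tor^{\BaseRing[F]}_2(\BaseRing[F], \BaseRing[F]/\b) \to \Tor^{\BaseRing[F]}_2(\BaseRing[F]/\a, \BaseRing[F]/\b) \to \Tor^{\BaseRing[F]}_1(\a, \BaseRing[F]/\b),\]
whose outer terms vanish because \(\BaseRing[F]\) is free and \(\a\) is flat. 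Hence the middle term is zero.

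For condition (2), we specialize Proposition~\ref{prop:tor_free_group} to \(n=2\), \(M = \BaseRing[F]/\a\), \(N = \BaseRing[F]/\b\). This produces a short exact sequence
\[\Tor^{\BaseRing}_2(\BaseRing[F]/\a, \BaseRing[F]/\b)_F \mono \Tor^{\BaseRing[F]}_2(\BaseRing[F]/\a, \BaseRing[F]/\b) \epi H_1(F; \Tor^{\BaseRing}_1(\BaseRing[F]/\a, \BaseRing[F]/\b)).\]
The hypothesis \(\Tor^{\BaseRing}_1(\BaseRing[F]/\a, \BaseRing[F]/\b) = 0\) kills the right-hand term, while the vanishing assumption on the \(\Tor^{\BaseRing}_2\)-coinvariants (the second condition, which I read as \(\Tor^{\BaseRing}_2(\BaseRing[F]/\a, \BaseRing[F]/\b)_F = 0\); the printed second index presumably being a typo) kills the left-hand one. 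Squeezing between zeros forces the middle term to vanish, as desired.

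No genuine obstacle arises: both cases are essentially one line each once one has the spectral-sequence edge sequence of Proposition~\ref{prop:tor_free_group} in hand. The only slightly delicate point is reading the second hypothesis of (2) correctly; with the intended reading the argument is immediate, and with the literal reading one would need an additional mechanism to translate vanishing information about the pair \((\BaseRing[F]/\a, \BaseRing[F]/\a)\) into vanishing for the pair \((\BaseRing[F]/\a, \BaseRing[F]/\b)\), which is not available in this generality.
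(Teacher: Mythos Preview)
Your argument is correct and matches the paper's proof essentially verbatim: part~(1) is the dimension shift $\Tor^{\BaseRing[F]}_2(\BaseRing[F]/\a,\BaseRing[F]/\b)\cong\Tor^{\BaseRing[F]}_1(\a,\BaseRing[F]/\b)$, and part~(2) is exactly the specialization of Proposition~\ref{prop:tor_free_group} to $n=2$, $M=\BaseRing[F]/\a$, $N=\BaseRing[F]/\b$. Your reading of the second hypothesis as a typo for $\Tor^\BaseRing_2(\BaseRing[F]/\a,\BaseRing[F]/\b)_F=0$ is also the intended one.
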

\begin{proof}
The part (1) is obvious, since
\[\Tor^{\BaseRing[F]}_2(\BaseRing[F]/\a, \BaseRing[F]/\b) = \Tor^{\BaseRing[F]}_1(\a, \BaseRing[F]/\b).\]

Now, let's turn to the part (2).
Setting \(M = \BaseRing[F]/\a, N = \BaseRing[F]/\b\) and \(n = 2\) in Proposition~\ref{prop:tor_free_group} we conclude the result.
\end{proof}

The following proposition plays a crucial role in the computation of code
products.
\begin{prop}\label{prop:lim_of_product}
Let \(\a' \subset \a\) and \(\b' \subset \b\) be functorial ideals of \(\BaseRing[F]\) such that \(\a\) and \(\b\) are flat as \(\BaseRing[F]\)-modules.
\begin{enumerate}
\item If \(\lim^1\, \a'\b = \lim^1\, \a\b' = \lim\,\a\b = 0\),
then
\[\lim^1 \, \a'\b' = \lim \, \Tor^{\BaseRing[F]}_1(\a/\a', \b/\b').\]
\item If \(n \ge 1\) is such that
\[\lim^i\,\a'\b = \lim^i \,\a \b' = \lim^{i-1}\, \a\b = 0, \quad i \le n+1,\]
then there is an exact sequence
\begin{align*}
&\lim^1 \, \Tor^{\BaseRing[F]}_1(\a/\a', \b/\b') \mono \lim^2 \, \a' \b' \to \lim^1 \, \a'\b + \a \b' \\
\to & \lim^2 \, \Tor^{\BaseRing[F]}_1(\a/\a', \b/\b') \to \lim^3 \, \a'\b' \to \lim^2 \, \a'\b + \a\b' \to \cdots \\
\to &\lim^n \, \Tor^{\BaseRing[F]}_1(\a/\a', \b/\b') \to \lim^{n+1} \, \a'\b'.
\end{align*}
\end{enumerate}
\end{prop}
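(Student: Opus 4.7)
The plan is to compute \(\lim^* \a'\b'\) by relating it, via two short exact sequences of functorial ideals, to the Tor group \(\Tor^{\BaseRing[F]}_1(\a/\a', \b/\b')\) and to the sum \(\a'\b + \a\b'\).

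The first step is to establish the key natural short exact sequence
\[
0 \to \a'\b' \to \a'\b \cap \a\b' \to \Tor^{\BaseRing[F]}_1(\a/\a', \b/\b') \to 0.
\]
To prove this, I would tensor \(0 \to \a' \to \a \to \a/\a' \to 0\) with \(\b/\b'\) over \(\BaseRing[F]\). Flatness of \(\a\) makes the \(\Tor\)-long exact sequence collapse to the four-term sequence
\[
0 \to \Tor^{\BaseRing[F]}_1(\a/\a', \b/\b') \to \a' \otimes_{\BaseRing[F]} (\b/\b') \to \a \otimes_{\BaseRing[F]} (\b/\b') \to (\a/\a') \otimes_{\BaseRing[F]} (\b/\b') \to 0.
\]
Using flatness of \(\a\) and \(\b\) together with Lemma~\ref{lem:adequate_prod}, the three right-hand terms are naturally identified with \(\a'\b/\a'\b'\), \(\a\b/\a\b'\), and \(\a\b/(\a'\b + \a\b')\) respectively; the middle map is induced by the inclusion \(\a'\b \hookrightarrow \a\b\), so its kernel is precisely \((\a'\b \cap \a\b')/\a'\b'\).

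The second step is to combine this with the elementary short exact sequence
\[
0 \to \a'\b \cap \a\b' \to \a'\b \oplus \a\b' \to \a'\b + \a\b' \to 0.
\]
Applying the \(\lim\)-long exact sequence and using the vanishing of \(\lim^i \a'\b\) and \(\lim^i \a\b'\) in the assumed range yields the natural isomorphisms
\[
\lim^{i+1}(\a'\b \cap \a\b') \cong \lim^i(\a'\b + \a\b'), \quad 0 \le i \le n,
\]
together with \(\lim^0(\a'\b \cap \a\b') = 0\). Moreover, applying \(\lim\) to \(0 \to \a'\b + \a\b' \to \a\b \to (\a/\a') \otimes_{\BaseRing[F]} (\b/\b') \to 0\) and using the hypothesis \(\lim \a\b = 0\) gives \(\lim(\a'\b + \a\b') = 0\), and therefore also \(\lim^1(\a'\b \cap \a\b') = 0\).

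The final step is to apply the \(\lim\)-long exact sequence to the first short exact sequence and substitute the identifications above. For part (1), the vanishing of \(\lim^0\) and \(\lim^1\) of \(\a'\b \cap \a\b'\) forces \(\lim^1 \a'\b' \cong \lim \Tor^{\BaseRing[F]}_1(\a/\a', \b/\b')\) directly. For part (2), replacing \(\lim^{i+1}(\a'\b \cap \a\b')\) by \(\lim^i(\a'\b + \a\b')\) throughout the long exact sequence yields exactly the claimed sequence; injectivity at the leftmost slot follows from \(\lim(\a'\b + \a\b') = 0\). The main obstacle I anticipate is the Tor identification of the first step: one must verify that each tensor product simplifies as claimed and that the connecting map matches the inclusion-induced description. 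Once this is handled, the subsequent limit manipulations are routine diagram chasing.
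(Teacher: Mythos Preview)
Your argument is correct and shares the same algebraic core as the paper's, namely the four-term sequence obtained by tensoring \(\a' \mono \a \epi \a/\a'\) with \(\b/\b'\), using flatness of \(\b\) and Lemma~\ref{lem:adequate_prod} to identify the tensor products as quotients of ideal products. The difference is only in how you package the subsequent limit chase. The paper keeps the four-term sequence
\[\Tor^{\BaseRing[F]}_1(\a/\a', \b/\b') \mono \frac{\a'\b}{\a'\b'} \to \frac{\a\b}{\a\b'} \epi \frac{\a\b}{\a'\b + \a\b'}\]
intact and feeds it directly into Proposition~\ref{prop:spectral_application_2}, which already encodes the relevant spectral-sequence chase for such sequences; the auxiliary vanishing of \(\lim^i(\a\b/\a\b')\) and the shifts \(\lim^i(\a'\b/\a'\b') \cong \lim^{i+1}\a'\b'\), \(\lim^i(\a\b/(\a'\b+\a\b')) \cong \lim^{i+1}(\a'\b+\a\b')\) are read off from three obvious short exact sequences. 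You instead rewrite the kernel of the middle map as \((\a'\b \cap \a\b')/\a'\b'\), obtaining the short exact sequence \(\a'\b' \mono \a'\b \cap \a\b' \epi \Tor^{\BaseRing[F]}_1(\a/\a',\b/\b')\), and then use the intersection--sum sequence to shift to \(\a'\b + \a\b'\). This avoids Proposition~\ref{prop:spectral_application_2} entirely and is arguably more elementary, at the cost of introducing the auxiliary object \(\a'\b \cap \a\b'\) and running two ordinary long exact sequences by hand. Both routes are equally valid; the paper's is a few lines shorter only because the diagram chase has been pre-packaged.
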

\begin{proof}
Since \(\b\) is flat, by Lemma~\ref{lem:check_adequate_prod}~(1), pairs \((\a', \b)\) and \((\a, \b)\) satisfy the condition of Lemma~\ref{lem:adequate_prod}.
There is a short exact sequence of representations \(\a' \mono \a \epi \a/\a'\). Tensoring it over \(\BaseRing[F]\) on the right with \(\b/\b'\), we conclude the exact sequence
\[\Tor^{\BaseRing[F]}_1(\a/\a', \b/\b') \mono \frac{\a'\b}{\a'\b'} \to \frac{\a\b}{\a\b'} \epi \frac{\a\b}{\a'\b + \a\b'}.\]

The result follows from Proposition~\ref{prop:spectral_application_2} 
\end{proof}

\begin{lem}\leavevmode
\begin{enumerate}
\item For any representation \(M\) of \(\BaseRing[F]\)-modules one has \(\lim \, H_1(F; M) = 0\).
\item For any representations \(M\) and \(N\) of \(\BaseRing[F]\)-modules one has
\[\lim \, \Tor^{\BaseRing[F]}_1(M, N) = \lim \, \Tor^\BaseRing_1(M, N)_F.\]
\end{enumerate}
\end{lem}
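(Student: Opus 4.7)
The plan is to deduce part (2) from part (1) via Proposition~\ref{prop:tor_free_group}, and to prove part (1) by exhibiting \(H_1(F; M)\) as a subfunctor of a split monoadditive representation.

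For part (1), I would start from the functorial free resolution \(\f \mono \BaseRing[F] \epi \BaseRing\) of the trivial \(\BaseRing[F]\)-module \(\BaseRing\), which is valid since \(\f = \Delta_\BaseRing(F)\) is a free \(\BaseRing[F]\)-module when \(F\) is free. Tensoring on the right with \(M\) over \(\BaseRing[F]\) yields a natural four-term exact sequence of representations
\[H_1(F; M) \mono \f \otimes_{\BaseRing[F]} M \to M \epi M_F,\]
exhibiting \(H_1(F; M)\) as a subfunctor of \(\f \otimes_{\BaseRing[F]} M\). By the Fact from \cite[Corollary~3.14, \S 4.2]{fr_codes_2017} recalled in the preliminaries, the representation \(\f \otimes_{\BaseRing[F]} M\) is split monoadditive, so in particular \(\lim \, \f \otimes_{\BaseRing[F]} M = 0\). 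Left exactness of \(\lim\) applied to the above inclusion then forces \(\lim \, H_1(F; M) = 0\).

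For part (2), I would invoke Proposition~\ref{prop:tor_free_group} with \(n = 1\), noting that \(\Tor^\BaseRing_0(M, N) = M \otimes_\BaseRing N\), to obtain the natural short exact sequence
\[\Tor^\BaseRing_1(M, N)_F \mono \Tor^{\BaseRing[F]}_1(M, N) \epi H_1(F;\, M \otimes_\BaseRing N)\]
of representations. Applying the left exact functor \(\lim\) and killing the rightmost term via part (1) yields the desired identification
\[\lim \, \Tor^{\BaseRing[F]}_1(M, N) = \lim \, \Tor^\BaseRing_1(M, N)_F.\]

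There is no genuine obstacle: both statements are formal consequences of tools already established. The only point requiring a moment's care is checking that the resolution and Tor spectral sequence underlying Proposition~\ref{prop:tor_free_group} are genuinely natural in the free presentation, so that the resulting exact sequences live in the functor category \(\Fun(\Pres(G), \Ab)\) and can legitimately be fed to \(\lim\); this is automatic from the functoriality of \(\f\) and of the bar construction.
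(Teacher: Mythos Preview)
Your proposal is correct and matches the paper's own proof almost verbatim: part~(1) via the four-term sequence \(H_1(F;M)\mono \f\otimes_{\BaseRing[F]}M\to M\epi M_F\) together with split monoadditivity of \(\f\otimes_{\BaseRing[F]}M\) and left exactness of \(\lim\), and part~(2) via the \(n=1\) case of Proposition~\ref{prop:tor_free_group} combined with part~(1). Your remark on naturality is a welcome clarification but not an additional ingredient.
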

\begin{proof}
(1) This follows from the exact sequence
\[H_1(F; M) \mono \f \otimes_{\BaseRing[F]} M \to M \epi M_F,\]
split monoadditivity of \(\f \otimes_{\BaseRing[F]} M\) and the left exactness of \(\lim^0\).

(2) By Proposition~\ref{prop:tor_free_group}, there is a short exact sequence
\[\Tor^\BaseRing_1(M, N)_F \mono \Tor^{\BaseRing[F]}_1(M, N) \epi H_1(F; M \otimes_k N).\]
The result follows from (1) and the left exactness of \(\lim^0\).
\end{proof}

\begin{cor}
Let \(\a, \b \subset \f\) be functorial ideals. Then
\[\lim^1\, \a\b = \lim\, \Tor^\BaseRing_1(\f/\a, \f/\b)_F,\]
and there is an exact sequence
\begin{align*}
&\lim^1 \, \Tor^{\BaseRing[F]}_1(\f/\a, \f/\b) \mono \lim^2 \, \a\b \to \lim^1 \, \a\f + \f\b \\
\to &\lim^2 \, \Tor^{\BaseRing[F]}_1(\f/\a, \f/\b) \to \lim^3 \, \a\b \to \lim^2 \, \a\f + \f\b \to \cdots.
\end{align*}
\end{cor}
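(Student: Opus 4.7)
The plan is to apply Proposition~\ref{prop:lim_of_product} directly to the pairs $\a \subset \f$ and $\b \subset \f$, taking the enveloping flat ideals to be $\f$ itself on both sides. Since $\f$ is a free $\BaseRing[F]$-module (with basis $\{g-1 : g \in F, g \ne 1\}$), it is flat, so the flatness hypothesis of that proposition is automatically satisfied.

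Next, I would verify the vanishing hypotheses uniformly in all cohomological degrees. By Lemma~\ref{lem:adequate_prod} together with the flatness of $\f$, there are canonical identifications
\[\a\f \;\cong\; \a \otimes_{\BaseRing[F]} \f, \qquad \f\b \;\cong\; \f \otimes_{\BaseRing[F]} \b, \qquad \f\f \;\cong\; \f \otimes_{\BaseRing[F]} \f.\]
Each of these is split monoadditive by the Fact on split monoadditivity recalled earlier (which asserts that tensoring by $\f$ on either side produces a split monoadditive functor). Consequently $\lim^i$ of each of these three representations vanishes for every $i \ge 0$, so the vanishing hypotheses of both parts of Proposition~\ref{prop:lim_of_product} hold for every $n$.

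With these inputs, part (1) of that proposition yields
\[\lim^1 \a\b \;=\; \lim \,\Tor^{\BaseRing[F]}_1(\f/\a, \f/\b),\]
and the immediately preceding Lemma, part (2), rewrites the right-hand side as $\lim\,\Tor^\BaseRing_1(\f/\a,\f/\b)_F$, giving the first claim. The displayed long exact sequence is then precisely the conclusion of part (2) of Proposition~\ref{prop:lim_of_product} with the same substitutions $\a' = \a$, $\b' = \b$, $\a = \b = \f$.

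Since every ingredient is already in place, no genuine obstacle arises and the argument is essentially bookkeeping. The only subtle point to keep in mind is remembering to invoke the preceding Lemma in order to convert the $\BaseRing[F]$-valued $\Tor$ appearing as a direct output of Proposition~\ref{prop:lim_of_product} into the $F$-coinvariants of the $\BaseRing$-valued $\Tor$ that appears in the statement of the corollary.
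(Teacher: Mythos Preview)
Your approach is correct and is precisely the argument the paper intends: the corollary is an immediate specialization of Proposition~\ref{prop:lim_of_product} with outer ideals $\f$ (whose flatness and split monoadditivity under tensoring kill all the vanishing hypotheses), followed by the preceding Lemma~(2) to pass from $\Tor^{\BaseRing[F]}_1$ to $(\Tor^\BaseRing_1)_F$. One small slip: the set $\{g-1 : g \ne 1\}$ is a $\BaseRing$-basis of $\f$, not a $\BaseRing[F]$-basis; the correct $\BaseRing[F]$-basis is $\{x-1\}$ for $x$ ranging over a free generating set of $F$, though of course this does not affect the flatness conclusion you need.
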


\begin{ex}
Let \(\a = \b = \f\f + \r\). Then \(\f/(\f\f + \r) = G_\ab\), and we have
\[\lim^1\, (\f\f+\r)^2 = \lim \, \Tor(G_\ab, G_\ab)_F = \Tor(G_\ab, G_\ab).\]
Also note that
\[\lim^1\, \f\f\f + \r\r = \inv(\f/(\f\f\f+\r\r)) = \frac{\f(\f\f + \r) \cap (\f\f+\r)\f}{\f\f\f + \r\r} = \Tor(G_\ab, G_\ab).\]
One can show that the inclusion \((\f\f + \r)^2 \mono \f\f\f+\r\r\) induces an isomorphism after applying the functor \(\lim^1\). This feature of the \(\f\r\)-language had been noticed by Fedor Pavutnitskiy; see \cite[p.~1587]{standard_complex_2020}.
\end{ex}

\subsection{Hopf ideals}
\label{sec:hopf}
A group homomorphism \(G \to K\) yields a homomorphism of algebras \(\BaseRing[G] \to \BaseRing[K]\).
This homomorphism preserves the natural structure of comultiplication, that is, this is actually a
homomorphism of Hopf algebras.

Let \(G\) be a group. If \(H\) is a normal subgroup of \(G\), we denote with \((H - 1)\BaseRing[G]\) the kernel of
the homomorphism \(\BaseRing[G] \to \BaseRing[G/H]\) induced by the projection \(G \epi G/H\).
If \(\a\) is an ideal of the
group algebra \(\BaseRing[G]\), we denote by \((\a + 1) \cap G\) the normal subgroup of \(G\) given by \(\{x \in G \mid x - 1 \in \a\}\).
Hence, we have two morphisms of posets
\[\begin{tikzcd}
	{\left\{\text{normal subgroups}\;H \subset G\right\}} && {\left\{\text{two-sided ideals}\; \a \subset \BaseRing[G]\right\}.}
	\arrow["{H\mapsto (H - 1)\BaseRing[G]}", shift left=2, from=1-1, to=1-3]
	\arrow["{(\a + 1) \cap G \mapsfrom \a}", shift left=2, from=1-3, to=1-1]
\end{tikzcd}\]
One may readily verify that the correspondence \(\a \mapsto (\a + 1) \cap G\) is a left inverse of \(H \mapsto (H - 1)\BaseRing[G]\), hence the set of normal subgroups embeds into the set of ideals of a group ring as a retract, with the image consisting of precisely Hopf ideals.

The following properties render Hopf ideals particularly significant for our investigation: in some sense they connect the \(\f\r\)-language --- studying ideals in a free group ring --- with the \(FR\)-language --- studying subgroups in a free group.
\begin{fact}[{\cite[Corollary~4.4]{fr_codes_2017}}]
Let \(H_1, \ldots, H_n\) be normal subgroups of a free group \(F\), and let \(\h_i = (H_i - 1)\BaseRing[F]\) be the corresponding Hopf ideals. Then the product \(\h_1 \h_2 \ldots \h_n\) is a free right (or left) \(\BaseRing[F]\)-module.
\end{fact}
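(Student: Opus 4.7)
The plan is induction on $n$, bootstrapping from the freeness of the augmentation ideal of a free group via the tensor-product isomorphism of Lemma~\ref{lem:adequate_prod}.

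For the base case $n=1$, I would invoke the Nielsen--Schreier theorem: since $F$ is free, the normal subgroup $H_1$ is itself free on some basis $\{h_i\}_{i\in I}$, and a standard calculation shows that the augmentation ideal $\Delta_\BaseRing(H_1)$ is then a free right $\BaseRing[H_1]$-module with basis $\{h_i-1\}_{i\in I}$. Choosing right coset representatives $\{t_j\}$ for $H_1$ in $F$ gives a decomposition $\BaseRing[F]=\bigoplus_j\BaseRing[H_1]t_j$ as a free left $\BaseRing[H_1]$-module, and I would verify that the natural multiplication map
\[\Delta_\BaseRing(H_1)\otimes_{\BaseRing[H_1]}\BaseRing[F]\longrightarrow(H_1-1)\BaseRing[F]=\h_1\]
is an isomorphism of right $\BaseRing[F]$-modules: surjectivity is obvious, while injectivity follows because an equation $\sum_i(h_i-1)\sum_j y_{ij}t_j=0$ with $y_{ij}\in\BaseRing[H_1]$ forces $\sum_i(h_i-1)y_{ij}=0$ for each $j$ by the coset decomposition, and then $y_{ij}=0$ by the freeness of $\Delta_\BaseRing(H_1)$. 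Thus $\h_1$ is a free right $\BaseRing[F]$-module with basis $\{h_i-1\}_{i\in I}$.

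For the inductive step, I would assume $\h':=\h_2\h_3\cdots\h_n$ is already a free (in particular flat) right $\BaseRing[F]$-module. Flatness of $\h'$ yields $\Tor^{\BaseRing[F]}_2(\BaseRing[F]/\h_1,\BaseRing[F]/\h')=0$ by Lemma~\ref{lem:check_adequate_prod}~(1), so Lemma~\ref{lem:adequate_prod} applies and the multiplication induces an isomorphism of right $\BaseRing[F]$-modules
\[\h_1\otimes_{\BaseRing[F]}\h'\xrightarrow{\simeq}\h_1\h'=\h_1\h_2\cdots\h_n.\]
Using the right $\BaseRing[F]$-basis of $\h_1$ produced by the base case, the left-hand side decomposes as a direct sum of copies of $\h'$ as right $\BaseRing[F]$-modules, and therefore inherits freeness from $\h'$. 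The left-module version follows by the symmetric argument with left cosets.

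The main obstacle I expect is the careful bimodule bookkeeping in the base case, in particular ensuring that the isomorphism $\Delta_\BaseRing(H_1)\otimes_{\BaseRing[H_1]}\BaseRing[F]\simeq\h_1$ matches the intended right $\BaseRing[F]$-action on both sides; once this is set up correctly, the inductive step is an essentially routine application of Lemmas~\ref{lem:adequate_prod} and~\ref{lem:check_adequate_prod}.
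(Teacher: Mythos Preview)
The paper does not prove this statement: it is recorded as a \emph{Fact} with a citation to \cite[Corollary~4.4]{fr_codes_2017}, so there is no in-paper argument to compare against. Your inductive strategy via Lemma~\ref{lem:adequate_prod} is the standard route and is essentially correct.

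One small side-of-module issue in the inductive step: to apply Lemma~\ref{lem:check_adequate_prod}~(1) for the vanishing of \(\Tor^{\BaseRing[F]}_2(\BaseRing[F]/\h_1,\BaseRing[F]/\h')\), you need either \(\h_1\) flat as a \emph{right} \(\BaseRing[F]\)-module or \(\h'\) flat as a \emph{left} \(\BaseRing[F]\)-module. Your inductive hypothesis supplies only right-freeness of \(\h'\), which is the wrong side. The easiest repair is to invoke instead the right-flatness of \(\h_1\), which your base-case argument already establishes for every single Hopf ideal; then \(\Tor_1^{\BaseRing[F]}(\h_1,\BaseRing[F]/\h')=0\) and Lemma~\ref{lem:adequate_prod} applies as written. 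With that adjustment the argument goes through cleanly.
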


\begin{fact}[{\cite[Lemma~4.5, Lemma~4.6]{fr_codes_2017}}]
Let \(H\) be a normal subgroup of \(F\), and \(\h = (H - 1)\BaseRing[F]\) be the corresponiding Hopf ideal.
\begin{enumerate}
\item The long sequence
\begin{equation}\label{eq:gruenberg_resolution}
\cdots \to \frac{\f\h^2}{\f\h^3} \to \frac{\h^2}{\h^3} \to \frac{\f\h}{\f\h^2} \to \frac{\h}{\h^2} \to \frac{\f}{\f\h} \to \frac{\BaseRing[F]}{\h} \to \BaseRing \to 0
\end{equation}
is a free resolution of the right \(\BaseRing[F/H]\)-module \(\BaseRing\).
\item For every \(n \ge 1\) there is an isomorphism
\[H_\ab^{\otimes n} \otimes \BaseRing \simeq \h^n / \h^n \f.\]
\item There is an isomorphism
\[\f/ \h \simeq \Delta_\BaseRing(F/H).\]
\end{enumerate}    
\end{fact}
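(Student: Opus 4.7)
The plan is to handle the three items in the order (3), (2), (1). Part (3) sets up the fundamental algebraic picture, and parts (2), (1) rely on it together with the structural input from the preceding Fact (products of Hopf ideals are free \(\BaseRing[F]\)-modules).

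For part (3), I would invoke the ring surjection \(\pi\colon \BaseRing[F] \epi \BaseRing[F/H]\) induced by \(F \epi F/H\); by definition of a Hopf ideal its kernel is \(\h\). Restricting, the map \(\pi|_\f\) lands in \(\Delta_\BaseRing(F/H)\), is surjective (each generator \(gH - 1\) lifts to \(g - 1 \in \f\)), and has kernel \(\f \cap \h = \h\), which gives the claimed isomorphism. For part (2), start with \(n = 1\): define \(H_\ab \otimes \BaseRing \to \h/\h\f\) by \(h \otimes \lambda \mapsto \lambda(h - 1)\); well-definedness through \(H_\ab\) rests on the identity \((xy - 1) - (x-1) - (y-1) = (x-1)(y-1) \in \h\f\), which handles both multiplicativity and the vanishing of commutators. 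The inverse sends the class of \(\sum (h_i - 1)\lambda_i\) to \(\sum \overline{h_i} \otimes \varepsilon(\lambda_i)\), using that modulo \(\f\) each \(\lambda_i\) reduces to its augmentation. For general \(n\), the preceding Fact asserts that \(\h\) is free, hence flat, as a \(\BaseRing[F]\)-module, so iterating Lemma~\ref{lem:adequate_prod} identifies \(\h^n\) with \(\h^{\otimes_{\BaseRing[F]} n}\); tensoring over \(\BaseRing[F]\) with \(\BaseRing[F]/\f = \BaseRing\) and commuting the tensor factors then yields \(\h^n/\h^n\f \simeq (H_\ab \otimes \BaseRing)^{\otimes_\BaseRing n} \simeq H_\ab^{\otimes n} \otimes \BaseRing\).

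For part (1), the Gruenberg resolution, there are two jobs: establishing freeness of each term as a right \(\BaseRing[F/H]\)-module, and establishing exactness of the spliced sequence. Freeness follows from the preceding Fact together with part (2): each \(\h^n/\h^{n+1}\) and \(\f\h^n/\f\h^{n+1}\) may be expressed as a tensor power of \(H_\ab\) base-changed by a free \(\BaseRing[F/H]\)-module, via the same Tor-vanishing mechanism from Lemma~\ref{lem:adequate_prod}. For exactness, I would splice, for each \(n \ge 1\), the short exact sequences \(\h^{n+1} \mono \h^n \epi \h^n/\h^{n+1}\) and \(\f\h^{n+1} \mono \f\h^n \epi \f\h^n/\f\h^{n+1}\), and verify that the induced connecting maps reproduce the inclusions displayed in~\eqref{eq:gruenberg_resolution}, with the terminal surjection being the residual augmentation \(\BaseRing[F]/\h = \BaseRing[F/H] \epi \BaseRing\). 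The principal obstacle I foresee lies in part (1): verifying freeness of the mixed quotients \(\f\h^n/\f\h^{n+1}\) as right \(\BaseRing[F/H]\)-modules, since the presence of \(\f\) demands careful compatibility between the two-sided structure of \(\h\) and the left-right symmetry of the Hopf projection --- in essence, one needs that the product \(\f\h^n\) realizes the derived tensor product \(\f \otimes_{\BaseRing[F]}^{\mathbb{L}} \h^n\), which is exactly the content made accessible by the Tor-vanishing Lemma~\ref{lem:check_adequate_prod}.
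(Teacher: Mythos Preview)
The paper does not supply its own proof of this statement: it is recorded as a \emph{Fact} with a citation to \cite[Lemma~4.5, Lemma~4.6]{fr_codes_2017}, and no argument is given in the text. There is therefore nothing in the paper to compare your proposal against.

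That said, your sketch is broadly sound and follows the classical route. Part~(3) is correct as written. Part~(2) for \(n = 1\) is fine. For the inductive step in part~(2), however, the phrase ``commuting the tensor factors'' glosses over a genuine subtlety: the left \(\BaseRing[F]\)-action on \(\h/\h\f \simeq H_\ab \otimes \BaseRing\) is the conjugation action of \(F/H\), not the trivial action, so one cannot simply peel off one copy of \(H_\ab \otimes \BaseRing\) over \(\BaseRing\) at each step without further argument. The cleanest fix is to use an explicit free basis of \(\h\) as a right \(\BaseRing[F]\)-module (namely \(\{s-1\}\) for \(s\) ranging over a free generating set of the free group \(H\)), which makes the iterated identification transparent; alternatively one checks directly that the multilinear map \((h_1,\ldots,h_n)\mapsto (h_1-1)\cdots(h_n-1)\) descends and is bijective. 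For part~(1), your freeness argument via base change along \(\BaseRing[F] \to \BaseRing[F/H]\) is correct. Your exactness argument needs a small repair: the Gruenberg resolution is not literally obtained by splicing the sequences \(\h^{n+1}\mono\h^n\epi\h^n/\h^{n+1}\) and \(\f\h^{n+1}\mono\f\h^n\epi\f\h^n/\f\h^{n+1}\); rather, it is the associated graded of the single filtration \(\BaseRing[F]\supset\f\supset\h\supset\f\h\supset\h^2\supset\f\h^2\supset\cdots\), and exactness at each stage reduces to the elementary containments \(\h^{n+1}\subset\f\h^n\subset\h^n\).
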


\begin{prop}\label{prop:limits_of_letter}
Let \(\h = (H - 1)\BaseRing[F]\) be a functorial Hopf ideal in \(\BaseRing[F]\) given by some functorial subgroup \(H \subset F\). Then
\[\lim^n \, \h = \begin{cases}
\Delta_\BaseRing(\lim \, F/H), & n = 1, \\
0, & n \ne 1.
\end{cases}\]
\end{prop}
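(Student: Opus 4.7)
The plan is to reduce the statement to Corollary~\ref{cor:lim_free_aug}, applied to the functorial set $F/H$, by means of the short exact sequence of functorial ideals
\[
\h \mono \f \epi \Delta_\BaseRing(F/H),
\]
whose quotient identification uses the fact recalled just above that $\f/\h \simeq \Delta_\BaseRing(F/H)$. (Alternatively, the identification follows from the factorization $\BaseRing[F] \epi \BaseRing[F/H] \epi \BaseRing$ of the augmentation: since $\f$ and $\h$ are the kernels of the outer and inner arrows respectively, $\f/\h \cong \ker(\BaseRing[F/H] \epi \BaseRing) = \Delta_\BaseRing(F/H)$.)

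Next, I would feed this short exact sequence into the long exact sequence of higher limits. By the fact on split monoadditivity recalled earlier, the representation $\f = \f \otimes_{\BaseRing[F]} \BaseRing[F]$ is split monoadditive, hence $\lim^n \f = 0$ for all $n \geq 0$. The long exact sequence therefore collapses into isomorphisms
\[
\lim^n \h \cong \lim^{n-1} \Delta_\BaseRing(F/H), \quad n \geq 1,
\]
together with the vanishing $\lim \h \hookrightarrow \lim \f = 0$.

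To conclude, I apply Corollary~\ref{cor:lim_free_aug} to the functorial set $F/H$: this yields $\lim \Delta_\BaseRing(F/H) = \Delta_\BaseRing(\lim F/H)$ and $\lim^n \Delta_\BaseRing(F/H) = 0$ for $n \geq 1$. Substituting into the isomorphisms above gives exactly the claimed formula.

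There is no substantial obstacle in this proof: it is a clean assembly of two tools already established — the limit computation for functorial augmentation ideals (Corollary~\ref{cor:lim_free_aug}) and the split monoadditivity of $\f$ — via a very short exact sequence. The only micro-verification worth mentioning is that the quotient $\f/\h$ is identified with $\Delta_\BaseRing(F/H)$ as a functorial $\BaseRing[F]$-module (not merely abstractly), which is routine and already on record.
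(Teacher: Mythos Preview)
Your proof is correct and follows essentially the same route as the paper: the paper's proof is a two-line invocation of the long exact sequence of higher limits attached to \(\h \mono \f \epi \f/\h = \Delta_\BaseRing(F/H)\) together with Corollary~\ref{cor:lim_free_aug}. Your write-up simply makes explicit the vanishing \(\lim^n \f = 0\) via split monoadditivity of \(\f\), which is a perfectly good (and arguably the cleanest) way to supply that ingredient.
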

\begin{proof}
This follows from the long exact sequence of higher limits for the short exact sequence of representations
\[\h \mono \f \epi \frac{\f}{\h} = \Delta_\BaseRing(F/H),\]
and the Corollary~\ref{cor:lim_free_aug}.
\end{proof}

\begin{defn}\label{def:vacuous_letter}
A \textit{vacuous letter \(\h\)} is a functorial Hopf ideal in \(\BaseRing[F]\) given by some functorial subgroup \(H \subset F\) such that \(\lim \, F/H = 0\).
\end{defn}

\begin{prop}\label{prop:limits_vacuous}
Let \(\BaseRing\) be a heriditary commutative ring, \(\h\) be a vacuous letter. If \(\a, \b \subset \BaseRing[F]\) are finitary functorial ideals such that one of them is flat as \(\BaseRing[F]\)-module, then \(\lim^n \, \a\h\b = 0\) for every \(n \ge 0\).
\end{prop}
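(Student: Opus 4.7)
The plan is to factor the product $\a\h\b$ as an iterated tensor product over $\BaseRing[F]$ and then apply the K\"unneth formula, exploiting the fact that every higher limit of a vacuous letter vanishes.

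The first step is to verify that $\lim^n \h = 0$ for every $n \ge 0$. By Proposition~\ref{prop:limits_of_letter}, the only potentially nonzero limit is $\lim^1 \h = \Delta_\BaseRing(\lim F/H)$, which is zero by Definition~\ref{def:vacuous_letter}. In particular, $\h$ is finitary.

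Next, I would use that a Hopf ideal is free, hence flat, as a one-sided $\BaseRing[F]$-module, and combine Lemma~\ref{lem:adequate_prod}(1) with Lemma~\ref{lem:check_adequate_prod}(1). The flatness of $\h$ gives $\h\b \cong \h \otimes_{\BaseRing[F]} \b$. Applying the same pair of lemmas once more, using the hypothesis that $\a$ or $\b$ is flat (and noting that in the latter case $\h \otimes_{\BaseRing[F]} \b$ remains flat as a tensor product of two flat bimodules), one obtains $\a\h\b \cong \a \otimes_{\BaseRing[F]} \h \otimes_{\BaseRing[F]} \b$.

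Finally, I would apply Corollary~\ref{cor:kunneth_appl} twice. Applied to the pair $(\h, \b)$, the module $\h$ is flat and trivially finitary, while $\b$ is finitary by hypothesis; both outer terms of the K\"unneth short exact sequence involve only $\lim^i \h$, all of which vanish, so $\lim^n(\h \otimes_{\BaseRing[F]} \b) = 0$ for every $n$. Applied to the pair $(\a, \h \otimes_{\BaseRing[F]} \b)$, one of the two is flat, $\a$ is finitary by hypothesis, and $\h \otimes_{\BaseRing[F]} \b$ is now trivially finitary; the same collapse yields $\lim^n \a\h\b = 0$. The main delicate point is tracking the flatness conditions through each step, so that both Lemma~\ref{lem:adequate_prod} and the K\"unneth corollary legitimately apply.
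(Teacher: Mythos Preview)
Your proof is correct and follows essentially the same route as the paper: factor $\a\h\b$ as $\a \otimes_{\BaseRing[F]} \h \otimes_{\BaseRing[F]} \b$ using freeness of $\h$, then apply the K\"unneth formula together with the vanishing of all $\lim^n \h$ coming from Proposition~\ref{prop:limits_of_letter}. Your explicit case split on which of $\a$, $\b$ is flat, and the two-step application of Corollary~\ref{cor:kunneth_appl}, just make precise what the paper leaves implicit.
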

\begin{proof}
Since \(\h\) is free \(\BaseRing[F]\)-module, one has \(\a\h\b \cong \a \otimes_{\BaseRing[F]} \h \otimes_{\BaseRing[F]} \b\).
The result follows from Corollary~\ref{cor:kunneth_appl} and Proposition~\ref{prop:limits_of_letter}.
\end{proof}

\section{\(\f\r_\infty\)-language}
\label{sec:fr_inf}
To the contrast of the case of \(\f\r\)-language it is now essential to use other rings than integers.
The reasons may be clearly seen from the sections~\ref{sec:shifting} and \ref{sec:quotients} which indicate that we need to localize \(\Z\) to calculate certain higher limits.

The free group algebra functor \(\BaseRing[-] \colon \Pres \to \Ring\) has a chain of functorial ideals
\[\BaseRing[-] \supset \f \supset \r_1 \supset \r_2 \supset \r_3 \supset \ldots\]
given by
\[\r_m(R \mono F \epi G) = (\gamma_m(R) - 1)\BaseRing[F].\]
Note that \(\r_1 = \r\) in the old notation.

\begin{lem}
For every \(m \ge 2\) one has \(\lim \, F/\gamma_m(R) = 0\).
\end{lem}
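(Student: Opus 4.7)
The plan is to show that every lift $w \in F$ of an element $\bar w \in \lim F/\gamma_m(R)$ in fact lies in $\gamma_m(R)$. I would first augment the presentation: form $\tilde F = F * \langle t \rangle$ with $t \mapsto 1 \in G$, set $\tilde R = \ker(\tilde F \to G)$, and note that the inclusion $\iota \colon F \mono \tilde F$ and the retraction $\rho \colon \tilde F \epi F$ killing $t$ are morphisms in $\Pres(G)$ with $\rho \iota = \id_F$; by naturality it then suffices to prove $w \in \gamma_m(\tilde R)$ and transport via $\rho$ back to $F$.

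Next, I would exploit the abundance of $\Pres(G)$-morphisms $F \to \tilde F$. For every tuple of integers $(n_i)$ indexed by a basis $\{x_i\}$ of $F$, the assignment $x_i \mapsto x_i t^{n_i}$ extends to a $\Pres(G)$-morphism $\phi\colon F \to \tilde F$, and naturality forces $\phi(w) w^{-1} \in \gamma_m(\tilde R) \subset [\tilde R, \tilde R]$. Using the canonical $\Z[G]$-decomposition $\tilde R_\ab \cong R_\ab \oplus \Z[G] \cdot [t]$, a Fox-calculus expansion identifies the image of $\phi(w) w^{-1}$ in the second summand as a $\Z[G]$-linear combination $\sum_i n_i\, \delta_i(w)\, [t]$, where $\delta_i(w) \in \Z[G]$ is (up to a group-element factor) the reduction mod $R$ of the Fox derivative $\partial w/\partial x_i$. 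Varying the $n_i$ forces each $\delta_i(w) = 0$, and the Fox--Lyndon identity then yields $w \in R$.

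To upgrade $w \in R$ to $w \in \gamma_m(R)$, I would iterate the augmentation with further auxiliary generators $t_1, \dots, t_{m-1}$, each mapping to $1 \in G$, and consider $\Pres(G)$-morphisms of the form $x_i \mapsto x_i c_i$ in which the $c_i$ are nested commutators of the $t_j$'s chosen to witness the class of $w$ in the successive quotients $\gamma_k(\tilde R)/\gamma_{k+1}(\tilde R) \cong \Lie^k(\tilde R_\ab)$ (Magnus--Witt, Fact~\ref{fact:magnus_witt}). A multilinear Fox-calculus argument at each level $k$ then pushes $w$ one step further into the lower central series, and after $m-1$ steps one arrives at $w \in \gamma_m(\tilde R)$. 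The main obstacle is this iteration: at level $k$ one must track the multilinear expansion of $\phi(w) w^{-1}$ modulo $\gamma_{k+1}(\tilde R)$ as a polynomial in the parameters of $\phi$ with values in $\Lie^k(\tilde R_\ab)$ and show that its coefficient functionals collectively detect any nontrivial class; while $k = 1$ is essentially Fox--Lyndon, the higher levels require careful bookkeeping of nested commutators and systematic use of Magnus--Witt to identify each graded piece with a free Lie power.
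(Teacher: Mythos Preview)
Your approach differs genuinely from the paper's, and with two corrections it does go through. First, the Fox-calculus step is stronger than you state: from $\delta_i(w) = 0$ for all $i$ one gets $w - 1 = \sum_i (\partial w / \partial x_i)(x_i - 1) \in \r\f$, and since $R_\ab \cong \r/\r\f$ this is exactly the condition $w \in \gamma_2(R)$, not merely $w \in R$. In particular the case $m = 2$ is finished in one step, with no iteration needed. Second, the nested-commutator move is a red herring: if $c_i$ lies in $\gamma_k$ of the auxiliary subgroup then $\phi(w) \equiv w$ modulo $\gamma_k(\tilde R)$ holds automatically, and invariance at level $k$ gives nothing. What does work is to reuse the \emph{same} morphisms $\phi \colon x_i \mapsto x_i t^{n_i}$ at every level. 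Knowing $w \in \gamma_k(R)$ and $m > k$, invariance yields $\Lie^k(\phi_*)([w]) = \Lie^k(\iota_*)([w])$ in $\Lie^k \tilde R_\ab$, where $\phi_* = \iota_* + \nu\cdot[t] \colon R_\ab \to R_\ab \oplus \Z[G]$; embedding into $T^k \tilde R_\ab$ and reading off the components with a single $[t]$-factor, the vanishing for each choice of $(n_i)$ amounts to $(\id^{\otimes(j-1)} \otimes \pi_l \otimes \id^{\otimes(k-j)})([w]) = 0$ for every coordinate projection $\pi_l$ of the Magnus embedding $R_\ab \hookrightarrow P$ and every position $j$, whence $[w] = 0$ in $\Lie^k R_\ab$.

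The paper's own proof is much shorter: for $m = 2$ it appeals to the Auslander--Lyndon theorem (faithfulness of the $G$-action on $R_\ab$), and for $m \ge 3$ it inducts along $\Lie^{m-1} R_\ab \hookrightarrow F/\gamma_m(R) \twoheadrightarrow F/\gamma_{m-1}(R)$ together with the known vanishing $\lim R_\ab^{\otimes(m-1)} = 0$. Your route avoids Auslander--Lyndon entirely and, once the iteration is set up correctly, is in effect a hands-on reproof of $\lim \Lie^k R_\ab = 0$ via the Magnus embedding; the trade-off is a longer combinatorial argument in place of the paper's two citations.
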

\begin{proof}
We prove the statement by induction over \(m\).

For \(m = 2\) consider a short exact sequence
\[R_\ab \mono F/\gamma_2(R) \epi F/R = G.\]
Since \(\lim^0\) is left exact, \(\lim \, R_\ab = 0\) by \cite[Lemma~4.14]{fr_codes_2017}, there is a monomorphism \(\lim \, F/\gamma_2(R) \mono F/R\). Since \(\Pres(G)\) is strongly connected, the limit could be computed as the invariants:
\[\lim \, F/\gamma_2(R) = \inv \, F/\gamma_2(R) = H/\gamma_2(R),\]
where \(H\) is the maximal functorial subgroup of \(F\), such that the quotient \(H/\gamma_2(R)\) is constant. By functorial properties of the limit, \(H\) is normal in \(F\). The monomorphism \(H/\gamma_2(R) \mono F/R\) implies that \(R \cap H = \gamma_2(R)\). But then \(H/(R \cap H) \subset G\) acts trivially on the relation module \(R_\ab\). The action of \(G\) on \(R_\ab\) is faithful by \cite[Theorem~1]{auslander_commutator_1955}, which implies that \(H = R \cap H = \gamma_2(R)\).

For \(m \ge 3\) consider a short exact sequence
\[\Lie^{m-1} R_\ab \mono F/\gamma_m(R) \epi F/\gamma_{m-1}(R).\]
One has \(\lim \, \Lie^{m-1} R_\ab \mono \lim \, R_\ab^{\otimes m-1} = 0\) by \cite[Lemma~4.14]{fr_codes_2017}. The result follows from the left exactness of \(\lim^0\) and the induction hypothesis.
\end{proof}
Thus, \(\r_m\) are vacuous letters for \(m \ge 2\) in the sense of Definition~\ref{def:vacuous_letter}. Hence,
\[\lim^n \, \r_m = \begin{cases}
\g, & (n,m) = (1,1), \\
0, & \text{otherwise}.
\end{cases}\]
Moreover, for \(m \ge 2\) and every finitary functorial ideals \(\a, \b \subset k[F]\) such that one of them is flat, the limits \(\lim^*\, \a\r_m\b\) are zero by Proposition~\ref{prop:limits_vacuous}.

Note that for every \(m \ge 2\) from~\eqref{eq:gruenberg_resolution} one has
\begin{align}
H_{2k}(F/\gamma_m(R)) &= \frac{\r_m^k \cap \f\r_m^{k-1}\f}{\r_m^k \f + \f\r_m^k}, \; k \ge 1, \nonumber \\
\label{eq:homology_via_frinf}
H_{2k+1}(F/\gamma_m(R)) &= \frac{\f\r_m^k \cap \r_m^k \f}{\r_m^{k+1} + \f\r_m^k\f}, \; k \ge 0.
\end{align}
Clearly, the numerators have trivial \(\lim^1\). Therefore,
\begin{align*}
&\lim^1\, \r_m^k \f + \f\r_m^k = \lim\,H_{2k}(F/\gamma_m(R)), \; k \ge 1, \\
&\lim^1\, \r_m^{k+1} + \f\r_m^k\f = \lim\, H_{2k+1}(F/\gamma_m(R)), \; k \ge 0.
\end{align*}

This allows us to write down the computations from Section~\ref{sec:quotients} in the following compact form.
\begin{table}[H]
\resizebox{\textwidth}{!}{%
\begin{tabular}{|c|c|c|c|}
\hline
\(G\)                                                                                                      & \(\a \subset \BaseRing[F]\)                         & \(\BaseRing\)        & \(\lim^1 \, \a\)                                                  \\ \hline
\(p\)-torsionless for prime \(p\)                                                                          & \(\r_p\f + \f\r_p\)                     & \(\Z\)       & \(H_4(G; \Z/p)\)                                                             \\ \hline
\(2\)-torsionless                                                                                          & \(\r_4\f + \f\r_4\)                     & \(\Z\)       & \(H_6(G; \Z/2)\)                                                             \\ \hline
\((2n)!\)-torsionless for \(n \ge 1\)                                                                      & \(\s^n \f + \f \s^n\)                   & \(\Z[1/2]\)  & \(\bigoplus\limits_{\text{odd}\, p\,\mid\,n} f^{(p)}_{2n} H_{2n}(G; \Z/p)\)     \\ \hline
\((2n+1)!\)-torsionless for \(n \ge 1\)                                                                    & \(\s^{n+1} + \f\s^n\f\)                 & \(\Z[1/2]\)  & \(\bigoplus\limits_{\text{odd}\, p \,\mid\,n} f^{(p)}_{2n+1} H_{2n+1}(G; \Z/p)\) \\ \hline
\begin{tabular}[c]{@{}c@{}}\((rp)!\)-torsionless\\ for odd prime \(p\) and even \(0 < r < p\)\end{tabular} & \(\s^{rp/2} \f + \f\s^{rp/2}\)          & \(\Z_{(p)}\) & \(H_{r(p+2)}(G; \Z/p)\)                                                      \\ \hline
\begin{tabular}[c]{@{}c@{}}\((rp)!\)-torsionless\\ for odd prime \(p\) and odd \(0 < r < p\)\end{tabular}  & \(\s^{(rp+1)/2}\f + \f\s^{(rp-1)/2}\f\) & \(\Z_{(p)}\) & \(H_{r(p+2)}(G; \Z/p)\)                                                      \\ \hline
\begin{tabular}[c]{@{}c@{}}\(p!\)-torsionless\\ for odd prime \(p\)\end{tabular}                           & \(\s^{(p^2+1)/2} + \f\s^{(p^2-1)/2}\f\) & \(\Z_{(p)}\) & \(H_{p^2+2}(G;\Z/p) \oplus H_{p^2+2p}(G;\Z/p)\)                              \\ \hline
\end{tabular}%
}
\end{table}
\begin{itemize}
\item Rows (1) and (2) follow from Corollary~\ref{cor:lim_h2_lie}.
\item Rows (3) and (4) follow from Proposition~\ref{prop:hn_localized_away_2_kuzmin}.
\item Rows (5), (6) and (7) follow from rows (3) and (4). Alternatively, see Proposition~\ref{prop:lim_hrp}.
\end{itemize}

Note that if \(\a\) and \(\b\) have vanishing limits, then \(\lim^i\, \a \cap \b = \lim^{i-1}\,\a + \b\) for every \(i \ge 1\), because of the short exact sequence \(\a \cap \b \mono \a \oplus \b \epi \a + \b\).
Hence, one has
\begin{align}
&\lim^i \, \r_m^k \cap \f\r_m^{k-1}\f = \lim^{i-1} \, \r_m^k + \f\r_m^{k-1}\f, \quad i \ge 1,
\nonumber \\
\label{eq:shift_gruenberg_numerators}
&\lim^i \, \f\r_m^k \cap \r_m^k \f = \lim^{i-1} \, \f\r_m^k + \r_m^k\f, \quad i \ge 1.
\end{align}
The long exact sequence of limits for \eqref{eq:homology_via_frinf} yield the following two sequences:
\begin{align*}
&\lim^1\, H_{2k}(F/\gamma_m(R)) \mono \lim^2 \, \r_m^k \f + \f \r_m^k \to \lim^1 \, \r_m^k + \f\r_m^{k-1}\f \\
\to &\lim^2\, H_{2k}(F/\gamma_m(R)) \to \lim^3 \, \r_m^k \f + \f \r_m^k \to \lim^2 \, \r_m^k + \f\r_m^{k-1}\f \to \cdots
\end{align*}
and 
\begin{align*}
&\lim^1\, H_{2k+1}(F/\gamma_m(R)) \mono \lim^2 \, \r_m^{k+1}+ \f \r_m^k\f \to \lim^1 \, \f \r_m^k + \r_m^k\f \\
\to &\lim^2\, H_{2k+1}(F/\gamma_m(R)) \to \lim^3 \, \r_m^{k+1}+ \f \r_m^k\f \to \lim^2 \, \f \r_m^k + \r_m^k\f \to \cdots.
\end{align*}
Even for \(m = 2\) the full computation of \(\lim^2\) for Gruenberg ideals seems a hard problem, but some formulas with localized coefficients can be obtained.
\begin{prop}
Let \(p\) be an odd prime, and \(0 < r < p\) be an integer. Let \(G\) be a group without \((rp)!\)-torsion. Then there are the following formulas relating functorial ideals \(\s = (\gamma_2(R)-1)\Z_{(p)}[F]\) and \(\f = (F-1)\Z_{(p)}[F]\).
\begin{enumerate}
\item If \(r\) is even, then
\[\lim^2_{\Z_{(p)}} \, \s^{rp/2}\f + \f \s^{rp/2} = H_{r(p+2)-1}(G; \Z/p).\]
\item If \(r\) is odd, then
\[\lim^2_{\Z_{(p)}} \,\s^{(rp+1)/2}\f +\f\s^{(rp-1)/2}\f = H_{r(p+2)-1}(G; \Z/p).\]
\end{enumerate}
\end{prop}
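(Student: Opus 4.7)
The plan is to feed the ideals appearing in the statement into the two long exact sequences displayed just above the proposition, namely
\begin{align*}
&\lim{}^1 H_{2k}(F/\gamma_2(R)) \mono \lim{}^2 (\s^k\f + \f\s^k) \to \lim{}^1 (\s^k + \f\s^{k-1}\f) \to \cdots, \\
&\lim{}^1 H_{2k+1}(F/\gamma_2(R)) \mono \lim{}^2 (\s^{k+1} + \f\s^k\f) \to \lim{}^1 (\f\s^k + \s^k\f) \to \cdots,
\end{align*}
with all limits taken over $\Z_{(p)}$. For part (1) I would specialize to $k = rp/2$ (legal since $r$ is even), so $2k = rp$; for part (2) I would take $k = (rp-1)/2$ (legal since $r$ and $p$ are both odd), so $2k+1 = rp$. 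In both configurations the leftmost term becomes $\lim^1 H_{rp}(F/\gamma_2(R)) \otimes \Z_{(p)}$, which by Proposition~\ref{prop:lim_hrp}~(1) is exactly $H_{r(p+2)-1}(G;\Z/p)$.

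The key step is to show that the third term of each sequence vanishes, so that $\lim^1 H_{rp}(F/\gamma_2(R)) \otimes \Z_{(p)}$ maps isomorphically onto the desired $\lim^2$. Unpacking the shift identity \eqref{eq:shift_gruenberg_numerators} together with the identifications $\lim^1 (\r_m^j\f + \f\r_m^j) = \lim H_{2j}(F/\gamma_m(R))$ and $\lim^1 (\r_m^{j+1} + \f\r_m^j\f) = \lim H_{2j+1}(F/\gamma_m(R))$, a routine bookkeeping of indices shows that the third term, in both the even and odd subcases, is isomorphic to $\lim H_{rp-1}(F/\gamma_2(R)) \otimes \Z_{(p)}$. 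Thus it suffices to kill this group.

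For the vanishing I would invoke Proposition~\ref{prop:hn_localized_away_2_kuzmin} at $n = rp-1$ (the required $(rp-1)!$-torsionlessness is implied by the $(rp)!$-torsionlessness of $G$), which expresses the relevant limit over $\Z[1/2]$ as a direct sum of $f^{(q)}_{rp-1} H_{rp-1}(G;\Z/q)$ over odd primes $q$. Further localizing at the odd prime $p$ picks out only the summand at $q = p$; but $rp-1 \equiv p-1 \pmod p$, and since $p \ge 3$ one has $p-1 \not\equiv 0,1 \pmod p$, so $f^{(p)}_{rp-1} = 0$ directly from the recursive definition of the Kuz'min polynomial. Hence $\lim H_{rp-1}(F/\gamma_2(R)) \otimes \Z_{(p)} = 0$, the exact sequence collapses to the desired isomorphism, and we are done.

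I do not anticipate a serious obstacle here: the proof is essentially a bookkeeping exercise chaining together results already established in Sections~3--5. The only point requiring a bit of care is verifying that the obstructing terms in the two parity subcases both collapse to the same $H_{rp-1}$ group; once that unification is recorded, the Kuz'min-polynomial input closes everything uniformly.
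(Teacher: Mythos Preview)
Your proposal is correct and follows the same route as the paper's one-line proof (``the above exact sequences localized at $p$ and Proposition~\ref{prop:lim_hrp}~(1)''); you simply make explicit the step the paper suppresses, namely that the third term in each sequence is $\lim H_{rp-1}(F/\gamma_2(R))\otimes\Z_{(p)}$ and that it vanishes. One cosmetic remark: the vanishing is even more immediate than your Kuz'min-polynomial computation, since the sum in Proposition~\ref{prop:hn_localized_away_2_kuzmin} runs only over odd primes $q$ \emph{dividing} $n=rp-1$, and $p\nmid rp-1$, so after localizing at $p$ the sum is already empty.
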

\begin{proof}
This follows from the above exact sequences localized at \(p\) and Proposition~\ref{prop:lim_hrp}~(1).
\end{proof}

To gain a deeper understanding of codes in the \(\f\r_\infty\)-language, it is essential to conduct computations over \(\Q\). The following proposition provides the calculation for all higher limits of Gruenberg ideals involving \(\s = \r_2\).
\begin{prop}
Let \(G\) be a torsionless group. One has
\[\lim^i\, \s^k \f + \f \s^k \otimes \Q = \begin{cases}
\bigoplus\limits_{2 \le j \le 2k} S^j_\Q(\g)_G, & i = 2k + 1, \\
0, & i \ne 2k+1,
\end{cases}\]
and
\[\lim^i\, \s^k + \f \s^{k-1} \f \otimes \Q = \begin{cases}
\bigoplus\limits_{2 \le j < 2k} S^j_\Q(\g)_G, & i = 2k, \\
0, & i \ne 2k,
\end{cases}\]
\end{prop}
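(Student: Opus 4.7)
The plan is to compute $\lim^\smallbullet A_k \otimes \Q$ and $\lim^\smallbullet B_k \otimes \Q$ in tandem by induction on $k \ge 1$, where $A_k = \s^k\f + \f\s^k$ and $B_k = \s^k + \f\s^{k-1}\f$. The key ingredients are: (a) the Gruenberg-type descriptions of $H_n(F/\gamma_2(R))$ from~\eqref{eq:homology_via_frinf}, rearranged as short exact sequences
\[A_k \mono \s^k \cap \f\s^{k-1}\f \epi H_{2k}(F/\gamma_2(R)), \quad B_{k+1} \mono \f\s^k \cap \s^k\f \epi H_{2k+1}(F/\gamma_2(R));\]
(b) the shift identities~\eqref{eq:shift_gruenberg_numerators}, which give $\lim^i(\s^k \cap \f\s^{k-1}\f) = \lim^{i-1} B_k$ and $\lim^i(\f\s^k \cap \s^k\f) = \lim^{i-1} A_k$ for $i \ge 1$; and (c) Proposition~\ref{prop:torsionless_group_rationally}, which for $n \ge 2$ and a torsionless $G$ places $\lim^i H_n(F/\gamma_2(R)) \otimes \Q$ in a single degree $i = n$ with value $S^n_\Q(\g)_G$.

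The base case $k = 1$ rests on the standard commutator-augmentation identity $[R,R] - 1 \subset \r^2$, which implies $\s \subset \r^2 \subset \f^2$, so that $B_1 = \s + \f^2 = \f^2$ and $\s \cap \f^2 = \s$. Now $\lim^\smallbullet \s = 0$ by Proposition~\ref{prop:limits_of_letter} (since $\s$ is the Hopf ideal attached to $\gamma_2(R) \triangleleft F$ and $\lim F/\gamma_2(R) = 0$), while $\lim^\smallbullet \f^2 = 0$ by Proposition~\ref{prop:limits_vacuous}. Hence $\lim^\smallbullet B_1 = 0$, and the short exact sequence $A_1 \mono \s \epi H_2(F/\gamma_2(R))$ collapses to $\lim^i A_1 \otimes \Q = \lim^{i-1} H_2(F/\gamma_2(R)) \otimes \Q$, which is $S^2_\Q(\g)_G$ at $i = 3$ and zero elsewhere.

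For the inductive step, assume both formulas hold at level $k \ge 1$. Applying $\lim^\smallbullet$ to the second sequence in (a) and invoking (b) yields a long exact sequence
\[\cdots \to \lim^{i-2} A_k \to \lim^{i-1} H_{2k+1}(F/\gamma_2(R)) \to \lim^i B_{k+1} \to \lim^{i-1} A_k \to \lim^i H_{2k+1}(F/\gamma_2(R)) \to \cdots.\]
Tensoring with $\Q$, both $\lim^\smallbullet A_k$ and $\lim^\smallbullet H_{2k+1}(F/\gamma_2(R))$ are concentrated in degree $2k+1$. A direct chase then shows $\lim^i B_{k+1} \otimes \Q = 0$ for $i \ne 2k+2$; at $i = 2k+2$ one obtains the extension
\[0 \to S^{2k+1}_\Q(\g)_G \to \lim^{2k+2} B_{k+1} \otimes \Q \to \bigoplus_{j=2}^{2k} S^j_\Q(\g)_G \to 0\]
of $\Q$-vector spaces, which splits to give $\bigoplus_{j=2}^{2k+1} S^j_\Q(\g)_G$. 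The analogous chase for the first sequence in (a), using the freshly computed $\lim^\smallbullet B_{k+1}$ and the value of $\lim^\smallbullet H_{2k+2}(F/\gamma_2(R))$, places the nontrivial extension at $i = 2k+3$ and produces the formula for $A_{k+1}$.

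The main technical input is the commutator-augmentation inclusion $\s \subset \f^2$ that disposes of the base case cleanly; once it is available, the proof becomes a bookkeeping exercise. The only subtle point to verify at each inductive step is that the long exact sequence contributes precisely one nontrivial extension at the predicted degree, which is automatic since the single-degree concentration of all the building blocks forces every other connecting map to vanish for degree reasons.
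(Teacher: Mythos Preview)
Your proof is correct and follows essentially the same route as the paper: both arguments feed the Gruenberg short exact sequences~\eqref{eq:homology_via_frinf}, the shift identities~\eqref{eq:shift_gruenberg_numerators}, and Proposition~\ref{prop:torsionless_group_rationally} into a recursion linking the limits of $\s^k\f+\f\s^k$ and $\s^k+\f\s^{k-1}\f$, then split the resulting rational short exact sequences.

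The organizational differences are minor but worth recording. The paper extracts five-term segments at the two critical degrees, labels the four relevant limit groups $A_k, B_k, A_k', B_k'$, and kills the primed terms via the descending chain $\cdots \mono A_2' \mono B_2' \mono A_1' \mono B_1'$ together with $B_1' = \lim^1(\s+\f^2)\otimes\Q = \lim\, F_\ab \otimes \Q = 0$. You instead run a full induction on $k$ over all degrees simultaneously, which makes the vanishing away from the concentrated degree completely explicit rather than implicit. Your base case is also handled differently and somewhat more directly: the observation $\s \subset \r^2 \subset \f^2$ gives $B_1 = \f^2$ outright, so $\lim^\smallbullet B_1 = 0$ on the nose, whereas the paper reaches only $B_1' = 0$ via $H_1(F/\gamma_2(R)) \cong F_\ab$. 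Both treatments are valid; yours is slightly cleaner bookkeeping.
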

\begin{proof}
The formulas \eqref{eq:homology_via_frinf} and \eqref{eq:shift_gruenberg_numerators} specialize in the case \(m = 2\) to the following:
\begin{align*}
H_{2k}(F/\gamma_2(R)) &= \frac{\s^k \cap \f\s^{k-1}\f}{\s^k \f + \f\s^k}, \; k \ge 1, \\
H_{2k+1}(F/\gamma_2(R)) &= \frac{\f\s^k \cap \s^k \f}{\s^{k+1} + \f\s^k\f}, \; k \ge 0,
\end{align*}
and
\begin{align*}
&\lim^i \, \s^k \cap \f\s^{k-1}\f = \lim^{i-1} \, \s^k + \f\s^{k-1}\f, \quad i \ge 1, \\
&\lim^i \, \f\s^k \cap \s^k \f = \lim^{i-1} \, \f\s^k + \s^k\f, \quad i \ge 1.
\end{align*}
Also, note that by Proposition~\ref{prop:torsionless_group_rationally} for \(k \ge 2\) one has
\[\lim^i \, H_k(F/\gamma_2(R)) \otimes \Q = \begin{cases}
S^k_\Q(\g)_G, & i = k, \\
0, & i \ne k.
\end{cases}\]
Therefore, there are exact sequences
\begin{align*}
&\underbrace{\lim^{2k} \, \s^k \f + \f \s^k \otimes \Q}_{A_k'} \mono \underbrace{\lim^{2k-1} \, \s^k + \f\s^{k-1}\f \otimes \Q}_{B_k'} \\
\to &S^{2k}_\Q(\g)_G \to \underbrace{\lim^{2k+1} \, \s^k \f + \f\s^k \otimes \Q}_{A_k} \epi \underbrace{\lim^{2k} \s^k + \f\s^{2k-1}\f \otimes \Q}_{B_k}, \quad k \ge 1, \\
& \underbrace{\lim^{2k+1} \, \s^{k+1} + \f\s^k \f \otimes \Q}_{B_{k+1}'} \mono \underbrace{\lim^{2k}\, \f\s^k + \s^k\f \otimes \Q}_{A_k'} \\
\to & S^{2k+1}_\Q(\g)_G \to \underbrace{\lim^{2k+2}\, \s^{k+1} + \f\s^k \f \otimes \Q}_{B_{k+1}} \epi \underbrace{\lim^{2k+1}\, \f\s^k + \s^k\f \otimes \Q}_{A_k}, \quad k \ge 0.
\end{align*}
Since there is a chain of inclusions
\[\cdots \mono B_3' \mono A_2' \mono B_2' \mono A_1' \mono B_1',\]
and
\[B_1' = \lim^1\, \s + \f\f \otimes \Q = \lim \, H_1(F/\gamma_2(R)) \otimes \Q = \lim \, F_\ab \otimes \Q = 0,\]
we have \(A_k' = B_k' = 0\) for all \(k\).
Therefore, there are short exact sequences
\begin{align*}
&S^{2k}_\Q(\g)_G \mono \underbrace{\lim^{2k+1} \, \s^k \f + \f\s^k \otimes \Q}_{A_k} \epi \underbrace{\lim^{2k} \s^k + \f\s^{2k-1}\f \otimes \Q}_{B_k}, \quad k \ge 1, \\
&S^{2k+1}_\Q(\g)_G \mono \underbrace{\lim^{2k+2}\, \s^{k+1} + \f\s^k \f \otimes \Q}_{B_{k+1}} \epi \underbrace{\lim^{2k+1} \, \s^k \f + \f\s^k \otimes \Q}_{A_k}, \quad k \ge 0.
\end{align*}
Since over \(\Q\) every exact sequence splits, we have
\begin{align*}
A_k &= S^{2k}_\Q(\g)_G \oplus S^{2k-1}_\Q(\g)_G \oplus \cdots \oplus S^2_\Q(\g)_G, \quad k \ge 0, \\
B_k &= S^{2k-1}_\Q(\g)_G \oplus S^{2k-2}_\Q(\g)_G \oplus \cdots \oplus S^2_\Q(\g)_G, \quad k \ge 1,
\end{align*}
and the result follows.
\end{proof}
\appendix
\renewcommand{\thesection}{\Alph{section}}
\section{Proof of technical results}

\subsection{Proof of Proposition~\ref{prop:s2_p_monoadditive}}
\label{app:1}

Let \(n = 0\). Then there are two exact sequences
\begin{align*}
& S^2(P) \mono \Gamma^2(P) \epi P \otimes \Z/2, \\
& \Gamma^2(P) \mono P \otimes P \epi S^2(P).
\end{align*}
Since \(P\) and \(S^2(P)\) are free \(\ZG\)-modules, the modules \(P \otimes M\) and \(S^2(P) \otimes M\) are free as well, and these sequences induce short exact sequences
\begin{align*}
& (S^2(P) \otimes M)_G \mono (\Gamma^2(P) \otimes M)_G \epi (P \otimes M \otimes \Z/2)_G, \\
& (\Gamma^2(P) \otimes M)_G \mono (P \otimes P \otimes M)_G \epi (S^2(P) \otimes M)_G.
\end{align*}
Hence,
\[\lim \, (S^2(P) \otimes M)_G \mono \lim \, (\Gamma^2(P) \otimes M)_G \mono \lim \, (P \otimes P \otimes M)_G = 0,\]
since the last functor is split monoadditive by Lemma~\ref{lem:h0_m_o_p_monoadditive} (1).

If \(n = 1\), then the result holds by Lemma~\ref{lem:h0_m_o_p_monoadditive} (3).

Now, assume that \(n \ge 2\).
Tensor \eqref{eq:ex2p_flat} over \(\Z\) with \(M\), and then tensor the result over \(\ZG\) with \(\Ex^n R_\ab\), to obtain the following exact sequence:
\[(\Ex^n R_\ab \otimes \Ex^2(P) \otimes M)_G \mono (\Ex^n R_\ab \otimes P \otimes P \otimes M)_G \epi (\Ex^n R_\ab \otimes S^2(P) \otimes M)_G.\]

The middle has trivial limits by Lemma~\ref{lem:h0_m_o_p_monoadditive} (1), and
\[\lim \, (\Ex^n\, R_\ab \otimes S^2(P) \otimes M)_G = \lim^1 \, (\Ex^n\, R_\ab \otimes \Ex^2(P) \otimes M)_G.\]

From \eqref{eq:ex2p_flat} we can see that \(\Ex^2(P)\) is flat \(\ZG\)-module. Hence, we can tensor the Kozsul sequence for the free abelian group \(S^n R_\ab\) with \(\Ex^2(P) \otimes M\) over \(\ZG\) to obtain the following exact sequence:
\begin{align*}
(\Ex^n R_\ab \otimes \Ex^2(P)& \otimes M)_G \mono (\Ex^{n-1} R_\ab \otimes R_\ab \otimes \Ex^2(P) \otimes M)_G \to \cdots \\
&\epi (S^n R_\ab \otimes \Ex^2(P) \otimes M)_G.
\end{align*}
Hence, it is enough to show that
\[\lim^1 \, (\Ex^{n-1} R_\ab \otimes R_\ab \otimes \Ex^2(P) \otimes M)_G = 0\]
and
\[\lim\, (\Ex^{n-2} R_\ab \otimes S^2(R_\ab) \otimes \Ex^2(P) \otimes M)_G = 0.\]

The second identitity follows from the Lemma~\ref{lem:h0_m_o_p_monoadditive} (2). 

To show the first one, we tensor \eqref{eq:ex2p_flat} with \(M\) over \(\Z\), and with \(\Ex^{n-1} R_\ab \otimes R_\ab\) over \(\ZG\), and obtain the short exact sequence
\begin{align*}
(\Ex^{n-1} R_\ab \otimes R_\ab& \otimes \Ex^2(P) \otimes M)_G \mono (\Ex^{n-1} R_\ab \otimes R_\ab \otimes P \otimes P \otimes M)_G \\
&\epi (\Ex^{n-1} R_\ab \otimes R_\ab \otimes S^2(P) \otimes M)_G.
\end{align*}
Since the middle term is split monoadditive by Lemma~\ref{lem:h0_m_o_p_monoadditive} (1), it is enough to show that
\[\lim\, (\Ex^{n-1} R_\ab \otimes R_\ab \otimes S^2(P) \otimes M)_G = 0.\]
But this holds by Lemma~\ref{lem:h0_m_o_p_monoadditive} (3).
\qed

\subsection{Proof of Proposition~\ref{prop:lie_torsion}}
\label{app:2}

The following perspective to Lie powers was shown to us by Artem Semidetnov.
For \(n \ge 1\) and a free abelian group \(A\) the tensor power
\[T^n A = \underbrace{A \otimes \ldots \otimes A}_n\]
is a module over \(\Z[S_n]\) in a natural way. If \(\a\) is some left ideal of \(\Z[S_n]\), then \(T^n A \cdot \a\) is a left \(\Z[S_n]\)-submodule of \(T^n A\).

\begin{ex}
Let
\[x = (1 - (2,1)) \cdot (1 - (3,2,1)) \cdot \ldots \cdot (1 - (n, n-1, \ldots, 1)),\]
and \(\a = \Z[S_n] x\). Then \(T^n A \cdot \a = T^n A \cdot x\) is isomorphic to the \(n\)-th Lie power \(\Lie^n A\).
\end{ex}

If \(\a\) and \(\b\) are two ideals of \(\Z[S_n]\), then every homomorphism of left \(\Z[S_n]\)-modules \(\a \to \b\) gives a natural transformation \(T^n A \cdot \a \to T^n A \cdot \b\) in the functor category \(\Fun(\Ab_\mathrm{free}, \Mod(\Z[S_n]))\).

Let \(x, y\) be any two elements of the group ring \(\Z[S_n]\). Then there are two homomorphisms of left \(\Z[S_n]\)-modules: the injection
\(\Z[S_n] yx \mono \Z[S_n] x\) and the surjection \(\Z[S_n] x \epi \Z[S_n] xy\).

\begin{ex}
Consider two elements \(x, y \in \Z[S_n]\) given by
\begin{align*}
x &= (1 - (2,1)) \cdot \ldots \cdot (1 - (n-1,\ldots, 1)), \\
y &= (1 - (n,\ldots,1)).
\end{align*}
Then the homomorphism \(\Z[S_n] x \epi \Z[S_n] xy\) corresponds to the natural epimorphism
\(\Lie^{n-1} \otimes \mathrm{Id} \epi \Lie^n\) given by the formula
\[[x_1, \ldots, x_{n-1}] \otimes x \mapsto [x_1, \ldots, x_{n-1}, x].\]
This map appears as the component of the last map in the Chevalley-Eilenberg complex 
\[\Ex^n \mono \ldots \to \bigoplus_{\substack{k_1 + \ldots + k_n = n-1 \\ k_1 + 2\cdot k_2 + \ldots + n \cdot k_n = n}} \Ex^{k_1} \Lie^1 \otimes \cdots \otimes \Ex^{k_n} \Lie^n \epi \Lie^n.\]
\end{ex}

\begin{lem}\label{lem:semidetnov_splitting}
For every elements \(x, y \in \Z[S_n]\), after tensoring with \(\Q\), the natural epimorphism \((T^n A \cdot x) \otimes \Q \epi (T^n A \cdot xy) \otimes \Q\) splits, and the natural monomorphism \((T^n A \cdot yx) \otimes \Q \mono (T^n A \cdot x) \otimes \Q\) splits.
\end{lem}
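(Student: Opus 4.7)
The plan is to reduce the lemma to Maschke's theorem for $\Q[S_n]$ and then transport the resulting splittings through a tensor functor.

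Since $n! = |S_n|$ is invertible in $\Q$, Maschke's theorem ensures $\Q[S_n]$ is semisimple, so every short exact sequence of left $\Q[S_n]$-modules splits. I would begin by applying this to the inclusion of left ideals $\iota\colon \Q[S_n]\,yx \mono \Q[S_n]\,x$ and to the surjection $\pi\colon \Q[S_n]\,x \epi \Q[S_n]\,xy$ given by $\alpha x \mapsto \alpha xy$. This produces left $\Q[S_n]$-linear maps $\rho\colon \Q[S_n]\,x \to \Q[S_n]\,yx$ retracting $\iota$ and $\sigma\colon \Q[S_n]\,xy \to \Q[S_n]\,x$ sectioning $\pi$.

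Next, I would pass from ideals of $\Q[S_n]$ to submodules of $T^n A \otimes \Q$ via the additive functor
\[
F_A = (T^n A \otimes \Q) \otimes_{\Q[S_n]} (-) \colon \Mod(\Q[S_n]) \to \Mod(\Q),
\]
which is natural in the free abelian group $A$. For every left ideal $\mathbf{a} \subset \Q[S_n]$, semisimplicity realises $\mathbf{a}$ as a direct summand of $\Q[S_n]$, and applying $F_A$ to such a decomposition identifies $F_A(\mathbf{a})$ with the corresponding direct summand of $F_A(\Q[S_n]) = T^n A \otimes \Q$. This summand is precisely $(T^n A \cdot \mathbf{a}) \otimes \Q$. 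Specialising to $\mathbf{a} \in \{\Q[S_n]\,yx,\; \Q[S_n]\,x,\; \Q[S_n]\,xy\}$ then identifies the two natural maps appearing in the statement with $F_A(\iota)$ and $F_A(\pi)$ respectively.

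Since additive functors carry splittings to splittings, $F_A(\rho)$ and $F_A(\sigma)$ yield the desired retraction of $(T^n A \cdot yx) \otimes \Q \mono (T^n A \cdot x) \otimes \Q$ and section of $(T^n A \cdot x) \otimes \Q \epi (T^n A \cdot xy) \otimes \Q$, with naturality in $A$ built in. The one step requiring care is the identification $F_A(\mathbf{a}) \cong (T^n A \cdot \mathbf{a}) \otimes \Q$: writing $\Q[S_n] = \mathbf{a} \oplus \mathbf{c}$ as left $\Q[S_n]$-modules and applying $F_A$ gives a decomposition $T^n A \otimes \Q = F_A(\mathbf{a}) \oplus F_A(\mathbf{c})$, and one must verify that the first summand matches $(T^n A \cdot \mathbf{a}) \otimes \Q$ under the evaluation map. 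This is the conceptual heart of the argument, using precisely the semisimplicity invoked above; everything else is a formal consequence of functoriality.
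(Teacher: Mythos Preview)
Your proposal is correct and follows essentially the same route as the paper: both reduce to producing a \(\Q[S_n]\)-linear splitting of \(\Q[S_n]x \epi \Q[S_n]xy\) (resp.\ a retraction of \(\Q[S_n]yx \mono \Q[S_n]x\)) and then transport it to \((T^nA\otimes\Q)\cdot(-)\) via the functor induced by the right \(\Q[S_n]\)-action on \(T^nA\otimes\Q\). The only cosmetic difference is that the paper re-proves Maschke by the explicit averaging \(\widehat g(u)=\tfrac{1}{n!}\sum_{\sigma}\sigma\,g(\sigma^{-1}u)\), whereas you cite semisimplicity of \(\Q[S_n]\) directly; your identification \(F_A(\mathbf a)\cong (T^nA\cdot\mathbf a)\otimes\Q\) is exactly what the paper packages in the sentence ``every homomorphism of left \(\Z[S_n]\)-modules \(\mathbf a\to\mathbf b\) gives a natural transformation \(T^nA\cdot\mathbf a\to T^nA\cdot\mathbf b\)''.
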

\begin{proof}
We prove only the `epi' part. The idea is the same as in the proof of Maschke's theorem.

Note that for every \(t \in \Z[S_n]\) there is a natural isomorphism \[(T^n A \cdot t) \otimes \Q = (T^n A \otimes \Q) \cdot t,\] where `\(t\)' on the right is considered as an element of rational group ring \(\Q[S_n]\).
Hence, it is enough to construct the splitting of \(\Q[S_n]\)-modules for \(f \colon \Q[S_n]x \epi \Q[S_n]xy\). Since this is epimorphism of \(\Q\)-modules, there exists a injective homomorphism of vector spaces \(g \colon \Q[S_n]xy \mono \Q[S_n]x\) such that \(f \circ g = \id\). Consider a homogenization
\[\widehat g \colon \Q[S_n]xy \to \Q[S_n]x, \quad \widehat g(u) = \tfrac{1}{n!} \sum_{\sigma \in S_n} \sigma \cdot g(\sigma^{-1} u).\]
It is easy to see that \(f \circ \widehat g = \id\) and \(\widehat g\) is a homomorphism of \(\Q[S_n]\)-modules.
\end{proof}

Now we are ready to prove Proposition~\ref{prop:lie_torsion}.
By Lemma~\ref{lem:lim_commutes_with_torsion} (2), it is enough to show that abelian groups \(\lim^i \, (\Lie^n R_\ab \otimes M)_G \otimes \Q\) are zero in the specified range of \(i\).

(1) Consider a composition
\[\Lie^n R_\ab \mono T^n R_\ab \simeq T^{n-1} R_\ab \otimes R_\ab \mono T^{n-1} R_\ab \otimes P.\]
Applying \(H_0(G; - \otimes M)\) to it yields a map \((\Lie^n R_\ab \otimes M)_G \to (T^{n-1} R_\ab \otimes P \otimes M)_G\). By \cite[Lemma~2.1 (i)]{stohr_homology_1993}, its kernel is torsion, and there is a monomorphism
\[(\Lie^n R_\ab \otimes M)_G \otimes \Q \mono (R_\ab \mono T^{n-1} R_\ab \otimes P \otimes M)_G \otimes \Q.\]
The result follows since \(\lim^0\) is left exact, and the right hand side is split monoadditive by Lemma~\ref{lem:h0_m_o_p_monoadditive} (1).

(2)
We prove this by induction.
This holds for \(n = 2\) by Proposition~\ref{prop:lim_i_ex_rel_modulo_torsion}, since \(\Lie^2 = \Ex^2\).

By Lemma~\ref{lem:semidetnov_splitting}, there is a split monomorphism \(\Lie^n \otimes \Q \mono \Lie^{n-1} \otimes \mathrm{Id} \otimes \Q\). It induces monomorphisms
\[\lim^i\, (\Lie^n R_\ab \otimes M)_G \otimes \Q \mono \lim^i\, (\Lie^{n-1} R_\ab \otimes R_\ab \otimes M)_G \otimes \Q, \quad i \ge 0.\]
Tensoring a short exact sequence of representations \eqref{eq:relation_module} with \(\Lie^{n-1} R_\ab \otimes M \otimes \Q\) over \(\Z[G]\) yields a short exact sequence
\begin{align*}
(\Lie^{n-1} R_\ab \otimes R_\ab &\otimes M)_G \otimes \Q \mono (\Lie^{n-1} R_\ab \otimes P \otimes M)_G \otimes \Q \\
&\epi (\Lie^{n-1} R_\ab \otimes \g \otimes M)_G \otimes \Q,
\end{align*}
since \(H_1(G; \Lie^{n-1} R_\ab \otimes \g \otimes M \otimes \Q)\) is zero by \cite[Theorem~4.1]{zerck_homology_1989}\footnote{The cited theorem states that \(H_1(G; \Lie^{n-1}R_\ab)\) is torsion, but this easily generalizes to \(H_1(G; \Lie^{n-1}R_\ab \otimes D)\) for any \(\Z\)-free \(\ZG\)-module \(D\), such as \(D = \g \otimes M\).}.
The middle term of this sequence is split monoadditive by Lemma~\ref{lem:h0_m_o_p_monoadditive} (1), and the result follows from the inductive hypothesis. 
\qed

\end{document}